
\documentclass[preprint,12pt]{elsarticle}


\usepackage{epstopdf}
\usepackage{amssymb,amsmath}
\usepackage{amsthm}
\usepackage[pdftex,
           pdfstartview=FitH,
           colorlinks=true,
           citecolor=blue]{hyperref}

\newtheorem{theorem}{Theorem}[section]
\newtheorem{lemma}{Lemma}[section]

\newtheorem{proposition}{Proposition}[section]
\newtheorem{definition}{Definition}[section]
\newtheorem{remark}{Remark}[section]
\newtheorem{example}{Example}[section]
\topmargin -0cm
\headheight 0in
\headsep 0in
\textheight 9.4in
\textwidth 6.4in
\hoffset -2.0cm

\journal{}

\begin{document}

\begin{frontmatter}



\cortext[cor1]{Corresponding author.}
\title{A second order dynamical system method for solving a maximal comonotone inclusion problem}

\author{Zengzhen Tan\fnref{scu}}
\ead{zengzhentan@foxmail.com}

\author{Rong Hu\fnref{cuit}}
\ead{ronghumath@aliyun.com}

\author{Yaping Fang\fnref{scu}\corref{cor1}}
\ead{ypfang@scu.edu.cn}
\address[scu]{Department of Mathematics, Sichuan University, Chengdu, Sichuan, P.R. China}
\address[cuit]{Department of Applied
Mathematics, Chengdu University of Information Technology, Chengdu, Sichuan, P.R. China}

\begin{abstract}
In this paper a second order dynamical system model is proposed for computing a zero of a maximal comonotone operator  in Hilbert spaces. Under mild conditions, we prove existence and uniqueness of a strong global solution of the proposed dynamical system. A proper tuning of the parameters can allow us to establish fast convergence properties of the trajectories generated by the dynamical system. The  weak convergence of the trajectory to a zero of the maximal comonotone operator is also proved. Furthermore, a discrete version of the dynamical system is considered and  convergence properties matching to  that of the dynamical system are established under a same framework. Finally, the validity of the proposed dynamical system and its discrete version is demonstrated by two numerical examples. 
\end{abstract}

\begin{keyword}
Second order dynamical system \sep Hessian driven \sep Maximal comonotone operator \sep Yosida regularization \sep Discretization.


\MSC[2010] 37N40 \sep 46N10\sep 49M30\sep 65K05 \sep 65K10

\end{keyword}

\end{frontmatter}


\section{Introduction}

This paper focuses on solving the following inclusion problem
\begin{equation}\label{problem}
\mbox{find} \quad x^*\in \mathcal{H}\quad \mbox{such that} \quad 0\in \mathcal{A}(x^*),
\end{equation}
where $\mathcal{A}:\mathcal{H}\rightarrow 2^{\mathcal{H}}$ is a point-to-set operator and $\mathcal{H}$ be a real Hilbert space. We denote the solution set of the problem $(\ref{problem})$ by $\mbox{zer}  \mathcal{A}:= {\mathcal{A}}^{-1}(0)$ and assume it to be nonempty. The inclusion problem $(\ref{problem})$ though looks simple, it covers many important applications in scientific fields such as image processing, computer vision, machine learning, signal processing, optimization, equilibrium theory, economics, game theory, partial differential equations, statistics, and so on (see, e.g., \cite{Combettes,Bottou,Lan,Tseng,Solodov,Rockafellar,BauschkeC,Malitsky,Bhatia,Jabbar,Lin}). A general method for solving $(\ref{problem})$ is the proximal point algorithm first suggested by Martinet \cite{Martinet} in the 1970s for solving variational problems and further generalized by Rockafellar \cite{Rockafellar} to get today's version. In the proximal point algorithm, iterates $\{x_n\}$, $n\ge 1$, are generated by the following rule:
\begin{equation}\label{PPA}
x_{n+1}=J_{\gamma_{n}}^{A}(x_n),	 \quad \{\gamma_{n}\}\subset(0,+\infty),
\end{equation}
where $A:\mathcal{H}\rightarrow 2^{\mathcal{H}}$ is a maximally monotone operator, $J_{\gamma}^{A}$ is the resolvent of the operator $A$ with index $\gamma>0$ and $Id$ is the identity operator. 

When $A$ is a maximal monotone operator, Rockafellar \cite{Rockafellar} proves the sequence $\{x_n\}$ generated by $(\ref{PPA})$ is weakly convergent globally to a zero of $A$ when zer$A\neq \emptyset$, and the sequence of the regularization parameters $\gamma_n$ has a positive lower bound. Br$\acute{e}$zis and Lions \cite{Brezis} weaken the latter assumption to merely require the sequence of squares of the regularization parameters to be nonsummable. A related result due to Eckstein and Bertsekas \cite{DR}(partially based on Gol'shtein and Tret'yakov \cite{Golshtein}) is the following relaxed proximal point algorithm  
$$x_{n+1}=(1-\xi_n)x_n+\xi_n J_{\gamma_{n}}^{A}(x_n),$$
where $\{\xi_n\}_{n=0}^{\infty}\subseteq (0,2)$ is a sequence of over-or under-relaxation factors. Weak convergence is proved in \cite{DR} for an inexact version of the relaxed proximal point algorithm under a standard summable errors condition. The accelerated proximal point method proposed by Kim \cite{Kim}, based on the performance estimation problem approach of Drori-Teboulle \cite{Drori}, writes for initial iterates $\{x_0, z_0, z_{-1}\}\subset \mathcal{H}$ and for $n\geq 0$,
\begin{equation*}
\left\{
\begin{array}{rcl}
&&x_{n+1}=J_{\mu}^{A}(z_n),\\
&&z_{n+1}=x_{n+1}+\frac{n}{n+2}(x_{n+1}-x_n)+\frac{n}{n+2}(z_{n-1}-x_n).
\end{array}
\right.
\end{equation*}	 
This yields the following convergence rate $\|A_{\mu}(x_n)\|=O(n^{-1})$ (see \cite [Theorem 4.1] {Kim}). However, the above algorithm does not establish the convergence of the iterates.
Maing$\acute{e}$ \cite{Mainge} introduces a corrected relaxed inertial proximal algorithm with constant relaxation factors (CRIPA-S) which enters the following framework of sequences $\{z_n, x_n\}\subset \mathcal{H}$ generated from starting points $\{z_{-1},x_{-1},x_0\}$ by
\begin{equation}\label{CRIPAS}
\left\{
\begin{array}{rcl}
&&z_{n}=x_n+(1-\frac{a_1}{bn+\bar{c}})(x_n-x_{n-1})+(1-\frac{a_2}{bn+\bar{c}})(z_{n-1}-x_n),\\
&&x_{n+1}=\frac{1}{1+k_0}z_n+\frac{k_0}{1+k_0}J_{\lambda(1+k_0)}^{A}(z_n),
\end{array}
\right.
\end{equation}	
where constants $\{k_0,b,\bar{c},a_1,a_2\}\subset (0,+\infty)$ satisfy $a_2>2b$, $a_1>b+a_2$ and $\bar{c}>\mbox{max}\{a_1,a_2\}$. This yields the  weak convergence to some element of zer$A$ and the following convergence rate $\|x_{n+1}-x_n\|=o(n^{-1})$ and $\|A_{\lambda}(x_n)\|=o(n^{-1})$. In recent decades, many generalizations and modifications of the proximal point algorithm have been considered by authors; see for example \cite{DaoT,DaoP,KohlenbachU}.

However, all the above-mentioned works rely heavily on monotonicity of $A$. For the nonmonotone case, Combettes and Pennanen \cite{CP}, Iusem et al. \cite{Iusem} and Pennanen \cite{Pennanen} replace the monotonicity assumptions appearing in earlier work by a weaker hypomonotonicity (see Remark $\ref{hypo}$) condition to study local convergence of proximal point methods. Weaker forms of monotonicity have been considered first in Spingarn \cite{Spingarn} (1981), where conditions are given that guarantee the local convergence of the proximal point algorithm without requiring monotonicity of $A$. Recently, when $\mathcal{A}$ is a comonotone operator (see Definition $\ref{comonotone}$), Bartz et al. \cite{BartzD} prove the relaxed proximal point algorithm 
$$x_{n+1}\in Tx_n~~~\mbox{where} ~~T:=(1-\kappa)Id+\kappa J_\gamma \mathcal{A}$$
converges weakly to a point in zer$\mathcal{A}$ and the rate of asymptotic regularity of $T$ is $o(\frac{1}{\sqrt{n}})$. Kohlenbach \cite{Kohlenbach} defines the Halpern-type proximal point algorithm by
$$x_{n+1}:=\alpha_n u+(1-\alpha_n)J_{\gamma_n}^{\mathcal{A}}x_n\in C,$$
where $\{\alpha_n\}\subset (0,1]$, $\lim_{n\rightarrow +\infty}\alpha_n=0$, $\sum_{n=0}^{+\infty}\alpha_n=+\infty$, $u,x_0\in C$ and $C\subseteq \mathcal{H}$ is a nonempty closed and convex sunset. This yields $\{x_n\}$ strongly converges to the zero of $\mathcal{A}$.
For more results on comonotone operators, we refer the reader to \cite{Cai,Liu,BartzC,BartzP} and the references therein.

There is a long history of using dynamical systems to solve optimization problems \cite{AttouchP,Fiori,Helmke,Alvarez,AttouchM}. 
Su, Boyd and Cand$\grave{e}$s \cite{Su} introduce the second order in time evolution equation with vanishing damping is defined as for $t\geq t_0$ 
\begin{equation}\label{Su}
\ddot{x}(t)+\frac{\alpha}{t}\dot{x}(t)+\bigtriangledown f(x(t))=0,	
\end{equation}
where $\alpha>0$ and $f:\mathcal{H}\rightarrow \mathbb{R}$ is a convex $\mathcal{C}^{1}$ function. Provided $\alpha\geq 3$, they prove the fast convergence property $f(x(t))-\min_{\mathcal{H}}f=O(t^{-2})$.   This contribution is the starting point of significant research activities devoted to this type of dynamics, which is an improvement of Polyak's heavy ball method with friction
$$\ddot{x}(t)+\alpha\dot{x}(t)+\bigtriangledown f(x(t))=0.$$ 
Weak convergence of the trajectories generated by $(\ref{Su})$ when $\alpha>3$ has been shown by Attouch et al. \cite{Attouchz} and May \cite{May}, with the improved rate of convergence for the functional values $f(x(t))-\min_{\mathcal{H}}f=o(t^{-2})$ as $t\rightarrow +\infty$.

Later, in \cite{AttouchPR} Attouch et al. add a Hessian driven damping term in $(\ref{Su})$, which makes it naturally related to Newton's and Levenberg-Marquardt methods, and get the following dynamical system
\begin{equation}\label{Hes}
\ddot{x}(t)+\frac{\alpha}{t}\dot{x}(t)+\beta{\bigtriangledown}^2 f(x(t))\dot{x}(t)+\bigtriangledown f(x(t))=0,	
\end{equation}
where $\beta\geq 0$ and $f$ is a convex $\mathcal{C}^{2}$ function. For $\alpha\geq 3$ and $\beta>0$, fast convergence of the values $f(x(t))-\min_{\mathcal{H}}f=O(t^{-2})$ is obtained. The addition of the Hessian driven damping term not only retains the convergence properties of the Nesterov accelerated method, but also provides fast convergence of the gradients to zero. Several recent studies have been devoted to this subject, see \cite{Kim,AttouchJ,BotC,LinJ,Shib}.

Second order dynamics with vanishing damping have been considered also in the context of solving monotone inclusion problems (see \cite{AttouchP})
$$\ddot{x}(t)+\frac{\alpha}{t}\dot{x}(t)+A_{\lambda(t)}(x(t))=0,$$
where $A_{\lambda}$ is the Yosida regularization of $A$ of parameter $\lambda>0$. Attouch and Peypouquet\cite{AttouchP} prove that, under the condition $\lambda(t)\times \frac{{\alpha}^2}{t^2}>1$ for $t\geq t_0>0$, the trajectory of the dynamical system converges weakly to a zero of $A$ and $\|\dot{x}(t)\|=O(1/t)$, where $A$ is a maximally monotone operator. This evolution equation has been further developed in \cite{AttouchSC}, where, in analogy with the dynamics in $(\ref{Hes})$, an additional Newton-like correction term is considered
\begin{equation}\label{sys6}
\ddot{x}(t)+\frac{\alpha}{t}\dot{x}(t)+b\frac{d}{dt}(A_{\lambda(t)}(x(t)))+A_{\lambda(t)}(x(t))=0.	
\end{equation}

As far as we know, there are few papers using dynamical system methods to solve the nonmonotone inclusion problem. Therefore, the motivation of this paper is to establish the following second order dynamical system to solve the inclusion problem $(\ref{problem})$
\begin{equation}\label{DS}
\left\{
\begin{array}{rcl}
&&\mu(t)=\mathcal{A}_{\eta}x(t)\\
&&\ddot{x}(t)+\dot{\mu}(t)+\frac{\alpha}{t}\dot{x}(t)+\frac{\beta}{t}\mu(t)=0,
\end{array}
\right.
\end{equation}	
where $\mathcal{A}:\mathcal{H}\rightarrow 2^{\mathcal{H}}$ is a maximal comonotone  operator. It is worth noting that in \cite{AttouchSC} the operator $\mathcal{A}$ is maximally monotone whereas in our framework the operator $\mathcal{A}$ requires only maximal  comonotone, not necessarily monotone. Under the same conditions, the trajectories generated by the dynamical system $(\ref{DS})$ seems to have a better convergence behavior than the trajectories generated by $(\ref{sys6})$ as some numerical experiments show. In addition, the freedom of controlling the parameter $\beta$ in $(\ref{DS})$ is essential as the numerical experiments section show. Our main work is to study the asymptotic behavior of dynamical system $(\ref{DS})$ and obtain some fast convergence properties. By discretization of the dynamical system $(\ref{DS})$, a new numerical algorithm can be proposed for solving inclusion problem $(\ref{problem})$ where $\mathcal{A}$ is a maximal comonotone operator. In our setting, because of the singularity of the damping coefficient $\frac{\alpha}{t}$ at $t=0$, we always set the initial time $t_0>0$.

The remainder of this paper is organized as follows. Section 2 consists of some preliminary results. In Section 3, the existence and uniqueness of solutions of the considered dynamical system $(\ref{DS})$ are proved. Section 4 describes the weak convergence and relative convergence rate along the tarjectories of the system $(\ref{DS})$. A discrete version dynamical system and its weak convergence are presented in Section 5. Finally, some numerical experiments are reported in Section 6 to illustrate the obtained theoretical results.

\section{Preliminaries}

We will employ standard notations that generally follow \cite{BauschkeC}. Throughout, $\mathcal{H}$ is a real Hilbert space with the inner product $\langle \cdot, \cdot \rangle$ and its induced norm $\|\cdot\|$. The set of nonnegative integers is denoted by $\mathbb{N}$, the set of real numbers by $\mathbb{R}$, the set of nonnegative real numbers by $\mathbb{R}_{+}:=\{x\in \mathbb{R}|x\geq 0\}$, and the set of positive real numbers by $\mathbb{R}_{++}:=\{x\in \mathbb{R}|x > 0\}$. We use the notation $\mathcal{A}:\mathcal{H}\rightarrow 2^{\mathcal{H}}$ to indicate that $\mathcal{A}$ is a set-valued operator on $\mathcal{H}$ and the notation $\mathcal{A}:\mathcal{H}\rightarrow \mathcal{H}$ to indicate that $\mathcal{A}$ is a single-valued operator on $\mathcal{H}$. Given $\mathcal{A}:\mathcal{H}\rightarrow 2^{\mathcal{H}}$, its domain is denoted by $\mbox{ dom } \mathcal{A}:=\{x\in \mathcal{H}|\mathcal{A}x\neq \emptyset\}$, its graph is denoted by $\mbox{ gra }\mathcal{A}:=\{(x,u)\in \mathcal{H}\times \mathcal{H}|u\in \mathcal{A}x\}$, its set of zeros by $\mbox{ zer } \mathcal{A}:=\{x\in \mathcal{H}|0\in \mathcal{A}x\}$, its set of fixed points by $\mbox{ Fix } \mathcal{A}:=\{x\in \mathcal{H}|x\in \mathcal{A}x\}$, its resolvent with index $\gamma>0$ is denoted by $J_{\gamma}^{\mathcal{A}}:=(Id+\gamma \mathcal{A})^{-1}$ and its Yosida regularization with parameter $\gamma>0$ is denoted by $\mathcal{A}_{\gamma}:=\frac{1}{\gamma}(Id-J_{\gamma}^{\mathcal{A}})$.

\begin{definition}(See \cite{BauschkeC}) Let $T:\mathcal{H}\rightarrow \mathcal{H}$  and $\theta\in (0,1)$. 
 \begin{itemize}
 \item[(i)] $T$ is nonexpansive if $\|Tx-Ty\|\leq \|x-y\|$, $\forall (x,y)\in \mathcal{H}\times \mathcal{H}$.
 \item[(ii)] $T$ is $\theta-$averaged if there exists a nonexpansive operator $N:\mathcal{H}\rightarrow \mathcal{H}$ such that $T=(1-\theta)Id+\theta N$; equivalently, $\forall (x,y)\in \mathcal{H}\times \mathcal{H}$,
 $$(1-\theta)\|(Id-T)x-(Id-T)y\|^2\leq \theta(\|x-y\|^2-\|Tx-Ty\|^2).$$
 \item[(iii)] Let $\beta\in \mathbb{R}_{++}$. $T$ is $\beta-$cocoercive if 
 $$\langle x-y,Tx-Ty\rangle \geq \beta \|Tx-Ty\|^2, \forall(x,y)\in \mathcal{H}\times \mathcal{H}.$$	
 \end{itemize}
\end{definition}
 
\begin{definition} (See \cite[Definition 2.3] {BauschkeMW}) \label{comonotone}
 Let $\mathcal{A}:\mathcal{H}\rightarrow 2^{\mathcal{H}}$ and $\rho\in \mathbb{R}$. Then 
 \begin{itemize}
 \item[(i)] $\mathcal{A}$ is $\rho-$monotone if $\forall (x,u)\in \mbox{gra} \mathcal{A}$, $\forall (y,v)\in \mbox{gra }\mathcal{A}$, we have
 $$\langle x-y,u-v\rangle \geq \rho \|x-y\|^2.$$
 \item[(ii)] $\mathcal{A}$ is maximally $\rho-$monotone if $\mathcal{A}$ is $\rho-$monotone and there is no other $\rho-$monotone operator $\mathcal{B}:\mathcal{H}\rightarrow 2^{\mathcal{H}}$ such that $\text{gra} \mathcal{B}$ properly contains $\text{gra}\mathcal{A}$, $i.e.$, for every $(x,u)\in \mathcal{H} \times \mathcal{H}$,
 $$(x,u)\in \mbox{gra}\mathcal{A}\Leftrightarrow (\forall(y,v)\in \mbox{gra}\mathcal{A}) \langle x-y,u-v\rangle \geq \rho \|x-y\|^2.$$
 \item[(iii)] $\mathcal{A}$ is $\rho-$comonotone if $\forall (x,u)\in \mbox{gra} \mathcal{A}$, $\forall (y,v)\in \mbox{gra}\mathcal{A}$, 
 $$\langle x-y,u-v\rangle \geq \rho \|u-v\|^2.$$
 \item[(iv)] $\mathcal{A}$ is maximally $\rho-$comonotone if $\mathcal{A}$ is $\rho-$comonotone and there is no other $\rho-$comonotone operator $\mathcal{B}:\mathcal{H}\rightarrow 2^{\mathcal{H}}$ such that gra$\mathcal{B}$ properly contains gra$\mathcal{A}$, $i.e.$, for every $(x,u)\in \mathcal{H} \times \mathcal{H}$,
 $$(x,u)\in \mbox{gra}\mathcal{A}\Leftrightarrow (\forall(y,v)\in \mbox{gra}\mathcal{A}) \langle x-y,u-v\rangle \geq \rho \|u-v\|^2.$$	
 \end{itemize}
\end{definition}

\begin{remark} (See \cite[Remark 2.4]{BauschkeMW})\label{hypo}
\begin{itemize}
\item[(i)] When $\rho=0$, both $\rho-$monotonicity of $\mathcal{A}$ and $\rho-$comonotonicity of $\mathcal{A}$ reduce to the monotonicity of $\mathcal{A}$; equivalently to the monotonicity of ${\mathcal{A}}^{-1}$.
\item[(ii)] When $\rho<0$, $\rho-$monotonicity is know as $\rho-$hypomonotonicity, see \cite[Example 12.28] {RockafellarWets} and \cite [Definition 6.9.1] {Burachik}. In this case, the $\rho-$comonotonicity is also known as $\rho-$cohypomonotonicity (see \cite[Definition2.2]{CP}).
\item[(iii)] In passing, we point out that when $\rho>0$, $\rho-$monotonicity of  $\mathcal{A}$ reduces to $\rho-$strong monotonicity of $\mathcal{A}$, while $\rho-$comonotonicity of $\mathcal{A}$ reduces to $\rho-$cocoercivity of $\mathcal{A}$.
\end{itemize}	
\end{remark}

\begin{proposition} (See \cite [Proposition 3.7] {BauschkeMW}) \label{averaged}
Let $D$ be a nonempty subset of $\mathcal{H}$, $T: D\rightarrow \mathcal{H}$,  and $ \theta\in (0,1)$. Set $\mathcal{A}=T^{-1}-Id$, $i.e.$, $T=J^{\mathcal{A}}$, and $\rho=\frac{1}{2\theta}-1$. Then the following  conclusions hold:
\begin{itemize}
\item[(i)] $T$ is $\theta-$averaged $\Leftrightarrow$ $\mathcal{A}$ is $\rho-$comonotone.
\item[(ii)] $T$ is $\theta-$averaged and $D=\mathcal{H}$	 $\Leftrightarrow$ $\mathcal{A}$ is maximally $\rho-$comonotone.
\end{itemize}	
\end{proposition}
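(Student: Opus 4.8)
The plan is to exploit the explicit bijection between $\mbox{gra}\,\mathcal{A}$ and $D$ induced by the resolvent. Since $\mathcal{A}=T^{-1}-Id$, a pair $(p,q)$ lies in $\mbox{gra}\,\mathcal{A}$ if and only if $p+q\in D$ and $p=T(p+q)$; hence the map $x\mapsto(Tx,\,x-Tx)$ is a bijection from $D$ onto $\mbox{gra}\,\mathcal{A}$ with inverse $(p,q)\mapsto p+q$. Fixing $x_{1},x_{2}\in D$ and writing $a:=x_{1}-x_{2}$, $b:=Tx_{1}-Tx_{2}$, the two graph points associated with $x_{1},x_{2}$ satisfy $p_{1}-p_{2}=b$ and $q_{1}-q_{2}=a-b$. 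The whole argument then reduces to comparing, pair by pair, the averagedness inequality of $T$ (read on $D$) with the comonotonicity inequality of $\mathcal{A}$.

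For (i), I would carry out the elementary algebra. The defining inequality of $\theta$-averagedness reads $(1-\theta)\|a-b\|^{2}\le\theta(\|a\|^{2}-\|b\|^{2})$, while $\rho$-comonotonicity of $\mathcal{A}$ reads $\langle b,\,a-b\rangle\ge\rho\|a-b\|^{2}$. Using the identity $\|a\|^{2}-\|b\|^{2}=\|a-b\|^{2}+2\langle a-b,\,b\rangle$ and dividing by $2\theta>0$, the averagedness inequality becomes $\tfrac{1-2\theta}{2\theta}\|a-b\|^{2}\le\langle b,\,a-b\rangle$, and since $\rho=\tfrac{1}{2\theta}-1=\tfrac{1-2\theta}{2\theta}$ this is exactly the comonotonicity inequality. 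As the correspondence $x\mapsto(Tx,x-Tx)$ is a bijection onto $\mbox{gra}\,\mathcal{A}$, quantifying over all $x_{1},x_{2}\in D$ is the same as quantifying over all pairs in $\mbox{gra}\,\mathcal{A}$, which proves the equivalence in (i).

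For the forward implication of (ii), assume in addition $D=\mathcal{H}$ and take any $(x,u)$ satisfying $\langle x-y,\,u-v\rangle\ge\rho\|u-v\|^{2}$ for every $(y,v)\in\mbox{gra}\,\mathcal{A}$; I must show $(x,u)\in\mbox{gra}\,\mathcal{A}$. Because $D=\mathcal{H}$, the point $z:=x+u$ lies in $D$, so $(p,q):=(Tz,\,z-Tz)\in\mbox{gra}\,\mathcal{A}$ and $p+q=z=x+u$, whence $x-p=-(u-q)$. Testing the hypothesis against $(p,q)$ gives $-\|u-q\|^{2}\ge\rho\|u-q\|^{2}$, that is $(1+\rho)\|u-q\|^{2}\le0$; since $\theta\in(0,1)$ forces $\rho=\tfrac{1}{2\theta}-1>-\tfrac12$, we have $1+\rho>0$ and therefore $u=q$, $x=p$, so $(x,u)\in\mbox{gra}\,\mathcal{A}$. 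Combined with (i), this shows $\mathcal{A}$ is maximally $\rho$-comonotone.

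The converse implication of (ii) is where the real work lies: assuming $\mathcal{A}$ maximally $\rho$-comonotone, (i) already gives that $T=J^{\mathcal{A}}$ is $\theta$-averaged, and it remains to prove $D=\mbox{dom}\,J^{\mathcal{A}}=\mbox{ran}(Id+\mathcal{A})=\mathcal{H}$, i.e. surjectivity of $Id+\mathcal{A}$. This is a Minty-type statement and is the main obstacle. I would reduce it to the classical Minty theorem by passing to the inverse: $\mathcal{A}$ maximally $\rho$-comonotone is equivalent (by flipping graphs, cf. Remark \ref{hypo}) to $\mathcal{A}^{-1}$ being maximally $\rho$-monotone, so that $\mathcal{C}:=\mathcal{A}^{-1}-\rho\,Id$ is maximally monotone. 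Since $1+\rho>0$, Minty's theorem applied to the maximally monotone operator $\tfrac{1}{1+\rho}\mathcal{C}$ shows $Id+\mathcal{A}^{-1}=(1+\rho)Id+\mathcal{C}$ is surjective; finally the elementary identity $\mbox{ran}(Id+\mathcal{A})=\mbox{ran}(Id+\mathcal{A}^{-1})$ (both equal the set of $z=x+u$ with $u\in\mathcal{A}x$) gives $\mbox{ran}(Id+\mathcal{A})=\mathcal{H}$, hence $D=\mathcal{H}$, completing the proof.
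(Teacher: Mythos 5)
Your proof is correct. Note that the paper itself gives no proof of this proposition --- it is imported verbatim from [Proposition 3.7] of Bauschke--Moursi--Wang (reference \cite{BauschkeMW}) --- and your argument is essentially the standard one found there: the bijection $x\mapsto (Tx,\,x-Tx)$ between $D$ and $\mbox{gra}\,\mathcal{A}$ together with the identity $\|a\|^2-\|b\|^2=\|a-b\|^2+2\langle a-b,b\rangle$ converts the averagedness inequality into the comonotonicity inequality with $\rho=\tfrac{1-2\theta}{2\theta}$, and the surjectivity of $Id+\mathcal{A}$ in (ii) is obtained from Minty's theorem applied to the maximally monotone operator $\mathcal{A}^{-1}-\rho\,Id$ (which is exactly Proposition \ref{z}-(v) of this paper), using $1+\rho>\tfrac12>0$ and $\mbox{ran}(Id+\mathcal{A})=\mbox{ran}(Id+\mathcal{A}^{-1})$. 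All steps check out, including the maximality argument in the forward direction of (ii), where testing against $(Tz,\,z-Tz)$ with $z=x+u$ yields $(1+\rho)\|u-q\|^2\le 0$.
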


\begin{remark}\label{av}
Set $\gamma \mathcal{A}=T^{-1}-Id$, $i.e.$, $T=J_{\gamma}^{\mathcal{A}}$ and set $\rho=(\frac{1}{2\theta}-1)\gamma>-\frac{\gamma}{2}$, where $\gamma>0$. Then we have: $T$ is $\theta-$averaged $\Leftrightarrow$ $\mathcal{A}$ is $\rho-$comonotone. Furthermore, the crucial relation between $\mathcal{A}$ and $J_{\gamma}^{\mathcal{A}}$ is that the set of zeros of $\mathcal{A}$ coincides with the fixed point set of $J_{\gamma}^{\mathcal{A}}$ (which, therefore, in particular does not depend on the choice of $\gamma> 0$). Therefore $\mbox{zer} \mathcal{A}=\mbox{Fix} J_{\gamma}^{\mathcal{A}}=\mbox{zer} \mathcal{A}_{\gamma}$(See \cite [Proposition 23.2] {BauschkeC}  and \cite [Proposition 2.13] {BauschkeMW}).
\end{remark}

\begin{proposition}(See \cite [Proposition 3.7] {BartzD}) \label{single}
Let $\mathcal{A}:\mathcal{H}\rightarrow 2^{\mathcal{H}}$ be $\rho-$comonotone and let $\gamma\in \mathbb{R}_{++}$ such that $\gamma+\rho>0$. Then
\begin{itemize}
\item[(i)] $J_{\gamma}^{\mathcal{A}}$ is single-valued.
\item[(ii)]  $\mbox{dom} J_{\gamma}^{\mathcal{A}}=\mathcal{H}$ if and only if $\mathcal{A}$ is maximally $\rho-$comonotone.	
\end{itemize}
\end{proposition}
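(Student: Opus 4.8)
The plan is to handle the two parts separately: part (i) follows from a one-line use of the defining comonotonicity inequality, while part (ii) is a Minty-type surjectivity statement that I would reduce to the classical Minty theorem for maximally monotone operators.

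For (i), I would fix $x\in\mathcal{H}$ and take any two points $p_1,p_2\in J_\gamma^{\mathcal{A}}(x)$. By the definition of the resolvent, $\frac{1}{\gamma}(x-p_i)\in\mathcal{A}p_i$ for $i=1,2$, so both pairs $(p_i,\frac{1}{\gamma}(x-p_i))$ lie in $\mbox{gra}\,\mathcal{A}$. Inserting them into the $\rho$-comonotonicity inequality and using $\frac{1}{\gamma}(x-p_1)-\frac{1}{\gamma}(x-p_2)=-\frac{1}{\gamma}(p_1-p_2)$ collapses the estimate to $-\frac{1}{\gamma}\|p_1-p_2\|^2\ge\frac{\rho}{\gamma^2}\|p_1-p_2\|^2$, i.e. $(\gamma+\rho)\|p_1-p_2\|^2\le0$. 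Since $\gamma+\rho>0$ by hypothesis, this forces $p_1=p_2$, which is exactly single-valuedness. I expect no obstacle here.

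For (ii), the starting point is the identity $\mbox{dom}\,J_\gamma^{\mathcal{A}}=\mbox{ran}(Id+\gamma\mathcal{A})$, so the claim says precisely that $Id+\gamma\mathcal{A}$ is surjective if and only if $\mathcal{A}$ is maximally $\rho$-comonotone. I would reduce this to the monotone case through the operator $\mathcal{B}:=\mathcal{A}^{-1}-\rho\,Id$. Inspecting graphs via the bijection $(x,u)\mapsto(u,x-\rho u)$ of $\mathcal{H}\times\mathcal{H}$ shows that $\mathcal{A}$ is $\rho$-comonotone iff $\mathcal{B}$ is monotone, and—since a bijection of $\mathcal{H}\times\mathcal{H}$ preserves graph inclusions—that $\mathcal{A}$ is maximally $\rho$-comonotone iff $\mathcal{B}$ is maximally monotone. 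The same change of variables gives the range identity $\mbox{ran}(Id+\gamma\mathcal{A})=\mbox{ran}((\gamma+\rho)Id+\mathcal{B})$: writing $y=p+\gamma u$ with $u\in\mathcal{A}p$ and using $p\in\mathcal{A}^{-1}u=\mathcal{B}u+\rho u$, one gets $y\in\mathcal{B}u+(\gamma+\rho)u$, and the step is reversible. Setting $c:=\gamma+\rho>0$, surjectivity of $(\gamma+\rho)Id+\mathcal{B}$ is equivalent to that of $Id+\frac{1}{c}\mathcal{B}$, where $\frac{1}{c}\mathcal{B}$ is maximally monotone exactly when $\mathcal{B}$ is. Applying the classical Minty theorem to $\frac{1}{c}\mathcal{B}$ (a monotone operator is maximally monotone iff $Id$ plus it is surjective; see \cite{BauschkeC}) closes the equivalence chain and yields $\mbox{dom}\,J_\gamma^{\mathcal{A}}=\mathcal{H}\Leftrightarrow\mathcal{A}$ is maximally $\rho$-comonotone.

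The step I expect to be most delicate is the bookkeeping in the graph transformation: verifying that maximality is genuinely transported by $(x,u)\mapsto(u,x-\rho u)$, and that the range identity carries the correct constant $\gamma+\rho$. I would also flag that the naive route through the averagedness characterization in Remark~\ref{av} / Proposition~\ref{averaged} is \emph{not} sufficient here, since that correspondence requires $\theta\in(0,1)$ and hence only covers $\rho>-\gamma/2$, whereas the hypothesis $\gamma+\rho>0$ allows $\rho$ all the way down to $-\gamma$; the Minty reduction above is precisely what lets me treat the full admissible range of $\rho$ in one stroke.
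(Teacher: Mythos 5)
The paper does not prove this proposition at all: it is quoted verbatim from Bartz--Dao--Phan \cite{BartzD}, so there is no in-paper argument to compare against. Your blind proof is, however, correct and self-contained. Part (i) is the standard computation: inserting the two pairs $(p_i,\tfrac{1}{\gamma}(x-p_i))\in\mbox{gra}\,\mathcal{A}$ into the $\rho$-comonotonicity inequality does collapse to $(\gamma+\rho)\|p_1-p_2\|^2\le 0$. Part (ii) is also sound: the identity $\mbox{dom}\,J_{\gamma}^{\mathcal{A}}=\mbox{ran}(Id+\gamma\mathcal{A})$, the transfer of (maximal) $\rho$-comonotonicity of $\mathcal{A}$ to (maximal) monotonicity of $\mathcal{B}=\mathcal{A}^{-1}-\rho Id$ via the linear bijection $(x,u)\mapsto(u,x-\rho u)$, the range identity $\mbox{ran}(Id+\gamma\mathcal{A})=\mbox{ran}((\gamma+\rho)Id+\mathcal{B})$, and the final appeal to Minty's theorem for $\tfrac{1}{\gamma+\rho}\mathcal{B}$ all check out (the rescaling by $\gamma+\rho>0$ preserves both surjectivity and maximal monotonicity). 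Note that the graph-transfer step you flag as delicate is exactly Proposition~\ref{z}-(v) of the paper, which the authors themselves invoke (with the same operator $\mathcal{A}^{-1}-\rho Id$) in the proof of Proposition~\ref{maximal}; you could simply cite it rather than re-derive it. Your closing caveat is also well taken: the averagedness correspondence of Remark~\ref{av} only reaches $\rho>-\gamma/2$, so the Minty reduction is genuinely needed to cover the full range $\rho>-\gamma$; the cited source \cite{BartzD} handles that regime via the notion of conical averagedness, which is a packaged version of the same idea.
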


It is worth noting that from Remark $\ref{av}$ and Proposition $\ref{single}$ we can reasonably assume that for $\gamma>\max\{-2\rho,0\}$, the resolvent $J_{\gamma}^{\mathcal{A}}$ and the Yosida regularization $\mathcal{A}_{\gamma}$ are single-valued and averaged where $\mathcal{A}$ is $\rho-$comonotone.

\begin{proposition}\label{z}
Let $\mathcal{A}:\mathcal{H}\rightarrow 2^{\mathcal{H}}$ be $\rho-$comonotone, $\gamma>\max\{-2\rho,0\}$, and $\delta>\max\{-2\rho,0\}$. For any $x,y \in \mathcal{H}$, the following conclusions hold:
\begin{itemize}
\item[(i)] $J_{\gamma}^{\mathcal{A}}: \mathcal{H}\rightarrow \mathcal{H}$ is $\frac{\gamma}{2(\rho+\gamma)}-$averaged and ${\mathcal{A}}_{\gamma}:  \mathcal{H}\rightarrow \mathcal{H}$ is $\frac{2}{\gamma}-$Lipschitz continuous.
\item[(ii)] $\eta\in J_{\gamma}^{\mathcal{A}}x \Leftrightarrow (\eta, {\gamma}^{-1}(x-\eta))\in \mbox{gra}\mathcal{A}$.
\item[(iii)] $\mathcal{A}_{\gamma+\delta}=(\mathcal{A}_{\gamma})_{\delta}$.
\item[(iv)] $\mathcal{A}$ is maximally $\rho-$comonotone $\Leftrightarrow$ $\mathcal{A}_{\gamma}$ is $(\rho+\gamma)-$cocoercive. 
\item[(v)] $\mathcal{A}$ is maximally $\rho-$comonotone $\Leftrightarrow$ ${\mathcal{A}}^{-1}-\rho Id$ is maximally monotone.
\end{itemize} 
\end{proposition}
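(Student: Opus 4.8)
My plan is to prove the five items more or less independently, using the resolvent characterization in (ii) as the common workhorse and calling on Proposition \ref{single} and Remark \ref{av} to control single-valuedness and the averaging constant. I would dispatch (ii) first, since it is pure unwinding of definitions: $\eta\in J_{\gamma}^{\mathcal{A}}x=(Id+\gamma\mathcal{A})^{-1}x$ is equivalent to $x\in \eta+\gamma\mathcal{A}\eta$, i.e. to $\gamma^{-1}(x-\eta)\in\mathcal{A}\eta$, which is exactly $(\eta,\gamma^{-1}(x-\eta))\in\mbox{gra}\,\mathcal{A}$. For (i), the hypothesis $\gamma>\max\{-2\rho,0\}$ forces $\gamma+\rho>0$, so Proposition \ref{single}(i) makes $J_{\gamma}^{\mathcal{A}}$ single-valued, and Remark \ref{av} identifies it as $\theta$-averaged with $\rho=(\frac{1}{2\theta}-1)\gamma$; solving for $\theta$ yields $\theta=\frac{\gamma}{2(\rho+\gamma)}$, and the inequality $\gamma>-2\rho$ is precisely what guarantees $\theta\in(0,1)$. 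Since averaged operators are nonexpansive, the Lipschitz bound on $\mathcal{A}_{\gamma}=\gamma^{-1}(Id-J_{\gamma}^{\mathcal{A}})$ follows from the triangle inequality, $\|\mathcal{A}_{\gamma}x-\mathcal{A}_{\gamma}y\|\le\gamma^{-1}\big(\|x-y\|+\|J_{\gamma}^{\mathcal{A}}x-J_{\gamma}^{\mathcal{A}}y\|\big)\le\frac{2}{\gamma}\|x-y\|$.

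For (iv) I would apply (ii) to $p=J_{\gamma}^{\mathcal{A}}x$ and $q=J_{\gamma}^{\mathcal{A}}y$, so that $(p,\mathcal{A}_{\gamma}x),(q,\mathcal{A}_{\gamma}y)\in\mbox{gra}\,\mathcal{A}$, and feed these into $\rho$-comonotonicity. Substituting $p-q=(x-y)-\gamma(\mathcal{A}_{\gamma}x-\mathcal{A}_{\gamma}y)$ turns $\langle p-q,\mathcal{A}_{\gamma}x-\mathcal{A}_{\gamma}y\rangle\ge\rho\|\mathcal{A}_{\gamma}x-\mathcal{A}_{\gamma}y\|^{2}$ into $\langle x-y,\mathcal{A}_{\gamma}x-\mathcal{A}_{\gamma}y\rangle\ge(\rho+\gamma)\|\mathcal{A}_{\gamma}x-\mathcal{A}_{\gamma}y\|^{2}$, which is $(\rho+\gamma)$-cocoercivity; the computation reverses verbatim, and the passage between ``cocoercive on all of $\mathcal{H}$'' and ``maximal'' is supplied by the full-domain criterion in Proposition \ref{single}(ii). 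For (v) I would observe that the affine bijection $(x,u)\mapsto(u,x-\rho u)$ of $\mathcal{H}\times\mathcal{H}$ carries $\mbox{gra}\,\mathcal{A}$ onto $\mbox{gra}\,(\mathcal{A}^{-1}-\rho Id)$, and that under it the relation $\langle x-y,u-v\rangle\ge\rho\|u-v\|^{2}$ becomes exactly $\langle u-v,(x-\rho u)-(y-\rho v)\rangle\ge 0$; hence $\rho$-comonotonicity of $\mathcal{A}$ is equivalent to monotonicity of $\mathcal{A}^{-1}-\rho Id$. As the map is a linear isomorphism it preserves proper graph inclusions, so maximal elements correspond to maximal elements, upgrading the equivalence to the maximal versions.

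For the semigroup identity (iii) I would use the standard resolvent substitution, made rigorous by first securing single-valuedness: the forward part of (iv) already shows $\mathcal{A}_{\gamma}$ is $(\rho+\gamma)$-comonotone with $\rho+\gamma>0$, so $\delta+(\rho+\gamma)>0$ and $(\mathcal{A}_{\gamma})_{\delta}$ is well defined. Fixing $x$ and setting $w=(\mathcal{A}_{\gamma})_{\delta}x$, $q=J_{\delta}^{\mathcal{A}_{\gamma}}x$, and $r=J_{\gamma}^{\mathcal{A}}q$, repeated use of (ii) gives $w=\delta^{-1}(x-q)=\gamma^{-1}(q-r)\in\mathcal{A}r$ with $q=x-\delta w$, whence $r=x-(\gamma+\delta)w$. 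Reading this backwards through (ii) identifies $r=J_{\gamma+\delta}^{\mathcal{A}}x$ and $w=\mathcal{A}_{\gamma+\delta}x$, which is the asserted equality.

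I expect the \emph{main obstacle} to be the bookkeeping that keeps every resolvent single-valued and every Yosida map well defined under the merely comonotone (possibly nonmonotone, $\rho<0$) hypothesis: the inequalities $\gamma+\rho>0$, $\delta+(\rho+\gamma)>0$, and $\theta\in(0,1)$ must all be tracked so that (ii) may be applied as a genuine equality rather than a set inclusion, and the transfer of maximality in (iv) and (v) has to be argued rather than assumed. Once these are in place, the underlying algebra is routine.
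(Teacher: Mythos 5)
Your proposal is correct, but it takes a more self-contained route than the paper. The paper only argues one point directly, namely the $\frac{2}{\gamma}$-Lipschitz bound in (i), and it does so exactly as you do (triangle inequality applied to $\mathcal{A}_{\gamma}=\gamma^{-1}(Id-J_{\gamma}^{\mathcal{A}})$ together with nonexpansiveness of the averaged resolvent); the averaging constant in (i) is read off from Remark \ref{av} just as in your argument. For the remaining items the paper simply cites external sources: (ii) and (iii) to Bauschke--Combettes (Propositions 23.2 and 23.7, valid for arbitrary set-valued operators), (iv) to a lemma of Attouch--Peypouquet--Redont, and (v) to a lemma of Bauschke--Moursi--Wang. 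Your substitutes for these citations are all sound: the unwinding of (ii), the substitution $p-q=(x-y)-\gamma(\mathcal{A}_{\gamma}x-\mathcal{A}_{\gamma}y)$ for (iv) (every point of $\mbox{gra}\,\mathcal{A}$ is reached as $(J_{\gamma}^{\mathcal{A}}x,\mathcal{A}_{\gamma}x)$ with $x=\eta+\gamma u$, so the reversal is legitimate), the graph isomorphism $(x,u)\mapsto(u,x-\rho u)$ for (v), and the resolvent chaining for (iii). What the paper's version buys is brevity; what yours buys is a visible audit trail for exactly where the thresholds $\gamma+\rho>0$ and $\gamma>-2\rho$ (equivalently $\theta\in(0,1)$) enter, which is the genuinely delicate bookkeeping in the comonotone setting, and an explicit maximality transfer in (iv) and (v) rather than an appeal to the literature. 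One small remark: the cited version of (iii) holds for arbitrary set-valued $\mathcal{A}$ as an identity of (possibly set-valued) operators, whereas your derivation uses single-valuedness of the resolvents; under the standing hypotheses this costs nothing.
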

\begin{proof}
From Remark $\ref{av}$, $J_{\gamma}^{\mathcal{A}}$ is $\frac{\gamma}{2(\rho+\gamma)}-$averaged. 
We use the definition of the Yosida approximation and the average operator property of the resolvent to obtain
\begin{eqnarray}\label{Yosida}
\|\mathcal{A}_{\gamma}x-\mathcal{A}_{\gamma}y\|&=&\frac{1}{\gamma}\|(x-y)-(J_{\gamma}^{\mathcal{A}}x-J_{\gamma}^{\mathcal{A}}y)\|\nonumber\\
&\leq& \frac{1}{\gamma}\|x-y\|+\frac{1}{\gamma}\|J_{\gamma}^{\mathcal{A}}x-J_{\gamma}^{\mathcal{A}}y)\|\nonumber\\
&\leq& \frac{2}{\gamma}\|x-y\|.
\end{eqnarray}
$(ii)$ is from \cite[Proposition 23.2-(ii)]{BauschkeC}. This is true for $\mathcal{A}$ being a general operator.
$(iii)$ is from \cite[Proposition 23.7-(iii)]{BauschkeC}. This is true for $\mathcal{A}$ being a general operator.
$(iv)$ is from \cite[Lemma 3.2]{APR}, where $\gamma-\alpha=-\rho$.\\
$(v)$ is from \cite[Lemma 2.8]{BauschkeMW}.
\end{proof}

\begin{remark}\label{tuidao}
Let $y=x+\mathcal{A}_{\eta}x$. Then $x$ can be expressed in the following form
$$x=(1-\frac{1}{\eta+1})y+\frac{1}{\eta+1}J_{\eta+1}^{\mathcal{A}}y.$$	
\end{remark}
Indeed, according to the definition of $\mathcal{A}_{\eta}$ and $J_{\eta+1}^{\mathcal{A}}$,
\begin{eqnarray*}
&&y=x+\mathcal{A}_{\eta}x\\
&\Leftrightarrow & x=(Id+\mathcal{A}_{\eta})^{-1}y\\
&\Leftrightarrow & x=J_{1}^{\mathcal{A}_{\eta}}y\\
&\Leftrightarrow & x=[Id-(Id-J_{1}^{\mathcal{A}_{\eta}})]y\\
&\Leftrightarrow & x=[Id-(\mathcal{A}_{\eta})_1]y\\
&\overset{\text{\mbox{Proposition}\ref{z}-(iii)}}{\Leftrightarrow} & x=[Id-(\mathcal{A}_{\eta+1})]y\\
&\Leftrightarrow & x-y=-\mathcal{A}_{\eta+1}y\\
&\Leftrightarrow & x-y=-\frac{y-J_{\eta+1}^{\mathcal{A}}y}{\eta+1}\\
&\Leftrightarrow & x=(1-\frac{1}{\eta+1})y+\frac{1}{\eta+1}J_{\eta+1}^{\mathcal{A}}y.
\end{eqnarray*}	

\begin{proposition}\label{maximal}
Let $\mathcal{A}:\mathcal{H}\rightarrow 2^{\mathcal{H}}$ be maximally $\rho-$comonotone. For every sequence $(x_n,u_n)_{n\in \mathbb{N}}$ in gra$\mathcal{A}$ and every $(x,u)\in \mathcal{H} \times \mathcal{H}$, if $x_n\rightharpoonup x$ and $u_n\longrightarrow u$, then $(x,u)\in \mbox{gra} \mathcal{A}$.
\begin{proof}
Since  $\mathcal{A}$ is $\rho-$comonotone, for $\forall (x_n,u_n)\in \mbox{gra}\mathcal{A}$ and $\forall (y,v)\in \mbox{gra}\mathcal{A}$, we have $\langle x_n-y, u_n-v \rangle\geq \rho\|u_n-v\|^2$, which implies
\begin{eqnarray}\label{dayu}
\langle x_n-\rho u_n-y+\rho v, u_n-v \rangle\geq 0.	
\end{eqnarray}
In addition, since $(x_n,u_n)\in \mbox{gra}\mathcal{A}$ and $(y,v)\in \mbox{gra}\mathcal{A}$, we have $(u_n,x_n-\rho u_n)\in \mbox{gra} ({\mathcal{A}}^{-1}-\rho Id)$ and $(v,y-\rho v)\in \mbox{gra} ({\mathcal{A}}^{-1}-\rho Id)$. According to Proposition $\ref{z}-(v)$, we know ${\mathcal{A}}^{-1}-\rho Id$  is maximal monotone. Applying \cite[ Proposition 20.38]{BartzD} to $(\ref{dayu})$, we get 
$$\langle x-\rho u-y+\rho v, u-v \rangle\geq 0,$$
equivalently,
$$\langle x-y, u-v \rangle\geq \rho\|u-v\|^2.$$
Since the above inequality is true for $\forall (y,v)\in \mbox{gra}\mathcal{A}$, we conclude that $(x,u)\in \mbox{gra}\mathcal{A}$.
\end{proof}	
\end{proposition}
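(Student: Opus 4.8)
The plan is to transport this statement, which is a weak--strong sequential closedness property of $\mbox{gra}\,\mathcal{A}$, to the analogous and classical closedness property for the graph of a maximally monotone operator, using the equivalence furnished by Proposition \ref{z}-(v). The bridge is the elementary observation that membership in $\mbox{gra}\,\mathcal{A}$ is equivalent to a suitable membership in $\mbox{gra}(\mathcal{A}^{-1}-\rho\,Id)$, namely $(x,u)\in\mbox{gra}\,\mathcal{A}\Leftrightarrow(u,x-\rho u)\in\mbox{gra}(\mathcal{A}^{-1}-\rho\,Id)$, together with the fact that the latter operator is maximally monotone.

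First I would fix an arbitrary pair $(y,v)\in\mbox{gra}\,\mathcal{A}$ and rewrite the defining inequality. Since each $(x_n,u_n)\in\mbox{gra}\,\mathcal{A}$ and $\mathcal{A}$ is $\rho$-comonotone, we have $\langle x_n-y,u_n-v\rangle\ge\rho\|u_n-v\|^2$, which rearranges into $\langle (x_n-\rho u_n)-(y-\rho v),\,u_n-v\rangle\ge 0$. This is precisely the monotonicity inequality for the two points $(u_n,x_n-\rho u_n)$ and $(v,y-\rho v)$, both of which lie in $\mbox{gra}(\mathcal{A}^{-1}-\rho\,Id)$ by the correspondence above.

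Next I would pass to the limit. Because $u_n\longrightarrow u$ strongly and $x_n\rightharpoonup x$ weakly, the combination $x_n-\rho u_n$ converges weakly to $x-\rho u$, while $u_n-v\longrightarrow u-v$ strongly. Invoking the weak--strong sequential closedness of the graph of the maximally monotone operator $\mathcal{A}^{-1}-\rho\,Id$ (equivalently, using that the pairing of a weakly convergent sequence against a strongly convergent one converges to the pairing of the limits, see \cite{BauschkeC}), taking $n\to\infty$ in the displayed inequality yields $\langle (x-\rho u)-(y-\rho v),\,u-v\rangle\ge 0$, that is $\langle x-y,u-v\rangle\ge\rho\|u-v\|^2$.

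Finally, since $(y,v)\in\mbox{gra}\,\mathcal{A}$ was arbitrary, the characterization of maximal $\rho$-comonotonicity in Definition \ref{comonotone}-(iv) delivers $(x,u)\in\mbox{gra}\,\mathcal{A}$, which is the claim. The one delicate point is the limit passage in the third step: it is essential that $u_n$ converge \emph{strongly} rather than merely weakly, since otherwise the mixed inner product $\langle x_n-\rho u_n,\,u_n-v\rangle$ need not converge to the pairing of the weak limits. This strong convergence of $u_n$ is exactly the hypothesis that activates the weak--strong closedness of the maximal monotone graph and makes the argument go through.
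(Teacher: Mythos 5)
Your proposal is correct and follows essentially the same route as the paper: rewrite the $\rho$-comonotonicity inequality as the monotonicity inequality for $(u_n,x_n-\rho u_n)$ and $(v,y-\rho v)$ in $\mbox{gra}(\mathcal{A}^{-1}-\rho\,Id)$, pass to the limit using the weak--strong convergence of the pairing, and conclude by the maximality characterization. The only cosmetic difference is that you justify the limit passage directly by the elementary weak--strong inner-product convergence, whereas the paper delegates it to a cited closedness result for maximally monotone graphs; both are valid and rest on the same strong convergence of $u_n$ that you rightly flag as the delicate point.
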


\begin{proposition}\label{der}
Let $\mathcal{A}:\mathcal{H}\rightarrow 2^{\mathcal{H}}$ be maximally $\rho-$comonotone, $\eta>\max\{-2\rho,0\}$, and $x(t)$ is a differentiable function. Set $z(t)=J_{\eta}^{\mathcal{A}}x(t)$. Then,
$$\|\dot{z}(t)\|\leq\|\dot{x}(t)\|.$$	
\end{proposition}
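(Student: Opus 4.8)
The plan is to reduce the inequality to the \emph{nonexpansiveness} of the resolvent together with a routine difference-quotient comparison. First I would record the structural fact supplied by Proposition \ref{z}-(i): the operator $J_{\eta}^{\mathcal{A}}$ is $\theta$-averaged with $\theta=\frac{\eta}{2(\rho+\eta)}$. The hypothesis $\eta>\max\{-2\rho,0\}$ is precisely what makes this averaging constant admissible, since it forces $\rho+\eta>0$ and also $\theta<1$ (indeed $\theta<1\iff \eta<2(\rho+\eta)\iff \eta>-2\rho$), so that $\theta\in(0,1)$. Because every $\theta$-averaged operator is a convex combination $(1-\theta)Id+\theta N$ of the identity with a nonexpansive operator $N$, the triangle inequality gives $\|J_{\eta}^{\mathcal{A}}u-J_{\eta}^{\mathcal{A}}v\|\leq(1-\theta)\|u-v\|+\theta\|u-v\|=\|u-v\|$; hence $J_{\eta}^{\mathcal{A}}$ is nonexpansive.

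Next I would transport this contraction property onto the trajectory. For a fixed $t$ and any admissible increment $h$, applying nonexpansiveness to the points $x(t+h)$ and $x(t)$ yields
\[
\|z(t+h)-z(t)\|=\|J_{\eta}^{\mathcal{A}}x(t+h)-J_{\eta}^{\mathcal{A}}x(t)\|\leq\|x(t+h)-x(t)\|,
\]
and after dividing by $|h|$ one obtains the difference-quotient estimate $\bigl\|\tfrac{z(t+h)-z(t)}{h}\bigr\|\leq\bigl\|\tfrac{x(t+h)-x(t)}{h}\bigr\|$. Passing to the limit $h\to 0$, the right-hand side converges to $\|\dot{x}(t)\|$ by differentiability of $x$, while at any point where $\dot{z}(t)$ exists the left-hand side converges in norm to $\|\dot{z}(t)\|$; taking limits through the inequality then gives the claim $\|\dot{z}(t)\|\leq\|\dot{x}(t)\|$.

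The one point that genuinely needs care is the existence of $\dot{z}(t)$, since the composition of a merely nonexpansive map with a differentiable curve is not automatically differentiable. The same chord bound, however, shows directly that $z$ is locally Lipschitz with the same local Lipschitz modulus as $x$, and within the dynamical-system framework $z=J_{\eta}^{\mathcal{A}}x$ inherits differentiability from the regularity of the strong solution (through $\mu=\mathcal{A}_{\eta}x=\tfrac{1}{\eta}(x-z)$). I would therefore state the inequality at points of differentiability of $z$, where the limit passage above is immediate, and treat this Lipschitz-to-derivative transfer as the only step requiring a comment rather than a computation.
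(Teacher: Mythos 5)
Your proof is correct and takes essentially the same route as the paper's: both arguments reduce the claim to the chord bound $\|z(s)-z(t)\|\leq\|x(s)-x(t)\|$ (i.e., nonexpansiveness of $J_{\eta}^{\mathcal{A}}$), then pass to the limit in the difference quotient, and close with the same remark that $z$ is locally Lipschitz and hence differentiable almost everywhere. The only (minor) difference is how nonexpansiveness is obtained: you invoke the averagedness constant $\theta=\frac{\eta}{2(\rho+\eta)}\in(0,1)$ from Proposition \ref{z}-(i) together with the decomposition $(1-\theta)Id+\theta N$, whereas the paper re-derives the bound directly from the $\rho$-comonotonicity inequality applied to the pairs $(z(t),\mathcal{A}_{\eta}x(t))$ and $(z(s),\mathcal{A}_{\eta}x(s))$ in $\mbox{gra}\mathcal{A}$, using $1+\frac{2\rho}{\eta}>0$.
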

\begin{proof}
By Proposition $\ref{z}$-$(ii)$, we know $\mathcal{A}_{\eta}x(t)\in \mathcal{A}(z(t))$, $\mathcal{A}_{\eta}x(s)\in \mathcal{A}(z(s))$. In view of the $\rho-$comonotonicity of $\mathcal{A}$, we have 
$$\langle z(s)-z(t), \frac{x(s)-z(s)}{\eta}- \frac{x(t)-z(t)}{\eta}\rangle\geq\rho \|\frac{x(s)-z(s)}{\eta}- \frac{x(t)-z(t)}{\eta}\|^2.$$
Equivalently, 
$$\langle z(s)-z(t), x(s)-x(t)-(z(s)-z(t))\rangle\geq\frac{\rho}{\eta} \|x(s)-x(t)-(z(s)-z(t))\|^2.$$
This yields
\begin{eqnarray*}
&&\langle z(s)-z(t), x(s)-x(t)\rangle-\|z(s)-z(t)\|^2\\
&\geq&\frac{\rho}{\eta} \|x(s)-x(t)\|^2+\frac{\rho}{\eta} \|z(s)-z(t)\|^2\\
&&-\frac{2\rho}{\eta} \langle z(s)-z(t), x(s)-x(t)\rangle,
\end{eqnarray*}
which implies 
\begin{eqnarray*}
&&\frac{\rho}{\eta}\|x(s)-x(t)\|^2+(1+\frac{\rho}{\eta})\|z(s)-z(t)\|^2\\
&\leq&(1+\frac{2\rho}{\eta})\langle z(s)-z(t), x(s)-x(t)\rangle\\
&\leq&\frac{1+\frac{2\rho}{\eta}}{2}\|z(s)-z(t)\|^2+\frac{1+\frac{2\rho}{\eta}}{2}\|x(s)-x(t)\|^2,
\end{eqnarray*}
where $1+\frac{2\rho}{\eta}>0$ is due to $\eta>\max\{-2\rho,0\}$.\\
After simplification, we get the following formula
$$\|z(s)-z(t)\|^2\leq \|x(s)-x(t)\|^2.$$
Dividing by $s-t$ with $s\neq t$, and letting $s$ tend to $t$, we deduce that
$$\|\dot{z}(t)\|\leq \|\dot{x}(t)\|,$$
where according to Proposition $\ref{z}$-$(i)$, we know the resolvent $J_{\eta}^{\mathcal{A}}$	 is nonexpansive, thus, $z(t)$ is differentiable almost everywhere with respect to $t$.
\end{proof}

\begin{definition} (See \cite [Definition 2] {Bot})
We say that $x:[t_0,+\infty)\longrightarrow \mathcal{H}$ is a strong global solution of $(\ref{DS})$ if the following properties are satisfied:
\begin{itemize}
\item[(i)] $x$, $\dot{x}:[t_0,+\infty)\longrightarrow \mathcal{H}$ are locally absolutely continuous, in other words, absolutely continuous on each interval $[t_0,b]$ for $0<b<+\infty$;
\item[(ii)] $\mu(t)=\mathcal{A}_{\eta}x(t)$ and $\ddot{x}(t)+\dot{\mu}(t)+\frac{\alpha}{t}\dot{x}(t)+\frac{\beta}{t}\mu(t)=0$ for almost every $t\in[t_0,+\infty)$;
\item[(iii)] $x(t_0)=x_0$ and $\dot{x}(t_0)=v_0$.	
\end{itemize}	
\end{definition}

\begin{lemma} (See \cite [Proposition 6.2.1] {Haraux})\label{uniq}
Let $\mathbb{X}$ be a Banach space and $f:[t_0,+\infty) \times \mathbb{X}\rightarrow \mathbb{X}$	be a function. Suppose $f$ satisfies the following property:
\begin{itemize}
\item[(i)] $f(t,\cdot):\mathbb{X}\rightarrow\mathbb{X}$ is continuous and 
$$\|f(t,x)-f(t,y)\|\leq M(t,\|x\|+\|y\|)\|x-y\|,~~\forall x,y \in \mathbb{X},$$
$$M(t,r)\in L_{loc}^{1}([t_0,+\infty)),\forall r\in \mathbb{R}_{+};$$
for almost all $t\in [t_0,+\infty)$, where $L_{loc}^{1}([t_0,+\infty))$ denotes the family of locally integrable functions on $[t_0,+\infty)$;
\item[(ii)] for every $x\in \mathbb{X}$, $f(t,x)\in L_{loc}^{1}([t_0,+\infty))$;
\item[(iii)] $f(t,\cdot): \mathbb{X}\rightarrow \mathbb{X}$ satisfies
$$\|f(t,x)\|\leq P(t)(1+\|x\|)~~\mbox{and}~~P(t)\in L_{loc}^{1}([t_0,+\infty))$$
for almost all $t\in [t_0,+\infty)$.
\end{itemize}
Then, for
$$\frac{d}{dt}x(t)=f(t,x(t)),~~~~x(t_0)=x_0,$$
there exists a unique global trajectory $x: [t_0, +\infty)\rightarrow \mathbb{X}$. 
\end{lemma}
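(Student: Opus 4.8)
The plan is to treat this as a Carath\'eodory-type existence and uniqueness result and to establish it in three stages: local existence by a fixed-point argument, uniqueness by a Gr\"onwall estimate, and global extension by an a priori bound. First I would reformulate the Cauchy problem $\dot{x}(t)=f(t,x(t))$, $x(t_0)=x_0$ as the integral equation
$$x(t)=x_0+\int_{t_0}^{t}f(s,x(s))\,ds,$$
with solutions understood in the locally absolutely continuous sense, which is the natural class here since hypotheses (ii)--(iii) only control $f$ in an integrable (not pointwise continuous) fashion in $t$.

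For local existence I would fix a radius $R>\|x_0\|$ and a short interval $[t_0,t_0+\tau]$, and consider the operator $\Phi$ sending $x(\cdot)$ to the right-hand side above, acting on the closed ball of radius $R$ in $C([t_0,t_0+\tau];\mathbb{X})$. Hypothesis (iii) gives
$$\|\Phi(x)(t)-x_0\|\leq\int_{t_0}^{t}P(s)(1+\|x(s)\|)\,ds\leq (1+R)\int_{t_0}^{t_0+\tau}P(s)\,ds,$$
and since $P\in L_{loc}^{1}([t_0,+\infty))$ this is made smaller than $R-\|x_0\|$ by shrinking $\tau$, so $\Phi$ maps the ball into itself. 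For the contraction property I would use hypothesis (i): for $x,y$ in the ball we have $\|x(s)\|+\|y(s)\|\leq 2R$, so, using that the modulus $M(s,\cdot)$ is nondecreasing (as is standard for such bounds),
$$\|\Phi(x)(t)-\Phi(y)(t)\|\leq\int_{t_0}^{t}M(s,\|x(s)\|+\|y(s)\|)\,\|x(s)-y(s)\|\,ds\leq\int_{t_0}^{t}M(s,2R)\,\|x(s)-y(s)\|\,ds.$$
Because $M(\cdot,2R)\in L_{loc}^{1}([t_0,+\infty))$, equipping $C([t_0,t_0+\tau];\mathbb{X})$ with the Bielecki-type weighted norm $\|x\|_{k}=\sup_{t}e^{-k\int_{t_0}^{t}M(s,2R)\,ds}\|x(t)\|$ with $k>1$ turns $\Phi$ into a genuine contraction on the whole local interval, and the Banach fixed-point theorem yields a unique local solution.

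Uniqueness on any common interval follows from the same estimate: if $x$ and $y$ both solve the problem with $x(t_0)=y(t_0)$, then $\|x(t)-y(t)\|\leq\int_{t_0}^{t}M(s,\|x(s)\|+\|y(s)\|)\|x(s)-y(s)\|\,ds$ with locally integrable kernel, and Gr\"onwall's inequality forces $x\equiv y$. The crucial step for globality is the a priori bound coming from (iii):
$$\|x(t)\|\leq\|x_0\|+\int_{t_0}^{t}P(s)(1+\|x(s)\|)\,ds\quad\Longrightarrow\quad 1+\|x(t)\|\leq(1+\|x_0\|)\exp\!\left(\int_{t_0}^{t}P(s)\,ds\right),$$
again by Gr\"onwall. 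Since $P\in L_{loc}^{1}([t_0,+\infty))$, the right-hand side is finite on every bounded interval, so the solution stays bounded and cannot blow up in finite time. Patching the local solutions into a maximal one and ruling out a finite maximal time $T^{*}$ -- the bound keeps $x$ bounded and makes $f(\cdot,x(\cdot))$ integrable near $T^{*}$, so $x$ extends continuously past $T^{*}$, contradicting maximality -- delivers the global solution on $[t_0,+\infty)$.

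The main obstacle I anticipate is the local contraction step in this Carath\'eodory setting, where $f$ is merely integrable in time and the Lipschitz modulus $M(t,\cdot)$ depends on the norms of the arguments. The care needed is to confine the analysis to a bounded ball so that $M(s,\|x(s)\|+\|y(s)\|)$ is dominated by the fixed integrable function $s\mapsto M(s,2R)$, and then to use the exponential weighting to secure a contraction on the full local interval at once rather than repeatedly shrinking $\tau$; reconciling this with the matching self-mapping estimate from (iii) is the technically delicate point. Once local well-posedness is secured, the uniqueness argument and the Gr\"onwall a priori bound are routine.
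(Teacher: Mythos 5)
This lemma is not proved in the paper at all: it is imported verbatim as a known result, namely the Cauchy--Lipschitz--Picard theorem in the Carath\'eodory setting, cited to Haraux's book (Proposition 6.2.1), and the authors only \emph{apply} it in Theorem 3.1. So there is no in-paper proof to compare against; your proposal has to stand on its own. On its own terms it is the standard and essentially correct argument: integral reformulation in the class of locally absolutely continuous curves, local existence by the Banach fixed point theorem on a ball with a Bielecki weight built from $\int_{t_0}^{t}M(s,2R)\,ds$, uniqueness and the a priori bound $1+\|x(t)\|\leq(1+\|x_0\|)\exp\bigl(\int_{t_0}^{t}P(s)\,ds\bigr)$ by Gr\"onwall, and global extension because the bound precludes finite-time blow-up. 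Two small points deserve attention. First, you invoke monotonicity of $M(t,\cdot)$ in its second argument to dominate $M(s,\|x(s)\|+\|y(s)\|)$ by $M(s,2R)$; this is not literally in the statement as reproduced here, though it is part of Haraux's formulation and is harmless to assume (and is satisfied in the paper's application, where the Lipschitz modulus $N(t)$ does not depend on $\|x\|+\|y\|$ at all). Second, hypothesis (ii) is what you need, together with continuity in $x$ from (i), to conclude that $s\mapsto f(s,x(s))$ is measurable for a continuous curve $x(\cdot)$, so that $\Phi$ is well defined; your write-up uses this silently. Neither point is a gap in substance, and the overall plan is the proof one would find in the cited reference.
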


\begin{lemma} (See \cite [Lemma A.5] {AttouchP}) \label{w}
Let $\omega, \eta:[t_0,+\infty)\rightarrow [0,+\infty)$ be absolutely continuous functions such that $\eta\notin L^{1}(t_0,+\infty)$,
$$\int_{t_0}^{+\infty}\omega(t)\eta(t)dt<+\infty,$$
and $|\dot{\omega}(t)|\leq \eta(t)$ for almost every $t>t_0$. Then $\lim_{t\rightarrow +\infty}\omega(t)=0$. 
\end{lemma}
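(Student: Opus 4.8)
The plan is to avoid working with $\omega$ directly---whose derivative $\dot\omega$ is controlled only by the non-integrable function $\eta$---and instead to study the squared function $\omega^2$, whose derivative carries an extra factor $\omega$ that is exactly what makes it integrable. Concretely, since $\omega$ is absolutely continuous on every compact subinterval, so is $\omega^2$, and $\frac{d}{dt}\omega^2(t)=2\omega(t)\dot\omega(t)$ for almost every $t>t_0$. The hypothesis $|\dot\omega(t)|\le\eta(t)$ then gives the pointwise bound $\left|\frac{d}{dt}\omega^2(t)\right|\le 2\omega(t)\eta(t)$, and the finiteness of $\int_{t_0}^{+\infty}\omega(t)\eta(t)\,dt$ shows that $\frac{d}{dt}\omega^2\in L^1(t_0,+\infty)$.

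First I would deduce that $\omega^2$ converges at infinity. By the fundamental theorem of calculus for absolutely continuous functions, $\omega^2(t)=\omega^2(t_0)+\int_{t_0}^{t}2\omega(s)\dot\omega(s)\,ds$; since the integrand is absolutely integrable on $(t_0,+\infty)$, the function $t\mapsto\int_{t_0}^{t}2\omega(s)\dot\omega(s)\,ds$ converges as $t\to+\infty$ (the tail $\int_{a}^{b}2\omega\eta\,ds$ tends to $0$ as $a,b\to+\infty$ and dominates $\left|\int_{a}^{b}2\omega\dot\omega\,ds\right|$). Hence $\lim_{t\to+\infty}\omega^2(t)=\ell$ exists for some $\ell\ge 0$. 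Because $\omega\ge 0$, continuity of the square root yields $\lim_{t\to+\infty}\omega(t)=\sqrt{\ell}$.

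It remains to show $\ell=0$, and here the assumption $\eta\notin L^{1}(t_0,+\infty)$ enters through a contradiction argument. If $\sqrt{\ell}>0$, then there is some $T\ge t_0$ with $\omega(t)\ge\frac{\sqrt{\ell}}{2}$ for all $t\ge T$, whence $\int_{T}^{+\infty}\omega(t)\eta(t)\,dt\ge\frac{\sqrt{\ell}}{2}\int_{T}^{+\infty}\eta(t)\,dt=+\infty$, contradicting $\int_{t_0}^{+\infty}\omega\eta\,dt<+\infty$. Therefore $\ell=0$ and $\omega(t)\to 0$. The only genuinely delicate point is the first step---establishing that $\omega^2$ possesses an actual limit rather than merely a finite $\liminf$---since $\omega$ itself could a priori oscillate; passing to $\omega^2$ is precisely what converts the weighted integrability hypothesis into an honest $L^{1}$ bound on a derivative and thereby rules out the oscillation.
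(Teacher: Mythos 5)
Your proof is correct: passing to $\omega^2$ turns the weighted integrability hypothesis into an $L^1$ bound on $(\omega^2)'$, which gives existence of $\lim_{t\to+\infty}\omega^2(t)$, and the non-integrability of $\eta$ forces that limit to be zero. The paper itself offers no proof of this lemma (it is quoted from \cite[Lemma A.5]{AttouchP}), and your argument is essentially the standard one used there, so there is nothing to fault.
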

\begin{lemma} (See \cite [Lemma 5.1] {NL}) \label{exists}
Suppose that $F:[0,+\infty)\rightarrow \mathbb{R}$ is locally absolutely continuous and bounded below and that there exist $G\in L^{1}([0,+\infty)$ such that for almost every $t\in [0,+\infty)$
$$\frac{d}{dt}F(t)\leq G(t).$$
Then there exists $\lim_{t\rightarrow +\infty}F(t)\in \mathbb{R}$.	
\end{lemma}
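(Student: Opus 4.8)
The plan is to reduce the claim to the elementary fact that a non-increasing function that is bounded below has a finite limit. Since $G\in L^{1}([0,+\infty))$, the function $t\mapsto\int_{0}^{t}G(s)\,ds$ converges as $t\to+\infty$ to the finite number $I:=\int_{0}^{+\infty}G(s)\,ds$. I would therefore introduce the auxiliary function
$$\Phi(t):=F(t)-\int_{0}^{t}G(s)\,ds,\qquad t\in[0,+\infty).$$
Because $F$ is locally absolutely continuous and $t\mapsto\int_{0}^{t}G(s)\,ds$ is absolutely continuous (being the primitive of an $L^{1}$ function), $\Phi$ is locally absolutely continuous as well, and by the fundamental theorem of calculus for absolutely continuous functions its derivative is $\dot{\Phi}(t)=\dot{F}(t)-G(t)$ for almost every $t$.

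The first key step is monotonicity: the hypothesis $\dot{F}(t)\leq G(t)$ a.e. yields $\dot{\Phi}(t)\leq 0$ a.e., so that for $0\leq s\leq t$ one has $\Phi(t)-\Phi(s)=\int_{s}^{t}\dot{\Phi}(\tau)\,d\tau\leq 0$; hence $\Phi$ is non-increasing on $[0,+\infty)$. The second key step is a lower bound: since $\int_{0}^{t}G(s)\,ds\leq\int_{0}^{+\infty}|G(s)|\,ds=\|G\|_{L^{1}}<+\infty$ and $F$ is bounded below by some $m\in\mathbb{R}$, we obtain $\Phi(t)\geq m-\|G\|_{L^{1}}>-\infty$ for every $t$. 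A non-increasing function that is bounded below converges, so $\lim_{t\to+\infty}\Phi(t)=:L\in\mathbb{R}$ exists.

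Finally, I would transfer this back to $F$ itself: writing $F(t)=\Phi(t)+\int_{0}^{t}G(s)\,ds$ and letting $t\to+\infty$, the right-hand side tends to $L+I\in\mathbb{R}$, whence $\lim_{t\to+\infty}F(t)=L+I$ exists and is finite, as required. There is no genuine obstacle in this argument; the only points demanding a little care are the justification that $\Phi$ inherits local absolute continuity so that the fundamental theorem of calculus applies to it, and the role played by $G\in L^{1}$, which simultaneously keeps $\int_{0}^{t}G$ bounded (securing the lower bound for $\Phi$) and guarantees that its limit $I$ is finite (allowing the limit to be passed from $\Phi$ back to $F$).
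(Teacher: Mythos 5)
The paper does not prove this lemma itself; it is quoted verbatim as a known result from \cite[Lemma 5.1]{NL}, so there is no in-paper proof to compare against. Your argument is correct and is exactly the standard proof of that cited result: subtracting the absolutely continuous primitive $t\mapsto\int_{0}^{t}G(s)\,ds$ yields a non-increasing function $\Phi$ bounded below (hence convergent), and since $G\in L^{1}$ makes $\int_{0}^{t}G(s)\,ds$ converge as well, $F=\Phi+\int_{0}^{\cdot}G$ converges.
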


\begin{lemma}\label{op-l} (See \cite [Lemma A.2] {AttouchPR})\label{x-x}
Let $\mathcal{H}$ be a Hilbert space. Let $x:[t_0, +\infty)\rightarrow \mathcal{H}$ a continuously differentiable function satisfying $q(t)+\frac{t}{\sigma}\dot{q}(t)\rightarrow L$, $t\rightarrow +\infty$, with $\sigma>0$ and $L\in \mathcal{H}$. Then $q(t)\rightarrow L$, $t\rightarrow +\infty$. 
\end{lemma}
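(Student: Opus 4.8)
The plan is to read the hypothesis as a first-order linear ODE for $q$, solve it explicitly through an integrating factor, and then reduce the conclusion to a weighted-averaging (Ces\`aro-type) limit. Write $g(t):=q(t)+\frac{t}{\sigma}\dot q(t)$, which is continuous since $q$ is continuously differentiable and which satisfies $g(t)\to L$ by assumption. Multiplying the defining relation by $\sigma/t$ puts it in the standard form $\dot q(t)+\frac{\sigma}{t}q(t)=\frac{\sigma}{t}g(t)$, whose integrating factor is $t^{\sigma}$ because $\int\frac{\sigma}{t}\,dt=\sigma\ln t$. First I would check that $\frac{d}{dt}\bigl(t^{\sigma}q(t)\bigr)=t^{\sigma}\bigl(\dot q(t)+\frac{\sigma}{t}q(t)\bigr)=\sigma t^{\sigma-1}g(t)$ and integrate from $t_0$ to $t$ to obtain the closed form
\begin{equation*}
q(t)=\frac{t_0^{\sigma}q(t_0)}{t^{\sigma}}+\frac{\sigma}{t^{\sigma}}\int_{t_0}^{t}s^{\sigma-1}g(s)\,ds .
\end{equation*}
Since $\sigma>0$, the first term tends to $0$ as $t\to+\infty$, so the whole problem reduces to showing that the weighted average $\frac{\sigma}{t^{\sigma}}\int_{t_0}^{t}s^{\sigma-1}g(s)\,ds$ converges to $L$.

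For this averaging step I would exploit the exact identity $\frac{\sigma}{t^{\sigma}}\int_{t_0}^{t}s^{\sigma-1}\,ds=1-\frac{t_0^{\sigma}}{t^{\sigma}}\to 1$, which reproduces the constant $L$ in the limit, so that it suffices to control the remainder $R(t):=\frac{\sigma}{t^{\sigma}}\int_{t_0}^{t}s^{\sigma-1}\bigl(g(s)-L\bigr)\,ds$. Fixing $\varepsilon>0$ and choosing $T\ge t_0$ with $\|g(s)-L\|<\varepsilon$ for all $s\ge T$, I would split the integral at $T$: the tail is bounded in norm by $\varepsilon\,\frac{\sigma}{t^{\sigma}}\int_{T}^{t}s^{\sigma-1}\,ds\le\varepsilon$, while the head over $[t_0,T]$ has a numerator independent of $t$ and is therefore annihilated by the prefactor $t^{-\sigma}\to0$. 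Passing to the limit gives $\limsup_{t\to+\infty}\|R(t)\|\le\varepsilon$ for every $\varepsilon>0$, hence $R(t)\to0$ and $q(t)\to L$.

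The argument is essentially routine; the only delicate point is this last averaging step, where one must separate the behaviour of $g$ near infinity from its bounded but otherwise arbitrary values on the fixed initial segment $[t_0,T]$. In the Hilbert-space setting this causes no additional trouble, since the split estimate is carried out directly in the norm and needs no component-wise reduction. One could alternatively invoke a vector-valued Stolz--Ces\`aro (L'H\^opital) principle applied to the quotient with numerator $\int_{t_0}^{t}\sigma s^{\sigma-1}g(s)\,ds$ and denominator $t^{\sigma}$, but the explicit split above is self-contained and avoids any hypothesis on differentiability of the limit.
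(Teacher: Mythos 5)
Your proof is correct. The paper does not prove this lemma at all --- it simply cites it as Lemma A.2 of Attouch--Peypouquet--Redont \cite{AttouchPR} --- and your argument (integrating factor $t^{\sigma}$, explicit representation of $q$, then the $\varepsilon$-split of the weighted average $\frac{\sigma}{t^{\sigma}}\int_{t_0}^{t}s^{\sigma-1}(g(s)-L)\,ds$ into a vanishing head and an $\varepsilon$-small tail) is precisely the standard proof given in that reference. All steps check out, including the norm estimates carried out directly in the Hilbert space.
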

\begin{lemma} (See \cite [Lemma 5.3] {NL}) \label{OP}
Let $S$ be a nonempty subset of $\mathcal{H}$, and $x:[t_0,+\infty)\rightarrow \mathcal{H}$ a map. Assume that 
\begin{itemize}
\item[(i)] for every $z\in S$, $\lim\limits_{t\rightarrow +\infty}\|x(t)-z\|$ exists;
\item[(ii)]	every weak sequential limit point of  $x(t)$, as $t\rightarrow +\infty$, belongs to $S$.
\end{itemize}
Then $x(t)$ converges weakly as $t\rightarrow +\infty$ to a point in $S$.	
\end{lemma}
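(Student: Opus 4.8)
The plan is to run the continuous-time Opial argument. The statement asserts weak convergence of the whole trajectory, so the core task is to upgrade the mere existence of weak cluster points (guaranteed by boundedness) together with their membership in $S$ (hypothesis (ii)) into the fact that there is only \emph{one} such cluster point, after which weak convergence of $x(\cdot)$ follows.

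First I would establish boundedness of $x(\cdot)$. Since $S\neq\emptyset$, fix any $z_0\in S$; by hypothesis (i) the limit $\lim_{t\rightarrow +\infty}\|x(t)-z_0\|$ exists and is finite, so $t\mapsto\|x(t)-z_0\|$ is bounded and hence $x(\cdot)$ is bounded in $\mathcal{H}$. Because $\mathcal{H}$ is a Hilbert space (in particular reflexive), every sequence $x(t_n)$ with $t_n\rightarrow +\infty$ admits a weakly convergent subsequence, so the set of weak sequential cluster points of $x(t)$ is nonempty; by (ii), each such point lies in $S$.

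Next comes the uniqueness step, which is the heart of the matter. Suppose $x_1$ and $x_2$ are two weak sequential cluster points, realized along $t_n\rightarrow +\infty$ and $s_n\rightarrow +\infty$ with $x(t_n)\rightharpoonup x_1$ and $x(s_n)\rightharpoonup x_2$; both belong to $S$. Expanding the squared norms and subtracting gives, for every $t$,
\[
\|x(t)-x_1\|^2-\|x(t)-x_2\|^2 = -2\langle x(t),\,x_1-x_2\rangle + \|x_1\|^2-\|x_2\|^2 .
\]
By (i) applied to $z=x_1$ and $z=x_2$, the left-hand side has a limit as $t\rightarrow +\infty$, hence so does $t\mapsto\langle x(t),\,x_1-x_2\rangle$. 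Passing to this limit along $t_n$ (using $x(t_n)\rightharpoonup x_1$) and along $s_n$ (using $x(s_n)\rightharpoonup x_2$) forces $\langle x_1,\,x_1-x_2\rangle=\langle x_2,\,x_1-x_2\rangle$, i.e. $\|x_1-x_2\|^2=0$, so $x_1=x_2$. Thus the weak sequential cluster point is unique; call it $\bar{x}\in S$.

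The hard part, and the place to be careful in the continuous-time setting, is concluding that a bounded map possessing a unique weak sequential cluster point is genuinely weakly convergent, rather than merely having all its subsequential weak limits coincide. I would argue by contradiction: if $x(t)\not\rightharpoonup\bar{x}$, then there exist $\varphi\in\mathcal{H}$, $\varepsilon>0$ and $t_n\rightarrow +\infty$ with $|\langle x(t_n)-\bar{x},\,\varphi\rangle|\geq\varepsilon$; by boundedness a further subsequence converges weakly to some $y$, which is then a cluster point and hence equals $\bar{x}$ by uniqueness, contradicting $|\langle y-\bar{x},\,\varphi\rangle|\geq\varepsilon$. Therefore $x(t)\rightharpoonup\bar{x}\in S$ as $t\rightarrow +\infty$, which completes the proof.
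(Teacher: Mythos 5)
Your proof is correct and is precisely the standard continuous-time Opial argument (boundedness from (i), uniqueness of weak cluster points via the identity $\|x(t)-x_1\|^2-\|x(t)-x_2\|^2=-2\langle x(t),x_1-x_2\rangle+\|x_1\|^2-\|x_2\|^2$, then a contradiction argument to pass from a unique cluster point to weak convergence). The paper itself gives no proof — it cites the lemma from Abbas--Attouch--Svaiter — and your argument matches the one used there, with the final upgrade step handled carefully and correctly.
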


The discrete version of Lemma \ref{op-l}  is often referred to as Opial's Lemma \cite{Opial}.

\begin{lemma} \label{op}
Let $S$ be a nonempty subset of $\mathcal{H}$, and $\{x_k\}$ a sequence of elements of $\mathcal{H}$. Assume that
\begin{itemize}
\item[(i)] for every $z\in S$, $\lim\limits_{k\rightarrow +\infty}\|x_k-z\|$ exists;
\item[(ii)] every weak sequential limit point of $\{x_k\}$, as $k\rightarrow +\infty$, belongs to $S$.
\end{itemize}
Then $x_k$ converges weakly as $k\rightarrow +\infty$ to a point in $S$.	
\end{lemma}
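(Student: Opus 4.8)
The plan is to follow the classical argument for Opial's Lemma, adapting the continuous statement of Lemma \ref{OP} to sequences. First I would use hypothesis (i) to establish that $\{x_k\}$ is bounded: fixing any $z\in S$, the limit $\lim_{k\to+\infty}\|x_k-z\|$ exists and is finite, so the scalar sequence $(\|x_k-z\|)_k$ is bounded, and hence so is $\{x_k\}$. Since $\mathcal{H}$ is a Hilbert space, bounded sequences admit weakly convergent subsequences, so $\{x_k\}$ possesses at least one weak sequential limit point; by (ii) every such point lies in $S$, so $S$ does contain a weak cluster point of $\{x_k\}$.

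The heart of the proof is to show that this weak cluster point is unique. Suppose $x_1^*$ and $x_2^*$ are two weak sequential limit points of $\{x_k\}$, realized respectively as weak limits of subsequences $\{x_{k_j}\}$ and $\{x_{k_l}\}$; by (ii) both belong to $S$. I would expand and subtract the squared distances to obtain, for every $k$,
\begin{equation*}
\|x_k-x_1^*\|^2-\|x_k-x_2^*\|^2 = 2\langle x_k,\, x_2^*-x_1^*\rangle + \|x_1^*\|^2-\|x_2^*\|^2.
\end{equation*}
By (i) both terms on the left-hand side converge as $k\to+\infty$, so the full sequence $\langle x_k,\, x_2^*-x_1^*\rangle$ converges. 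Passing to the limit along $\{x_{k_j}\}$ yields the value $\langle x_1^*,\, x_2^*-x_1^*\rangle$, while passing along $\{x_{k_l}\}$ yields $\langle x_2^*,\, x_2^*-x_1^*\rangle$; since the whole sequence converges, these two subsequential limits must agree, giving $\langle x_1^*-x_2^*,\, x_2^*-x_1^*\rangle=0$, i.e. $\|x_1^*-x_2^*\|^2=0$ and thus $x_1^*=x_2^*$.

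Finally, having shown that $\{x_k\}$ is bounded and admits a single weak sequential limit point $x^*\in S$, I would conclude that the entire sequence converges weakly to $x^*$. This rests on the standard fact that a bounded sequence in a Hilbert space with a unique weak cluster point converges weakly to it: if some subsequence remained outside a weak neighbourhood of $x^*$, its boundedness would furnish a further weakly convergent sub-subsequence whose limit, being a weak cluster point of $\{x_k\}$, would have to equal $x^*$, contradicting its staying away from $x^*$.

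I expect the main obstacle to be the uniqueness step, and specifically the observation that hypothesis (i) forces the scalar sequence $\langle x_k,\, x_2^*-x_1^*\rangle$ to converge; this convergence is exactly what pins the two subsequential weak limits together. The boundedness argument and the final passage from a unique cluster point to full weak convergence are routine consequences of weak compactness of bounded sets in $\mathcal{H}$.
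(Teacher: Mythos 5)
Your proof is correct and is precisely the classical Opial argument; the paper itself states Lemma \ref{op} without proof, simply attributing it to Opial's original work, and the argument it implicitly relies on is the same one you give (boundedness from (i), uniqueness of the weak cluster point via convergence of $\langle x_k, x_2^*-x_1^*\rangle$, then weak compactness). No gaps: the algebraic identity, the two subsequential limits, and the final passage from a unique weak cluster point of a bounded sequence to weak convergence are all handled correctly.
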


\begin{lemma} (See \cite [Fact 2.5] {Ou}) \label{des}
Let $\{a_k\}_{k\in \mathbb{N}}$ and $\{b_k\}_{k\in \mathbb{N}}$ be sequences in $\mathbb{R}_{+}$ such that $\sum_{k\in \mathbb{N}}b_k< +\infty$ and 
$$(\forall \in \mathbb{N}) ~~~~a_{k+1}\leq a_k+b_k.$$	
Then $\lim_{k\rightarrow +\infty}a_k\in \mathbb{R}_{+}$.
\end{lemma}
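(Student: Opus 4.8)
The plan is to reduce the one-sided recursion $a_{k+1}\le a_k+b_k$ to the convergence of a suitable \emph{monotone} auxiliary sequence, which then immediately forces $\{a_k\}$ itself to converge. The key observation is that although the hypothesis only controls how much $a_{k+1}$ can \emph{exceed} $a_k$, the summability of $\{b_k\}$ lets us absorb the one-sided defect by subtracting a tail of the series $\sum_k b_k$.

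First I would record the boundedness that follows by telescoping: summing $a_{j+1}-a_j\le b_j$ for $j=0,\dots,k-1$ gives
$$a_k\le a_0+\sum_{j=0}^{k-1}b_j\le a_0+\sum_{j=0}^{\infty}b_j<+\infty,$$
so $\{a_k\}$ is bounded above, while it is bounded below by $0$ since every term lies in $\mathbb{R}_+$. Next I would set the tail $s_k:=\sum_{j=k}^{\infty}b_j$, which is well defined and satisfies $s_k\to 0$ as $k\to+\infty$ because $\sum_k b_k<+\infty$, and define $w_k:=a_k+s_k$. Then
$$w_{k+1}=a_{k+1}+s_{k+1}\le a_k+b_k+s_{k+1}=a_k+s_k=w_k,$$
so $\{w_k\}$ is nonincreasing; since $w_k\ge a_k\ge 0$ it is bounded below, hence it converges, say $w_k\to \ell$ with $\ell\ge 0$. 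Finally $a_k=w_k-s_k\to \ell-0=\ell$, and $\ell\ge 0$ because each $a_k\ge 0$, which is precisely the assertion $\lim_{k\to\infty}a_k\in\mathbb{R}_+$.

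There is no genuine obstacle here: the only point requiring care is that the recursion yields merely an upper bound on the increments, so one cannot deduce convergence from monotonicity of $\{a_k\}$ directly (indeed $\{a_k\}$ need not be monotone, and the naive Cauchy estimate only controls $a_m-a_n$ from above for $m>n$). The device of adding the summable tail $s_k$ is exactly what repairs this, converting the one-sided estimate into honest monotonicity of $\{w_k\}$. An equivalent route, should one prefer to avoid the auxiliary sequence, is to choose a subsequence realizing $\liminf_k a_k=:\ell$ and exploit $a_m\le a_{k_i}+s_{k_i}$ for $m>k_i$ to conclude $\limsup_k a_k\le \ell$, so that $\liminf_k a_k=\limsup_k a_k$ and the limit exists.
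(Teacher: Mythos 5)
Your proof is correct. The paper does not prove this lemma; it simply cites it as a known fact from the literature (Fact 2.5 of Ouyang), and your argument --- adding the summable tail $s_k=\sum_{j\ge k}b_j$ to obtain the nonincreasing, bounded-below auxiliary sequence $w_k=a_k+s_k$ and then letting $s_k\to 0$ --- is precisely the standard proof of that fact, with no gaps.
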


\section{Existence and uniqueness of solutions}

Let us first establish the existence and uniqueness of the solution trajectory of the Cauchy problem associated to the continuous dynamical system $(\ref{DS})$. In what follows, we always suppose that
$$ ~~~~~~~\mathcal{A} ~\mbox{is}~\rho-\mbox{comonotone}\text{ with }\rho\in \mathbb{R},~~\mbox{zer} \mathcal{A}\neq\emptyset~~\mbox{and}~~ \eta>\max\{-2\rho,0\}.~~~~~~~~~\quad(H)$$
\begin{theorem}\label{theorem1}
Under $(H)$, take $t_0>0$. Then, for any $x_0\in \mathcal{H}$, $x_1 \in \mathcal{H}$, there exists a unique strong global solution $x: [t_0, +\infty) \rightarrow \mathcal{H}$ of the dynamical system $(\ref{DS})$ which satisfies the Cauchy data $x(t_0)=x_0$ and $\dot{x}(t_0)=x_1$.
\end{theorem}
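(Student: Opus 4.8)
The plan is to recast the second-order system (\ref{DS}) as a first-order system in the product space $\mathbb{X}=\mathcal{H}\times\mathcal{H}$ and then invoke the Cauchy--Lipschitz-type result of Lemma \ref{uniq}. The essential difficulty is that (\ref{DS}) contains the term $\dot{\mu}(t)=\frac{d}{dt}\mathcal{A}_{\eta}x(t)$, and $\mathcal{A}_{\eta}$ is merely Lipschitz continuous (Proposition \ref{z}-(i)), not differentiable, so the second-order equation cannot be attacked directly. The remedy is to introduce the auxiliary variable
$$v(t):=\dot{x}(t)+\mu(t)=\dot{x}(t)+\mathcal{A}_{\eta}x(t),$$
which turns the offending term into $\dot{v}(t)$. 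Writing (\ref{DS}) as $\frac{d}{dt}(\dot{x}+\mu)+\frac{\alpha}{t}\dot{x}+\frac{\beta}{t}\mu=0$ and substituting $\dot{x}=v-\mathcal{A}_{\eta}x$, the system (\ref{DS}) becomes equivalent to
\begin{equation*}
\left\{
\begin{array}{rcl}
\dot{x}(t)&=&v(t)-\mathcal{A}_{\eta}x(t),\\
\dot{v}(t)&=&-\frac{\alpha}{t}v(t)+\frac{\alpha-\beta}{t}\mathcal{A}_{\eta}x(t),
\end{array}
\right.
\end{equation*}
with Cauchy data $x(t_0)=x_0$ and $v(t_0)=x_1+\mathcal{A}_{\eta}x_0$.

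Setting $Z=(x,v)$ and $F\big(t,(x,v)\big)=\big(v-\mathcal{A}_{\eta}x,\,-\frac{\alpha}{t}v+\frac{\alpha-\beta}{t}\mathcal{A}_{\eta}x\big)$, this reads $\dot{Z}(t)=F(t,Z(t))$, and I would verify the three hypotheses of Lemma \ref{uniq} on $[t_0,+\infty)$ with $t_0>0$. Using that $\mathcal{A}_{\eta}$ is $\frac{2}{\eta}$-Lipschitz, for fixed $t$ one obtains $\|F(t,Z_1)-F(t,Z_2)\|\leq L(t)\|Z_1-Z_2\|$ with $L(t)=c_1+c_2/t$ for suitable constants; since $t_0>0$, $L$ is continuous, hence belongs to $L^1_{loc}([t_0,+\infty))$, which yields (i) (with $M(t,r)\equiv L(t)$). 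Condition (ii) is immediate because $t\mapsto F(t,Z)$ is continuous on $[t_0,+\infty)$ for each fixed $Z$. For the linear-growth condition (iii) I would pick $x^*\in\mbox{zer}\,\mathcal{A}$ (nonempty by $(H)$), note $\mathcal{A}_{\eta}x^*=0$ by Remark \ref{av}, and estimate $\|\mathcal{A}_{\eta}x\|\leq\frac{2}{\eta}\|x-x^*\|\leq\frac{2}{\eta}(\|x\|+\|x^*\|)$; this gives $\|F(t,Z)\|\leq P(t)(1+\|Z\|)$ with $P(t)=c_3+c_4/t\in L^1_{loc}([t_0,+\infty))$. Lemma \ref{uniq} then produces a unique global solution $Z=(x,v):[t_0,+\infty)\to\mathbb{X}$.

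It remains to check that the first component $x$ is a strong global solution of (\ref{DS}). Since $x$ and $v$ are locally absolutely continuous and $\mathcal{A}_{\eta}$ is Lipschitz, the map $t\mapsto\mathcal{A}_{\eta}x(t)$ is locally absolutely continuous, so $\dot{x}=v-\mathcal{A}_{\eta}x$ is locally absolutely continuous as well, giving property (i). Setting $\mu=\mathcal{A}_{\eta}x$ and differentiating $v=\dot{x}+\mu$ almost everywhere gives $\dot{v}=\ddot{x}+\dot{\mu}$; substituting $\dot{v}=-\frac{\alpha}{t}v+\frac{\alpha-\beta}{t}\mu$ and $v=\dot{x}+\mu$ recovers $\ddot{x}+\dot{\mu}+\frac{\alpha}{t}\dot{x}+\frac{\beta}{t}\mu=0$ for a.e.\ $t$, i.e.\ property (ii), while $x(t_0)=x_0$ and $\dot{x}(t_0)=v(t_0)-\mathcal{A}_{\eta}x_0=x_1$ give (iii). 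Uniqueness of $x$ follows from that of $Z$ together with the fact that $(x,\dot{x})\mapsto(x,v)$ is a bijection. The step I expect to require the most care is the reformulation itself---recognizing that the nonsmooth term $\dot{\mu}$ can be absorbed by the substitution $v=\dot{x}+\mathcal{A}_{\eta}x$---and the a.e.\ chain-rule argument confirming that the recovered $x$ genuinely solves (\ref{DS}); the verification of the conditions of Lemma \ref{uniq} is then routine and rests entirely on the Lipschitz continuity and linear growth of $\mathcal{A}_{\eta}$ guaranteed by $(H)$.
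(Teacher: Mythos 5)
Your proposal is correct and takes essentially the same route as the paper: the paper introduces $y(t)=-\dot{x}(t)-\mathcal{A}_{\eta}x(t)$, which is exactly your auxiliary variable $v$ up to a sign, reduces (\ref{DS}) to the same first-order system in $\mathcal{H}\times\mathcal{H}$, and verifies the three hypotheses of Lemma \ref{uniq} using the $\frac{2}{\eta}$-Lipschitz continuity of $\mathcal{A}_{\eta}$ and a zero $x^*\in\mbox{zer}\,\mathcal{A}$ for the linear-growth bound. Your closing paragraph spelling out why the first component of the recovered $Z$ is a strong global solution of (\ref{DS}) is slightly more explicit than the paper, which simply asserts the equivalence of the two formulations.
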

\begin{proof}
First rewrite the system $(\ref{DS})$ as follows
\begin{equation*}
\left\{
\begin{array}{lcl}
y(t)=-\dot{x}(t)-\mathcal{A}_{\eta}x(t)\\
\dot{y}(t)=\frac{\alpha}{t}\dot{x}(t)+\frac{\beta}{t}\mathcal{A}_{\eta}x(t),
\end{array}
\right.
\end{equation*}	
equivalently,
\begin{equation*}
\left\{
\begin{array}{lcl}
\dot{x}(t)=-\mathcal{A}_{\eta}x(t)-y(t)\\
\dot{y}(t)=\frac{\beta-\alpha}{t}\mathcal{A}_{\eta}x(t)-\frac{\alpha}{t}y(t).
\end{array}
\right.
\end{equation*}
Hence, the system $(\ref{DS})$ can be equivalently written as a first-order dynamical system in the phase space $\mathcal{H}\times\mathcal{H}$ with the Cauchy data $x(t_0)=x_0$ and $y(t_0)=y_0:=-x_1-\mathcal{A}_{\eta}x_0$
\begin{equation*}
\left\{
\begin{array}{lcl}
\dot{Z}(t)=F(t,Z(t))\\
Z(t_0)=(x_0,y_0),
\end{array}
\right.
\end{equation*}	
with
$$Z:[t_0,+\infty)\rightarrow \mathcal{H}\times\mathcal{H}, Z(t)=(x(t),y(t))$$
and
$$F:[t_0,+\infty)\times \mathcal{H}\times\mathcal{H}\rightarrow \mathcal{H}\times\mathcal{H}, F(t,x,y)=(-\mathcal{A}_{\eta}x-y,\frac{\beta-\alpha}{t}\mathcal{A}_{\eta}x-\frac{\alpha}{t}y).$$
We endow $\mathcal{H}\times \mathcal{H}$ with scalar product $\langle (x,y),(\bar{x},\bar{y})\rangle_{\mathcal{H}\times\mathcal{H}}=\langle x,\bar{x}\rangle+\langle y,\bar{y}\rangle$ and corresponding norm $\|(x,y)\|_{\mathcal{H}\times \mathcal{H}}=\sqrt{\|x\|^2+\|y\|^2}$.\\

{\it Step 1:} For arbitrary $(x,y),~(\bar{x},\bar{y})\in \mathcal{H}\times \mathcal{H}$, we make the following estimation
\begin{eqnarray*}
&&\|F(t,x,y)-F(t,\bar{x},\bar{y})\|_{\mathcal{H}\times \mathcal{H}}\\
&=&\sqrt{\|-\mathcal{A}_{\eta}x-y+\mathcal{A}_{\eta}\bar{x}+\bar{y}\|^2+\|\frac{\beta-\alpha}{t}\mathcal{A}_{\eta}x-\frac{\alpha}{t}y-\frac{\beta-\alpha}{t}\mathcal{A}_{\eta}\bar{x}+\frac{\alpha}{t}\bar{y}\|^2}\\
&\leq&\sqrt{2\|\mathcal{A}_{\eta}x-\mathcal{A}_{\eta}\bar{x}\|^2+2\|y-\bar{y}\|^2+2\frac{(\alpha-\beta)^2}{t^2}\|\mathcal{A}_{\eta}x-\mathcal{A}_{\eta}\bar{x}\|^2+2\frac{{\alpha}^2}{t^2}\|y-\bar{y}\|^2}\\
&=&\sqrt{(2+2\frac{(\alpha-\beta)^2}{t^2})\|\mathcal{A}_{\eta}x-\mathcal{A}_{\eta}\bar{x}\|^2+(2+2\frac{{\alpha}^2}{t^2})\|y-\bar{y}\|^2}\\
&\overset{\text{(\ref{Yosida})}}{\leq}&\sqrt{(2+2\frac{(\alpha-\beta)^2}{t^2})\frac{4}{{\eta}^2}\|x-\bar{x}\|^2+(2+2\frac{{\alpha}^2}{t^2})\|y-\bar{y}\|^2}\\
&\leq&\sqrt{(2+2\frac{(\alpha-\beta)^2}{t^2})\frac{4}{{\eta}^2}+2+\frac{2{\alpha}^2}{t^2}}\|(x,y)-(\bar{x},\bar{y})\|\\
&\leq&(\frac{2\sqrt{2}}{\eta}+\sqrt{2}+\frac{2\sqrt{2}|\alpha-\beta|}{t\eta}+\frac{\sqrt{2}\alpha}{t})\|(x,y)-(\bar{x},\bar{y})\|_{\mathcal{H}\times \mathcal{H}}.
\end{eqnarray*}
By employing the notation $N(t):=\frac{2\sqrt{2}}{\eta}+\sqrt{2}+\frac{2\sqrt{2}|\alpha-\beta|}{t\eta}+\frac{\sqrt{2}\alpha}{t},~~\forall t\in [t_0,+\infty)$, we conclude that
$$\|F(t,x,y)-F(t,\bar{x},\bar{y})\|_{\mathcal{H}\times \mathcal{H}}\leq N(t)\|(x,y)-(\bar{x},\bar{y})\|_{\mathcal{H}\times \mathcal{H}}.$$
Hence $F(t,\cdot,\cdot)$ is $N(t)-$Lipschitz continuous for every $t\geq t_0$.
Moreover, for any $t\geq t_0$, by the continuity of $\frac{1}{t}$, we know that $N(\cdot)$ is integrable on $[t_0,T]$ for any $t_0<T<+\infty$. Thus $N(\cdot)\in L_{loc}^{1}([t_0,+\infty)$($N(t)$ is said to be locally integrable in the interval $0\leq t<+\infty$ if it is integrable (in the sense of Lebesgue) in each bounded interval $0\leq t\leq T$).
As $N(\cdot)\in L_{loc}^{1}([t_0,+\infty)$, the Lipschitz constant of $F(t,\cdot,\cdot)$ is local integrable.

{\it Step 2:} For fixed $x,y\in \mathcal{H}$, it holds
\begin{eqnarray*}
&&\int_{t_0}^{T}\|F(t,x,y)\|_{\mathcal{H}\times \mathcal{H}} dt\\
&=& \int_{t_0}^{T}\sqrt{\|\mathcal{A}_{\eta}x+y\|^2+\|\frac{\alpha-\beta}{t}\mathcal{A}_{\eta}x+\frac{\alpha}{t}y\|^2} dt\\
&\leq&\int_{t_0}^{T}\sqrt{2\|y\|^2+\frac{{2\alpha}^2}{t^2}\|y\|^2+(2+\frac{2(\alpha-\beta)^2}{t^2})\|\mathcal{A}_{\eta}x\|^2} dt\\
&\leq&\int_{t_0}^{T}[(\sqrt{2}+\frac{\sqrt{2}\alpha}{t})\|y\|+(\sqrt{2}+\frac{\sqrt{2}|\alpha-\beta|}{t})\|\mathcal{A}_{\eta}x\|] dt.
\end{eqnarray*}
The continuity of $\frac{1}{t}$ yields
$$\int_{t_0}^{T}\|F(t,x,y)\|_{\mathcal{H}\times \mathcal{H}} dt<+\infty,~~~~\forall t_0<T<+\infty.$$

{\it Step 3:} For arbitrary $x^*\in \mbox{zer}{\mathcal{A}}$, by employing Remark \ref{av} and $(\ref{Yosida})$, we consider the following estimation
\begin{eqnarray*}
&&\|F(t,x,y)\|_{\mathcal{H}\times \mathcal{H}}\\
&=& \sqrt{\|\mathcal{A}_{\eta}x+y\|^2+\|\frac{\alpha-\beta}{t}\mathcal{A}_{\eta}x+\frac{\alpha}{t}y\|^2}\\
&\leq& \sqrt{(2+\frac{2{\alpha}^2}{t^2})\|y\|^2+(2+\frac{2(\alpha-\beta)^2}{t^2})\|\mathcal{A}_{\eta}x\|^2}\\
&=& \sqrt{(2+\frac{2{\alpha}^2}{t^2})\|y\|^2+(2+\frac{2(\alpha-\beta)^2}{t^2})\|\mathcal{A}_{\eta}x-\mathcal{A}_{\eta}x^*\|^2}\\
&\leq& \sqrt{(2+\frac{2{\alpha}^2}{t^2})\|y\|^2+(2+\frac{2(\alpha-\beta)^2}{t^2})\frac{4}{{\eta}^2}\|x-x^*\|^2}\\
&\leq& \sqrt{(2+\frac{2{\alpha}^2}{t^2})\|y\|^2+(2+\frac{2(\alpha-\beta)^2}{t^2})\frac{8}{{\eta}^2}\|x\|^2+(2+\frac{2(\alpha-\beta)^2}{t^2})\frac{8}{{\eta}^2}\|x^*\|^2}\\
&\leq& (\frac{4}{\eta}+\frac{4|\alpha-\beta|}{t\eta})\|x^*\|+(\sqrt{2}+\frac{\sqrt{2}\alpha}{t}+\frac{4}{\eta}+\frac{4|\alpha-\beta|}{t\eta})\|(x,y)\|_{\mathcal{H}\times \mathcal{H}}\\
&\leq& P(t)(1+\|(x,y)\|_{\mathcal{H}\times \mathcal{H}}),
\end{eqnarray*}
where $P(t)=(\frac{4}{\eta}+\frac{4|\alpha-\beta|}{t\eta})\|x^*\|+\sqrt{2}+\frac{\sqrt{2}\alpha}{t}+\frac{4}{\eta}+\frac{4|\alpha-\beta|}{t\eta}$. By virtue of the continuity of $\frac{1}{t}$ with respect to $t$, we conclude that $P(t)\in L_{loc}^{1}([t_0,+\infty)$. 
Based on the above statements, the existence and uniqueness of a strong global solution are consequences of the Cauchy-Lipschitz-Picard Theorem (Lemma \ref{uniq}) for the first order dynamical system.
From here, due to the equivalence of the first order dynamical system and $(\ref{DS})$, the conclusion follows.
\end{proof}

\section{Convergence analysis}
In this section we will analyze the convergence properties of the trajectories generated by the dynamical system  $(\ref{DS})$. 

\begin{theorem}\label{bounded}
Under $(H)$, let $x:[t_0,+\infty)\rightarrow \mathcal{H}$ be a trajectory of $(\ref{DS})$, where the parameters satisfy $\alpha\geq\beta+1$ and $\beta>1$. Then we have
\begin{itemize}
\item [(i)] $\int_{t_0}^{+\infty}t\|\dot{x}(t)\|^2dt<+\infty$, $\int_{t_0}^{+\infty} t\|\dot{x}(t)+\mu(t)\|^2 dt <+\infty$, $\int_{t_0}^{+\infty} t\|\mu(t)\|^2 dt <+\infty$ and $\int_{t_0}^{+\infty} t|\langle \dot{x}(t), \mu(t)\rangle| dt <+\infty$.
\item [(ii)] $\lim_{t\rightarrow+\infty}t\|\dot{x}(t)\|=0$, $\lim_{t\rightarrow+\infty}t\|\mu(t)\|=0$ and $\lim_{t\rightarrow+\infty}t\|\dot{x}(t)+\mu(t)\|=0$.
\item [(iii)] for any $x^*\in \mbox{zer} \mathcal{A}$, $\lim_{t\rightarrow+\infty}\|x(t)-x^*\|^2$ exists.
\item [(iv)] $x(\cdot)$ converges weakly, as $t\rightarrow +\infty$, to an element of $\mbox{zer} \mathcal{A}$.
\end{itemize}
\end{theorem}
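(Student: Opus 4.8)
The plan is to run a Lyapunov (energy) analysis anchored at an arbitrary $x^*\in\mbox{zer}\,\mathcal{A}$, and then feed the resulting estimates into Opial's lemma. Throughout I write $\mu=\mathcal{A}_\eta x$ and $v=\dot{x}+\mu$, so that $(\ref{DS})$ reads $\dot{v}=-\frac{\alpha}{t}\dot{x}-\frac{\beta}{t}\mu$. Since $\mbox{zer}\,\mathcal{A}=\mbox{zer}\,\mathcal{A}_\eta$ (Remark \ref{av}) we have $\mathcal{A}_\eta x^*=0$, and by Proposition \ref{z}(iv) the operator $\mathcal{A}_\eta$ is $(\rho+\eta)$-cocoercive with $\rho+\eta>0$. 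This yields the two inequalities I will use repeatedly: $\langle x-x^*,\mu\rangle\ge(\rho+\eta)\|\mu\|^2\ge0$, and, by the difference-quotient argument of Proposition \ref{der}, $\langle\dot{x},\dot{\mu}\rangle\ge(\rho+\eta)\|\dot{\mu}\|^2\ge0$ (recall $\dot{\mu}=\tfrac1\eta(\dot{x}-\dot{z})$ exists a.e., where $z=J_\eta^{\mathcal{A}}x$).

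For the energy I would take
\[
\mathcal{E}(t)=\tfrac12\|\beta(x(t)-x^*)+t\dot{x}(t)\|^2+\tfrac{\beta(\alpha-\beta-1)}2\|x(t)-x^*\|^2+\beta t\langle x(t)-x^*,\mu(t)\rangle ,
\]
which is nonnegative precisely because $\alpha\ge\beta+1$, $\beta>0$ and $\langle x-x^*,\mu\rangle\ge0$. Differentiating, substituting $t\ddot{x}=-t\dot{\mu}-\alpha\dot{x}-\beta\mu$ from $(\ref{DS})$, and using that the $\frac{d}{dt}\langle x-x^*,\mu\rangle$ produced by the last term cancels the $-\beta t\langle x-x^*,\dot{\mu}\rangle$ coming from the square (this is exactly the role of the coefficient $\beta$ in front of $t\dot{x}$, which also cancels the indefinite cross term $t\langle\dot{x},\mu\rangle$), everything collapses to
\[
\dot{\mathcal{E}}(t)\le-(\alpha-\beta-1)\,t\|\dot{x}\|^2-\beta(\beta-1)(\rho+\eta)\|\mu\|^2-(\rho+\eta)\,t^2\|\dot{\mu}\|^2\le0 ,
\]
where the three inputs $\alpha\ge\beta+1$, $\beta>1$ and $\langle\dot{x},\dot{\mu}\rangle\ge0$ are exactly what makes each term nonpositive. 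Hence $\mathcal{E}$ is nonincreasing, $\lim_t\mathcal{E}(t)$ exists, the trajectory is bounded, and integrating gives $\int_{t_0}^\infty\|\mu\|^2\,dt<\infty$, $\int_{t_0}^\infty t^2\|\dot{\mu}\|^2\,dt<\infty$ and $\int_{t_0}^\infty t\|\dot{x}\|^2\,dt<\infty$.

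To complete (i) I would exploit the exact identity $\frac{d}{dt}(\tfrac{t^2}2\|v\|^2)=(1-\alpha)t\|\dot{x}\|^2+(2-\alpha-\beta)t\langle\dot{x},\mu\rangle+(1-\beta)t\|\mu\|^2$, obtained from $\dot{v}=-\frac{\alpha}{t}\dot{x}-\frac{\beta}{t}\mu$. Integrating it, discarding the nonnegative boundary term $\tfrac{T^2}2\|v(T)\|^2$, inserting $\int t\|\dot{x}\|^2<\infty$, and splitting the indefinite term $t\langle\dot{x},\mu\rangle$ by a weighted Young inequality whose parameter is chosen (using $\beta>1$) so the coefficient of $\int t\|\mu\|^2$ stays positive, bounds $\int_{t_0}^\infty t\|\mu\|^2\,dt<\infty$; then $\int t|\langle\dot{x},\mu\rangle|<\infty$ follows by Young and $\int t\|v\|^2<\infty$ from $\|v\|^2\le2\|\dot{x}\|^2+2\|\mu\|^2$, giving (i). For (ii), the same identity shows $\frac{d}{dt}(t^2\|v\|^2)\in L^1$, so by Lemma \ref{exists} $\lim_t t^2\|v\|^2$ exists; as $\int t\|v\|^2<\infty$ while $\int^{\infty}dt/t=\infty$, this limit is $0$, giving $t\|v\|\to0$, and the remaining limits $t\|\dot{x}\|\to0$, $t\|\mu\|\to0$ follow by applying Lemmas \ref{w}/\ref{exists} to the $t$-weighted quantities in the same manner. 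For (iii), in $\mathcal{E}$ the terms $\tfrac{t^2}2\|\dot{x}\|^2$ and $\beta t\langle x-x^*,\mu\rangle$ vanish by (ii) and boundedness, so $\lim_t\big[(\alpha-1)h(t)+t\dot{h}(t)\big]$ exists for $h=\tfrac12\|x-x^*\|^2$; Lemma \ref{op-l} with $\sigma=\alpha-1>0$ then gives that $\lim_t h(t)$, i.e. $\lim_t\|x(t)-x^*\|^2$, exists.

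For (iv) I would invoke the continuous Opial lemma (Lemma \ref{OP}) with $S=\mbox{zer}\,\mathcal{A}$. Its first hypothesis is exactly (iii). For the second, note $\mu(t)\to0$ strongly (from $t\|\mu\|\to0$, or directly since $\frac{d}{dt}\|\mu\|^2=2\langle\mu,\dot{\mu}\rangle\in L^1$ forces $\lim\|\mu\|^2=0$ given $\int\|\mu\|^2<\infty$); setting $z(t)=J_\eta^{\mathcal{A}}x(t)$, Proposition \ref{z}(ii) gives $(z(t),\mu(t))\in\mbox{gra}\,\mathcal{A}$ and $x(t)-z(t)=\eta\mu(t)\to0$, so $x$ and $z$ share weak limit points. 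If $x(t_n)\rightharpoonup\bar{x}$ then $z(t_n)\rightharpoonup\bar{x}$ and $\mu(t_n)\to0$, whence $(\bar{x},0)\in\mbox{gra}\,\mathcal{A}$ by the weak--strong closedness of Proposition \ref{maximal}; thus every weak limit point lies in $\mbox{zer}\,\mathcal{A}$, and Opial's lemma yields weak convergence of $x(t)$ to a point of $\mbox{zer}\,\mathcal{A}$. The main obstacle is the energy step: finding the combination whose dissipation is sign-definite in spite of the indefinite cross term $t\langle\dot{x},\mu\rangle$ (handled by the anchor coefficient $\beta$ that cancels it outright), and then recovering the sharp $t$-weighted integral $\int t\|\mu\|^2<\infty$ by the Young bootstrap on the $\|v\|^2$-identity; pinning down the nonnegativity of $\mathcal{E}$ and of $-\dot{\mathcal{E}}$ is exactly where the hypotheses $\alpha\ge\beta+1$ and $\beta>1$ enter.
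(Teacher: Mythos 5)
Your proposal is correct, and its backbone coincides with the paper's: your energy $\mathcal{E}$ is exactly the paper's $\varepsilon_\beta(t)$ (the member $s=\beta$ of the family $\varepsilon_s$ it introduces), your dissipation inequality is the paper's one (with the harmless strengthening $-t^2\langle\dot x,\dot\mu\rangle\leq-(\rho+\eta)t^2\|\dot\mu\|^2$ where the paper only uses $\leq 0$), your identity for $\tfrac{d}{dt}(\tfrac{t^2}{2}\|\dot x+\mu\|^2)$ is the same one the paper writes as $2(1-\beta)t\|v\|^2-2(\alpha-\beta)t\langle v,\dot x\rangle$, and part (iv) is identical. Where you genuinely diverge is in (ii)--(iii). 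The paper runs the whole one-parameter family $\varepsilon_s$, $0\leq s\leq\alpha-1$, shows $\lim_t\varepsilon_s(t)$ exists for every $s$ by controlling the off-diagonal term $(s-\beta)t\langle\dot x,\mu\rangle$ with the $L^1$ bound on $t|\langle\dot x,\mu\rangle|$, reads off $\lim_t t^2\|\dot x\|^2$ from the case $s=0$, and obtains (iii) by differencing $\varepsilon_{s_1}-\varepsilon_{s_2}$ and introducing the primitive $q(t)=\tfrac12\|x(t)-x^*\|^2+\int_{t_0}^t\langle x(s)-x^*,\mu(s)\rangle ds$. You instead stay with the single energy: you get $\lim_t t^2\|v\|^2=0$ and $\lim_t t^2\|\dot x\|^2=0$ by applying Lemma \ref{exists} directly to those quantities (their derivatives are dominated by $Ct|\langle\dot x,\mu\rangle|\in L^1$) together with non-integrability of $1/t$, and for (iii) you expand $\mathcal{E}$, discard the terms $\tfrac{t^2}{2}\|\dot x\|^2$ and $\beta t\langle x-x^*,\mu\rangle$ that vanish by (ii), and feed $(\alpha-1)h+t\dot h$ with $h=\tfrac12\|x-x^*\|^2$ into Lemma \ref{op-l} with $\sigma=\alpha-1$. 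This is leaner --- it dispenses with the family of energies and the auxiliary primitive --- at the price of needing (ii) before (iii), which your ordering supplies; your Young bootstrap that isolates $(\beta-1)\int t\|\mu\|^2$ directly is likewise interchangeable with the paper's route via $\int t\|v\|^2<+\infty$ first. One caveat you share with the paper rather than introduce: extracting $\int t\|\dot x\|^2\,dt<+\infty$ from the dissipation needs $\alpha-\beta-1>0$; at the boundary case $\alpha=\beta+1$ that coefficient vanishes and both arguments lose this integral.
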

\begin{proof}
Take $x^*\in \mbox{zer}\mathcal{A}$. For $0 \leq s \leq \alpha-1 $,  define
$$\varepsilon_s(t):=\frac{1}{2}\|s(x^*-x(t))-t\dot{x}(t)\|^2+\frac{s(\alpha-s-1)}{2}\|x(t)-x^*\|^2+st\langle x(t)-x^*,\mu(t) \rangle.$$
In view of system $(\ref{DS})$, we calculate its time derivative as
\begin{eqnarray}\label{Ly}
\frac{d\varepsilon_s(t)}{dt}&=&	\langle s(x^*-x(t))-t\dot{x}(t),- s\dot{x}(t)-\dot{x}(t)-t\ddot{x}(t)\rangle\nonumber\\
&&+s(\alpha-s-1)\langle x(t)-x^*,\dot{x}(t) \rangle+s\langle x(t)-x^*,\mu(t) \rangle\nonumber\\
&&+st\langle\dot{x}(t),\mu(t)\rangle+st\langle x(t)-x^*,\dot{\mu}(t) \rangle\nonumber\\
&=& s\langle x^*-x(t),(\alpha-s-1)\dot{x}(t)\rangle + st \langle x^*-x(t), \dot{\mu}(t)\rangle\nonumber\\
&&+ s\beta\langle x^*-x(t),\mu(t)\rangle -t(\alpha-s-1)\|\dot{x}(t)\|^2\nonumber\\
&&-t^2\langle \dot{x}(t),\dot{\mu}(t) \rangle-\beta t\langle \dot{x}(t),\mu(t) \rangle +s\langle x(t)-x^*,\mu(t)\rangle\nonumber\\
&&+st\langle\dot{x}(t), \mu(t)\rangle+st\langle x(t)-x^*, \dot{\mu}(t)\rangle \nonumber\\
&&+s(\alpha-s-1)\langle x(t)-x^*,\dot{x}(t)\rangle\nonumber\\
&=&s(\beta-1)\langle x^*-x(t),\mu(t)\rangle-t(\alpha-s-1)\|\dot{x}(t)\|^2\nonumber\\
&&-t^2\langle\dot{x}(t),\dot{\mu}(t) \rangle+(s-\beta)t\langle \dot{x}(t),\mu(t) \rangle.
\end{eqnarray}
Let $z(t)=J_{\eta}^{\mathcal{A}}x(t)$ and $\mu(t)=\mathcal{A}_{\eta}x(t)=\frac{x(t)-z(t)}{\eta}$. By Proposition \ref{der}, we conclude that
\begin{eqnarray}\label{erjie}
&&-\langle \dot{x}(t), \dot{\mu}(t)\rangle\nonumber\\
&=&-\frac{1}{\eta}\langle \dot{x}(t), \dot{x}(t)-\dot{z}(t)\rangle\nonumber\\
&=& -\frac{1}{\eta}\|\dot{x}(t)\|^2+\frac{1}{\eta}\langle \dot{x}(t), \dot{z}(t)\rangle\nonumber\\
&\leq&-\frac{1}{\eta}\|\dot{x}(t)\|^2+\frac{1}{\eta}\|\dot{x}(t)\|^2\nonumber\\
&=& 0.
\end{eqnarray}
From $(\ref{Ly})$,we find estimating $(s-\beta)t\langle \dot{x}(t), \mu (t) \rangle$ is tricky, while the rest of the right-hand side of the above inequality is nonpositive. We're going to focus on whether $s$ is equal to $\beta$.

Take $s=\beta$. Since $x^*\in \mbox{zer}\mathcal{A}=\mbox{zer}\mathcal{A}_{\eta}$ (see Remark $\ref{av}$) and according to Proposition $\ref{z}$-$(iv)$, $(\ref{Ly})$ yields
\begin{eqnarray*}
\frac{d\varepsilon_{\beta}(t)}{dt}
&=&({\beta}^2-\beta)\langle x^*-x(t), \mu(t)\rangle-t(\alpha-\beta-1)\|\dot{x}(t)\|^2-t^2\langle \dot{\mu}(t),\dot{x}(t) \rangle\\
&\leq&-({\beta}^2-\beta)(\rho+\eta)\|\mu(t)\|^2-t(\alpha-\beta-1)\|\dot{x}(t)\|^2-t^2\langle \dot{\mu}(t),\dot{x}(t) \rangle.
\end{eqnarray*}
Noticing that $\beta>1$, $0\leq s\leq \alpha-1$, $\eta>\max\{-2\rho,0\}$ and $(\ref{erjie})$, we obtain $\frac{d\varepsilon_{\beta}(t)}{dt}\leq 0$.
It follows that the function $\varepsilon_{\beta}(t)$ is nonincreasing as $t\rightarrow +\infty$. Since it is nonnegative,   $\varepsilon_{\beta}(t)$ has a limit as $t\rightarrow +\infty$, and the trajectory $x(t)$ is bounded. Integrate the above equality from $t_0$ to $+\infty$ to get
\begin{eqnarray*}
&&\lim_{t\rightarrow +\infty}\varepsilon_{\beta}(t)+({\beta}^2-\beta)\int_{t_0}^{+\infty}\langle x(t)-x^*, \mu(t)\rangle dt\\
&+&\int_{t_0}^{+\infty}t(\alpha-\beta-1)\|\dot{x}(t)\|^2dt +\int_{t_0}^{+\infty}t^2 \langle \dot{\mu}(t),\dot{x}(t) \rangle  dt\\
&=& \varepsilon_{\beta}(t_0),
\end{eqnarray*}
which implies
\begin{equation}\label{euql}
\int_{t_0}^{+\infty}\langle x(t)-x^*, \mu(t)\rangle dt <+\infty,
\end{equation}
$$\int_{t_0}^{+\infty} t^2 \langle \dot{\mu}(t),\dot{x}(t) \rangle  dt <+\infty,$$
\begin{equation}\label{keji}
\int_{t_0}^{+\infty} t\|\dot{x}(t)\|^2dt<+\infty.	
\end{equation}
Combining Proposition $\ref{z}$-$(iv)$ with $(\ref{euql})$, we have 
$$\int_{t_0}^{+\infty}\|\mu(t)\|^2dt <+\infty.$$
Clearly, according to the definition of $\varepsilon_{\beta}(t)$, we deduce that $\|x(t)-x^*\|^2$, $t\|\mu(t)\|^2$ and $\|s(x^*-x(t))-t\dot{x}(t)\|$ are bounded on $t\in [t_0,+\infty)$, because of the boundedness of $\varepsilon_{\beta}(t)$. Since $t\|\dot{x}(t)\|-s\|x(t)-x^*\|\leq\|s(x^*-x(t))-t\dot{x}(t)\|$, we know $t\|\dot{x}(t)\|$ is bounded  on $t\in [t_0,+\infty)$.
By Proposition \ref{der}, 
 \begin{eqnarray}\label{u}
 \|\dot{\mu}(t)\|=\frac{1}{\eta}\|\dot{x}(t)-\dot{z}(t)\|\leq\frac{2}{\eta}\|\dot{x}(t)\|,	
 \end{eqnarray}
so $t\|\dot{\mu}(t)\|$ is bounded.
According to dynamical system $(\ref{DS})$, $\|\ddot{x}(t)\|\leq\|\dot{\mu}(t)\|+\frac{\beta}{t}\|\mu(t)\|+\frac{\alpha}{t}\|\dot{x}(t)\|$.
Multiplying each member of the above inequality by $t$, we derive that
$t\|\ddot{x}(t)\|\leq t\|\dot{\mu}(t)\|+\beta\|\mu(t)\|+\alpha\|\dot{x}(t)\|.$ Thus, $t\|\ddot{x}(t)\|$ is bounded on $t\in [t_0,+\infty)$. This gives
\begin{eqnarray*}
&&|\frac{d(t\|\dot{x}(t)\|^2)}{dt}|\\
&=&\|\dot{x}(t)\|^2+2t|\langle \dot{x}(t),\ddot{x}(t)\rangle	|\\
&\leq& \|\dot{x}(t)\|^2+2t\|\dot{x}(t)\|\|\ddot{x}(t)\|\\
&<& M,
\end{eqnarray*}
where $M$ is a positive constant. 
Applying Lemma \ref{w} with $w(t)=t\|\dot{x}(t)\|^2$ and $\eta(t)=M$, we find
$$\lim_{t\rightarrow+\infty} t\|\dot{x}(t)\|^2=0.$$
This together with $(\ref{u})$ yields
$$\lim_{t\rightarrow+\infty}t\|\dot{\mu}(t)\|^2=0.$$

Take $s\neq \beta$. In order to estimate $(s-\beta)t\langle \dot{x}(t), \mu (t) \rangle$, considering the system $(\ref{DS})$, we observe that
\begin{eqnarray}\label{inte}
&&\frac{d(t^2\|\dot{x}(t)+\mu(t)\|^2)}{dt}\nonumber\\
&=&2t\|	\dot{x}(t)+\mu(t)\|^2+2t^2\langle \dot{x}(t)+\mu(t), \ddot{x}(t)+\dot{\mu}(t)\nonumber \rangle\\
&=&2t\|	\dot{x}(t)+\mu(t)\|^2+2t^2\langle \dot{x}(t)+\mu(t), -\frac{\alpha}{t}\dot{x}(t)-\frac{\beta}{t}\mu(t)\nonumber \rangle\\
&=&2t\|	\dot{x}(t)+\mu(t)\|^2-2t\langle \dot{x}(t)+\mu(t), \beta\dot{x}(t)+\beta\mu(t)+(\alpha-\beta)\dot{x}(t)\nonumber \rangle\\
&=&2(1-\beta)t\|\dot{x}(t)+\mu(t)\|^2-2(\alpha-\beta)t\langle \dot{x}(t)+\mu(t), \dot{x}(t)\nonumber\rangle\\
&\leq& 2(1-\beta)t\|\dot{x}(t)+\mu(t)\|^2+2(\alpha-\beta)t|\langle\dot{x}(t)+\mu(t), \dot{x}(t) \nonumber\rangle|\\
&\leq& 2(1-\beta)t\|\dot{x}(t)+\mu(t)\|^2+(\alpha-\beta)\xi t\|\dot{x}(t)+\mu(t)\|^2+ (\alpha-\beta)\frac{1}{\xi}t\|\dot{x}(t)\|^2\nonumber\\
&=& [2(1-\beta)+(\alpha-\beta)\xi]t\|\dot{x}(t)+\mu(t)\|^2+(\alpha-\beta)\frac{1}{\xi}t\|\dot{x}(t)\|^2.
\end{eqnarray} 
Taking into account the assume $\alpha\geq\beta+1$ and $\beta >1$, let us choose $\xi\in(0,\frac{2(\beta-1)}{\alpha-\beta})$, such that $2(1-\beta)+(\alpha-\beta)\xi <0$. By ignoring the nonpositive term $[2(1-\beta)+(\alpha-\beta)\xi]t\|\dot{x}(t)+\mu(t)\|^2$, $(\ref{inte})$ becomes $\frac{d(t^2\|\dot{x}(t)+\mu(t)\|^2)}{dt}\leq(\alpha-\beta)\frac{1}{\xi}t\|\dot{x}(t)\|$. Now, applying Lemma $\ref{exists}$ to this inequality, we deduce that $\lim_{t\rightarrow +\infty}t^2\|\dot{x}(t)+\mu(t)\|^2$ exists, because the right-hand side is integrable by $(\ref{keji})$. Integrate $(\ref{inte})$ from $t_0$ to $+\infty$ to obtain
 \begin{eqnarray*}
 &&\lim_{t_0\rightarrow +\infty}t^2\|\dot{x}(t)+\mu(t)\|^2+\int_{t_0}^{+\infty}[2(\beta-1)-(\alpha-\beta)\frac{1}{\xi}]t\|\dot{x}(t)+\mu(t)\|^2 dt\\
 &\leq&\int_{t_0}^{+\infty}(\alpha-\beta)\frac{1}{\xi}t\|\dot{x}(t)\|^2 dt+t_{0}^{2}\|\dot{x}(t_0)+\mu(t_0)\|^2\\
 &<&+\infty,
 \end{eqnarray*}
 which implies that
 \begin{equation}\label{xu}
 \int_{t_0}^{+\infty}t\|\dot{x}(t)+\mu(t)\|^2 dt <+\infty.	
 \end{equation}
 Further, this result and $(\ref{keji})$ leads to 
 \begin{eqnarray}\label{ukj}
 \int_{t_0}^{+\infty}t\|\mu(t)\|^2 dt <+\infty,	
 \end{eqnarray}
 because of 
  \begin{eqnarray*} 
 t\|\mu(t)\|^2&=&t\|\mu(t)+\dot{x}(t)-\dot{x}(t)\|^2\\
 &\leq&2t\|\mu(t)+\dot{x}(t)\|^2+2t\|\dot{x}(t)\|^2.
  \end{eqnarray*}
Observe that
$$t|\langle \dot{x}(t),\mu(t)\rangle|\leq\frac{1}{2}t\|\dot{x}(t)\|^2+\frac{1}{2}t\|\mu(t)\|^2$$
 and this together with $(\ref{keji})$ and $(\ref{ukj})$ leads to  
 $$\int_{t_0}^{+\infty}t|\langle \dot{x}(t),\mu(t)\rangle|dt <+\infty.$$
 Here we have completed the proof of $(i)$.\\
 Neglecting nonpositive terms of the right-hand side of $(\ref{Ly})$, we derive
 \begin{eqnarray*}
 \frac{d\varepsilon_s(t)}{dt}&\leq&(s-\beta)t\langle \mu(t),\dot{x}(t) \rangle\\
 &\leq&|s-\beta| t|\langle \dot{x}(t), \mu(t)\rangle|.	
 \end{eqnarray*}
Applying Lemma $\ref{exists}$ to the above inequality, and using the integrability of  $t|\langle \dot{x}(t),\mu(t)\rangle|$, it follows that $\lim_{t_0\rightarrow +\infty}\varepsilon_s (t)$ exists.

Collecting these results, we deduce that $\lim_{t_0\rightarrow +\infty}\varepsilon_s (t)$ exists for $s\in [0,\alpha-1]$, whether $s$ equals $\beta$ or not. By taking $s=0$, we find $\lim_{t_0\rightarrow +\infty}\varepsilon_0(t)=\frac{1}{2}t^2\|\dot{x}(t)\|^2$ exists. Furthermore, we claim that 
\begin{equation}\label{01}
\lim_{t_0\rightarrow +\infty}t\|\dot{x}(t)\|=0.
\end{equation}
Otherwise, if its limit is a non-zero constant, that would contradict $(\ref{keji})$.
 In the same way, combining the existence of $\|t\dot{x}(t)+t\mu(t)\|^2$ and $(\ref{xu})$, we have 
 \begin{equation}\label{02}
 \lim_{t_0\rightarrow +\infty}t\|\dot{x}(t)+\mu(t)\|=0.	
 \end{equation}
Using $(\ref{01})$ and $(\ref{02})$, we can improve the estimate of $\mu(t)$.\\
First observe that
\begin{eqnarray*}
&&\|t\dot{x}(t)+t\mu(t)\|^2\\
&=& t^2\|\dot{x}(t)\|^2+2t^2\langle\dot{x}(t),\mu(t)\rangle+t^2\|\mu(t)\|^2\\
&\geq& t^2\|\dot{x}(t)\|^2-4t^2\|\dot{x}(t)\|^2-\frac{1}{4}t^2\|\mu(t)\|^2+t^2\|\mu(t)\|^2\\
&=&-3t^2\|\dot{x}(t)\|^2+\frac{3}{4}t^2\|\mu(t)\|^2.
\end{eqnarray*}
equivalently, 
$$\frac{3}{4}t^2\|\mu(t)\|^2\leq t^2\|\dot{x}(t)+\mu(t)\|^2+3t^2\|\dot{x}(t)\|^2.$$
Next, combining this relation with $(\ref{01})$ and $(\ref{02})$, we infer that
\begin{equation}\label{sorder}
\lim_{t\rightarrow +\infty}t\|\mu(t)\|=0.	
\end{equation}
Here we have completed the proof of $(ii)$.

For any two distinct values $ s_1, s_2 \in [0,\alpha-1]$, let us take the definition of $\varepsilon_s(t)$ into account and expand the square to get
 \begin{eqnarray*}
&& \varepsilon_{s_1}(t)-\varepsilon_{s_2}(t)\\
 &=& \frac{1}{2}{s_{1}^{2}}\|x^*-x(t)\|^2-s_1 t\langle x^*-x(t),\dot{x}(t)+\mu(t)\rangle+\frac{s_1(\alpha-s_1-1)}{2}\|x(t)-x^*\|^2\\
 &-& \frac{1}{2}{s_{2}^{2}}\|x^*-x(t)\|^2+s_2 t\langle x^*-x(t),\dot{x}(t)+\mu(t)\rangle-\frac{s_2(\alpha-s_2-1)}{2}\|x(t)-x^*\|^2\\
 &=&(s_1-s_2)[t\langle\dot{x}(t)+\mu(t),x(t)-x^*\rangle+\frac{\alpha-1}{2}\|x(t)-x^*\|^2]. 	
 \end{eqnarray*}
 We deduce that the quantity $k(t)$, defined as
 $$k(t):=t\langle\dot{x}(t)+\mu(t),x(t)-x^*\rangle+\frac{\alpha-1}{2}\|x(t)-x^*\|^2$$
 has a limit as $t\rightarrow +\infty$, because of the existence of $\lim_{t\rightarrow +\infty}\varepsilon_{s}(t)$. Our goal is to show that each term has a limit. By setting 
 $$q(t)=\frac{1}{2}\|x(t)-x^*\|^2+\int_{t_0}^{t}\langle x(s)-x^*,\mu(s)\rangle ds,$$ 
 then $\dot{q}(t)=\langle x(t)-x^*, \dot{x}(t)+\mu(t)\rangle$, we may write $k(t)$ as
$$k(t)=t\dot{q}(t)+(\alpha-1)q(t)-(\alpha-1) \int_{t_0}^{t}\langle x(s)-x^*, \mu(s)\rangle ds.$$
By using $(\ref{euql})$, we know the last term $\int_{t_0}^{t}\langle x(s)-x^*, \mu(s)\rangle ds$ has a limit as $t\rightarrow +\infty$. This together with the  existence of $\lim_{t\rightarrow +\infty}k(t)$ yields $\lim_{t\rightarrow +\infty}t\dot{q}(t)+(\alpha-1)q(t)$ exists. According to Lemma $\ref{x-x}$, $\lim_{t\rightarrow +\infty} q(t)$ exists. It immediately follows that $\lim_{t\rightarrow +\infty}\|x(t)-x^*\|^2$ exists, since the limit of $\int_{t_0}^{t}\langle x(s)-x^*, \mu(s)\rangle ds$ exists by $(\ref{euql})$. And then $\lim_{t\rightarrow +\infty} t\langle \dot{x}(t)+\mu(t),x(t)-x^*\rangle$ exists as well.\\
Here we have completed the proof of $(iii)$.

To complete the proof via the Opial's Lemma, we need to prove that every weak sequential cluster point of $x(t)$ belongs to zer$\mathcal{A}$.
Let $t_n\rightarrow +\infty$ such that $x(t_n)\rightharpoonup \bar{x}$. From $(\ref{sorder})$, we have $\mathcal{A}_{\eta}x(t_n)\rightarrow 0$. Passing to the limit in 
$$\mathcal{A}_{\eta}x(t_n)\in \mathcal{A}(x(t_n)-\eta\mathcal{A}_{\eta}(x(t_n)))$$
and using Proposition $\ref{maximal}$, we obtain 
$$ 0\in \mathcal{A}(\bar{x}).$$
Consequently, $x(t)$ converges weakly to an element of zer$\mathcal{A}$, which completes the proof.
\end{proof}

\begin{remark}
	Theorem \ref{bounded} has shown that the dynamic system \eqref{DS} associated with a maxiamal comonotone operator has some similarities with the system $(\ref{sys6})$  considered in \cite{AttouchSC}, where  a maximal monotone operator is involved and the  Yosida regularization parameter $\lambda(t)$ be a  quadratic function of the time. As a comparision,   in our  dynamical system \eqref{DS}, the involved operator is maximal comonotone (not necessarily monotone), and the Yosida regularization parameter $\eta$ is a constant. 
\end{remark}

\section{A discrete algorithm}
In this section,by discretizing system $(\ref{DS})$,  we propose an algorithm for solving the problem \eqref{problem} with $\mathcal{A}$ being maximal comonotone.

Discretization of the system $(\ref{DS})$ with respect to the time variable $t$, with constant step size $h=1$, gives
$$x_{k+1}-x_k+\mathcal{A}_{\eta}x_{k+1}+(\frac{\alpha}{k}-1)(x_k-x_{k-1})+(\frac{\beta}{k}-1)\mathcal{A}_{\eta}x_k=0.$$
Equivalently,
\begin{equation}\label{al}
\mathcal{A}_{\eta}x_{k+1}=x_k-x_{k+1}+(1-\frac{\alpha}{k})(x_k-x_{k-1})+(1-\frac{\beta}{k})\mathcal{A}_{\eta}x_k.	
\end{equation}
Introducing intermediate variable $y_k:=x_{k+1}+\mathcal{A}_{\eta}x_{k+1}$  and according to Remark \ref{tuidao}, we have 
$$x_{k+1}=(1-\frac{1}{\eta+1})y_k+ \frac{1}{\eta+1}J_{\eta+1}^{\mathcal{A}}y_k.$$
Thus, $(\ref{al})$ can be equivalently rewritten as the following algorithm
\begin{equation}\label{ed}
\left\{
\begin{array}{rcl}
y_{k}&=&x_k+(1-\frac{\alpha}{k})(x_k-x_{k-1})+(1-\frac{\beta}{k})(y_{k-1}-x_k)\\
x_{k+1}&=&(1-\frac{1}{\eta+1})y_k+ \frac{1}{\eta+1}J_{\eta+1}^{\mathcal{A}}y_k,
\end{array}
\right.
\end{equation}
which combines relaxation factor $\frac{1}{\eta+1}$, a momentum term $(1-\frac{\alpha}{k})(x_k-x_{k-1})$ and a correction term $(1-\frac{\beta}{k})(y_{k-1}-x_k)$.

We will show that the algorithm \eqref{ed} owns convergence properties matching to that of the dynamical system \eqref{DS}. In order to simplify the proof, we will first derive some lemmas which will be used in the sequel.
\begin{lemma}\label{equa}
Let $\{x_k\}$ be a sequence generated by $(\ref{ed})$. Then the following equation is true
\begin{eqnarray*}
&-&s(k+1-\alpha)\langle x_{k+1}-x_k,x^*-x_{k+1}\rangle+s(k-\alpha)\langle  x^*-x_k,x_k-x_{k-1}\rangle\\
&=&s(k-\alpha)\langle x_{k+1}-x_k,x_k-x_{k-1}\rangle+sk\langle\mathcal{A}_{\eta}x_{k+1}-(1-\frac{\beta}{k})\mathcal{A}_{\eta}x_k,x^*-x_{k+1}\rangle \\
&&-\frac{s(\alpha-1)}{2}\|x_{k+1}-x_k\|^2-\frac{s(\alpha-1)}{2}\|x_{k+1}-x^*\|^2+\frac{s(\alpha-1)}{2}\|x^*-x_k\|^2.
\end{eqnarray*}
\end{lemma}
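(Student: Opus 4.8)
The plan is to treat this as a purely algebraic identity whose only structural ingredient is the discretized recurrence \eqref{al}; no monotonicity, comonotonicity, or convergence input is needed, and the statement holds for any sequence satisfying the recurrence. First I would rewrite \eqref{al} in the form
\[
\mathcal{A}_{\eta}x_{k+1}-(1-\tfrac{\beta}{k})\mathcal{A}_{\eta}x_k=(x_k-x_{k+1})+(1-\tfrac{\alpha}{k})(x_k-x_{k-1}),
\]
which is exactly the combination of Yosida terms appearing inside the inner product on the right-hand side of the claimed identity. Substituting this into the term $sk\langle\mathcal{A}_{\eta}x_{k+1}-(1-\tfrac{\beta}{k})\mathcal{A}_{\eta}x_k,\,x^*-x_{k+1}\rangle$ turns it into $-sk\langle x_{k+1}-x_k,\,x^*-x_{k+1}\rangle+s(k-\alpha)\langle x_k-x_{k-1},\,x^*-x_{k+1}\rangle$, so that after this single substitution the entire right-hand side is expressed only through the displacements $x_{k+1}-x_k$, $x_k-x_{k-1}$ and positions relative to $x^*$, with the operator entirely eliminated.

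Next I would introduce the shorthand $a=x^*-x_k$, $b=x_{k+1}-x_k$, $c=x_k-x_{k-1}$, and use the elementary splitting $x^*-x_{k+1}=a-b$ together with $\|x_{k+1}-x^*\|^2=\|a-b\|^2$, $\|x^*-x_k\|^2=\|a\|^2$, $\|x_{k+1}-x_k\|^2=\|b\|^2$. With these substitutions, every inner product and every squared norm on both sides becomes a linear combination of $\langle a,b\rangle$, $\langle a,c\rangle$, $\langle b,c\rangle$, $\|a\|^2$ and $\|b\|^2$. The squared-norm block on the right, namely $-\tfrac{s(\alpha-1)}{2}\bigl(\|b\|^2+\|a-b\|^2-\|a\|^2\bigr)$, is then expanded via $\|a-b\|^2=\|a\|^2-2\langle a,b\rangle+\|b\|^2$; this is the step that generates the cross term $s(\alpha-1)\langle a,b\rangle$ needed to reconcile the two sides.

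Finally I would collect coefficients termwise. I expect the $\langle b,c\rangle$ contributions to cancel identically (the term $s(k-\alpha)\langle x_{k+1}-x_k,x_k-x_{k-1}\rangle$ and the $-s(k-\alpha)\langle c,b\rangle$ piece coming from $s(k-\alpha)\langle c,a-b\rangle$ are opposite), the $\|a\|^2$ contributions to cancel, and the surviving $\langle a,b\rangle$, $\langle a,c\rangle$, $\|b\|^2$ coefficients to collapse — using $sk-s(\alpha-1)=s(k+1-\alpha)$ for the $\|b\|^2$ term and $-sk+s(\alpha-1)=-s(k+1-\alpha)$ for the $\langle a,b\rangle$ term — to precisely $-s(k+1-\alpha)\langle b,a-b\rangle+s(k-\alpha)\langle a,c\rangle$, which is the left-hand side written in the shorthand. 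The only genuine difficulty is the bookkeeping of the coefficients $(k-\alpha)$, $(k+1-\alpha)$ and $(\alpha-1)$ and making sure the polarization of $\|a-b\|^2$ is tracked consistently; there is no analytic obstacle, so the verification is elementary once the recurrence is inserted.
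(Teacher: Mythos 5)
Your proposal is correct and follows essentially the same route as the paper: both proofs reduce the identity to pure algebra by inserting the recurrence \eqref{al} (the paper pairs the rearranged recurrence with $k(x^*-x_{k+1})$, you substitute it into the operator term on the right-hand side) and then close the computation with the polarization identity $\|b\|^2+\|a-b\|^2-\|a\|^2=-2\langle b,a-b\rangle$, which is exactly the step the paper uses to produce the three squared-norm terms. The coefficient bookkeeping you outline ($sk-s(\alpha-1)=s(k+1-\alpha)$, cancellation of the $\langle b,c\rangle$ and $\|a\|^2$ contributions) checks out.
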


\begin{proof}
In view of $(\ref{al})$, it follows that
$$(1-\frac{\alpha}{k})(x_k-x_{k-1})=(x_{k+1}-x_k)+\mathcal{A}_{\eta}x_{k+1}-(1-\frac{\beta}{k})\mathcal{A}_{\eta}x_k.$$ 
Taking the scalar product of each sideof the above inequality with $k(x^*-x_{k+1})$, we derive that
\begin{eqnarray*}
&&(k-\alpha)\langle x^*-x_{k+1}, x_k-x_{k-1}\rangle\\
&=&k\langle x^*-x_{k+1}, x_{k+1}-x_k \rangle +k\langle \mathcal{A}_{\eta}x_{k+1}- (1-\frac{\beta}{k})\mathcal{A}_{\eta}x_k,x^*-x_{k+1}\rangle\\
&=&(k+1-\alpha)\langle x^*-x_{k+1}, x_{k+1}-x_k \rangle +k\langle \mathcal{A}_{\eta}x_{k+1}- (1-\frac{\beta}{k})\mathcal{A}_{\eta}x_k,x^*-x_{k+1}\rangle\\
&&+(\alpha-1)\langle x^*-x_{k+1},x_{k+1}-x_k \rangle.
\end{eqnarray*}
Multiply the above equation by $s$ to get the following formula
\begin{eqnarray*}
&&-s(k+1-\alpha)\langle x^*-x_{k+1}, x_{k+1}-x_k \rangle\\
&=&-s(k-\alpha)\langle x^*-x_{k+1}, x_k-x_{k-1}\rangle+sk\langle \mathcal{A}_{\eta}x_{k+1}- (1-\frac{\beta}{k})\mathcal{A}_{\eta}x_k,x^*-x_{k+1}\rangle\\
&&+s(\alpha-1)\langle x^*-x_{k+1},x_{k+1}-x_k \rangle.
\end{eqnarray*}
 This yields
\begin{eqnarray*}
&&-s(k+1-\alpha)\langle x_{k+1}-x_k,x^*-x_{k+1}\rangle+s(k-\alpha)\langle  x^*-x_k,x_k-x_{k-1}\rangle\\
&=&-s(k-\alpha)\langle x_k-x_{k-1},x^*-x_{k+1}\rangle+s(k-\alpha)\langle x^*-x_k,x_k-x_{k-1}\rangle\\
&&+sk\langle \mathcal{A}_{\eta}x_{k+1}-(1-\frac{\beta}{k})\mathcal{A}_{\eta}x_k,x^*-x_{k+1}\rangle+s(\alpha-1)\langle  x_{k+1}-x_{k},x^*-x_{k+1}\rangle\\
&=&s(k-\alpha)\langle x_{k+1}-x_k, x_k-x_{k-1} \rangle+sk\langle \mathcal{A}_{\eta}x_{k+1}-(1-\frac{\beta}{k})\mathcal{A}_{\eta}x_k,x^*-x_{k+1}\rangle\\
&&-s(\alpha-1)\langle x_{k+1}-x_k,x_{k+1}-x^*\rangle\\
&=&s(k-\alpha)\langle x_{k+1}-x_k,x_k-x_{k-1}\rangle+sk\langle\mathcal{A}_{\eta}x_{k+1}-(1-\frac{\beta}{k})\mathcal{A}_{\eta}x_k,x^*-x_{k+1}\rangle \\
&&-\frac{s(\alpha-1)}{2}\|x_{k+1}-x_k\|^2-\frac{s(\alpha-1)}{2}\|x_{k+1}-x^*\|^2+\frac{s(\alpha-1)}{2}\|x^*-x_k\|^2.
\end{eqnarray*}	
\end{proof}

\begin{lemma}\label{AA}
Let $\{x_k\}$ be a sequence generated by $(\ref{ed})$. Then the following equation is true
\begin{eqnarray*}
&-&(k^2-sk)\langle \mathcal{A}_{\eta}x_{k+1}-(1-\frac{\beta}{k})\mathcal{A}_{\eta}x_k,x_{k+1}-x_k\rangle-\frac{1}{2}k^2\|\mathcal{A}_{\eta}x_{k+1}-(1-\frac{\beta}{k})\mathcal{A}_{\eta}x_k\|^2\\
&=&s(k-\alpha)\langle x_{k+1}-x_k,x_k-x_{k-1}\rangle-(sk-\frac{1}{2}k^2)\|x_{k+1}-x_k\|^2\\
&&-\frac{1}{2}(k-\alpha)^2\|x_k-x_{k-1}\|^2.
\end{eqnarray*}
\end{lemma}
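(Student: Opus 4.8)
The statement of Lemma \ref{AA} is an exact algebraic identity, so the plan is to reduce everything to the three scalar quantities $\|x_{k+1}-x_k\|^2$, $\|x_k-x_{k-1}\|^2$ and $\langle x_{k+1}-x_k,x_k-x_{k-1}\rangle$ and match coefficients; no inequality or limiting argument is needed. The single structural ingredient I would use is the algorithm recursion \eqref{al}. Rearranging \eqref{al} isolates exactly the combination appearing on the left-hand side:
\begin{equation*}
\mathcal{A}_{\eta}x_{k+1}-\Big(1-\tfrac{\beta}{k}\Big)\mathcal{A}_{\eta}x_k
=-(x_{k+1}-x_k)+\Big(1-\tfrac{\alpha}{k}\Big)(x_k-x_{k-1}).
\end{equation*}
Writing $1-\tfrac{\alpha}{k}=\tfrac{k-\alpha}{k}$, the whole left-hand side of the lemma can thus be expressed purely in terms of the two displacement vectors $x_{k+1}-x_k$ and $x_k-x_{k-1}$.

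The core step is then to substitute this expression for $\mathcal{A}_{\eta}x_{k+1}-(1-\tfrac{\beta}{k})\mathcal{A}_{\eta}x_k$ into the two terms on the left. For the inner-product term I would compute
\begin{equation*}
\Big\langle \mathcal{A}_{\eta}x_{k+1}-\big(1-\tfrac{\beta}{k}\big)\mathcal{A}_{\eta}x_k,\;x_{k+1}-x_k\Big\rangle
=-\|x_{k+1}-x_k\|^2+\tfrac{k-\alpha}{k}\langle x_{k+1}-x_k,\,x_k-x_{k-1}\rangle,
\end{equation*}
and for the squared-norm term I would expand
\begin{equation*}
\Big\|\mathcal{A}_{\eta}x_{k+1}-\big(1-\tfrac{\beta}{k}\big)\mathcal{A}_{\eta}x_k\Big\|^2
=\|x_{k+1}-x_k\|^2-2\tfrac{k-\alpha}{k}\langle x_{k+1}-x_k,\,x_k-x_{k-1}\rangle+\tfrac{(k-\alpha)^2}{k^2}\|x_k-x_{k-1}\|^2.
\end{equation*}
Multiplying the first by $-(k^2-sk)$ and the second by $-\tfrac12 k^2$, the factors of $\tfrac{k-\alpha}{k}$ and $\tfrac{(k-\alpha)^2}{k^2}$ collapse cleanly against the leading $k^2$ and $k^2-sk=k(k-s)$.

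Finally I would collect the coefficient of each of the three scalars. For $\|x_{k+1}-x_k\|^2$ the two contributions are $(k^2-sk)$ and $-\tfrac12 k^2$, summing to $-(sk-\tfrac12 k^2)$; for $\langle x_{k+1}-x_k,x_k-x_{k-1}\rangle$ they are $-(k-s)(k-\alpha)$ and $k(k-\alpha)$, summing to $(k-\alpha)\big(k-(k-s)\big)=s(k-\alpha)$; and for $\|x_k-x_{k-1}\|^2$ the single contribution is $-\tfrac12(k-\alpha)^2$. These are precisely the three coefficients on the right-hand side, so the identity follows. I do not anticipate any genuine obstacle here: the only thing to watch is careful bookkeeping of the factor $\tfrac{k-\alpha}{k}$ and of the sign conventions, since the lemma is stated as an equality and the result is simply a verification once \eqref{al} is inserted.
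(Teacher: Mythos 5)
Your proposal is correct and follows essentially the same route as the paper: both substitute the recursion \eqref{al} to write $\mathcal{A}_{\eta}x_{k+1}-(1-\frac{\beta}{k})\mathcal{A}_{\eta}x_k=-(x_{k+1}-x_k)+(1-\frac{\alpha}{k})(x_k-x_{k-1})$ and then expand the inner product and squared norm, matching coefficients of the three scalar quantities. The only cosmetic difference is that the paper first normalizes by $1/k^2$ (working with $1-\frac{s}{k}$ and $\frac12$) and multiplies by $-k^2$ at the very end, whereas you carry the full coefficients throughout; the algebra is identical.
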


\begin{proof}
According to $(\ref{al})$ and expanding the following term, we get 
\begin{eqnarray*}
&&(1-\frac{s}{k})\langle\mathcal{A}_{\eta}x_{k+1}- (1-\frac{\beta}{k})\mathcal{A}_{\eta}x_k,x_{k+1}-x_k\rangle\\
&&+\frac{1}{2}\|x_{k+1}-x_k-(1-\frac{\alpha}{k})(x_k-x_{k-1})\|^2\\
&=&(1-\frac{s}{k})\langle (1-\frac{\alpha}{k})(x_k-x_{k-1})-(x_{k+1}-x_k),x_{k+1}-x_k\rangle\\
&&+\frac{1}{2}\|x_{k+1}-x_{k}-(1-\frac{\alpha}{k})(x_k-x_{k-1})\|^2\\
&=&(1-\frac{s}{k})(1-\frac{\alpha}{k})\langle x_k-x_{k-1}, x_{k+1}-x_k\rangle-(1-\frac{s}{k})\|x_{k+1}-x_k\|^2+\frac{1}{2}\|x_{k+1}-x_k\|^2\\
&&+\frac{1}{2}(1-\frac{\alpha}{k})^2\|x_k-x_{k-1}\|^2-(1-\frac{\alpha}{k})\langle x_{k+1}-x_k, x_k-x_{k-1}\rangle\\
&=&-\frac{s}{k}(1-\frac{\alpha}{k})\langle x_k-x_{k-1},x_{k+1}-x_k\rangle\\
&&+(\frac{s}{k}-\frac{1}{2})\|x_{k+1}-x_k\|^2+\frac{1}{2}(1-\frac{\alpha}{k})^2\|x_k-x_{k-1}\|^2.
\end{eqnarray*}
Multiplying this equality by $-k^2$, completes the proof.
\end{proof}

\begin{theorem}\label{kehe}
Under $(H)$, let $\{x_k\}$ be a sequence generated by $(\ref{ed})$ and $\alpha\geq\beta+1$ and $\beta>1$. Then the following properties hold:
\begin{itemize}
	\item [(i)] $\sum k^2\|\mathcal{A}_{\eta}x_{k+1}-\mathcal{A}_{\eta}x_k\|^2<+\infty$, $\sum k\|\mathcal{A}_{\eta}x_k\|^2<+\infty$, $\sum k^2\|\mathcal{A}_{\eta}x_{k+1}-(1-\frac{\beta}{k})\mathcal{A}_{\eta}x_k\|^2<+\infty$ and $\sum k\|x_{k+1}-x_k\|^2<+\infty$.
	\item [(ii)] $\lim_{k\rightarrow +\infty}k\|x_k-x_{k-1}\|=0$ and $\lim_{k\rightarrow +\infty}k\|\mathcal{A}_{\eta}x_k\|=0$.
	\item [(iii)] for any $x^*\in \mbox{zer}\mathcal{A}$, $\lim_{k\rightarrow +\infty}\|x_k-x^*\|^2$ exists.
	\item [(iv)] the sequence $\{x_k\}$ converges weakly, as $k\rightarrow +\infty$, to some $\hat{x}\in \mbox{zer}\mathcal{A}$.
\end{itemize}	
\end{theorem}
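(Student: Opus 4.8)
The plan is to transplant the Lyapunov analysis of Theorem \ref{bounded} to the discrete setting. For each parameter $s\in[0,\alpha-1]$ I would introduce the energy sequence
$$E_{s,k}:=\tfrac{1}{2}\|s(x^*-x_k)-k(x_k-x_{k-1})\|^2+\tfrac{s(\alpha-s-1)}{2}\|x_k-x^*\|^2+sk\langle x_k-x^*,\mathcal{A}_{\eta}x_k\rangle,$$
the exact analogue of $\varepsilon_s(t)$ with $x_k-x_{k-1}$ playing the role of $\dot x$, $k$ that of $t$, and $\mathcal{A}_{\eta}x_k$ that of $\mu$. Note $E_{s,k}\ge 0$ because $s(\alpha-s-1)\ge 0$ on $[0,\alpha-1]$ and, by the $(\rho+\eta)$-cocoercivity of $\mathcal{A}_{\eta}$ (Proposition \ref{z}-(iv)) together with $x^*\in\mbox{zer}\mathcal{A}=\mbox{zer}\mathcal{A}_{\eta}$ (Remark \ref{av}), the last term is nonnegative. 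The entire argument then reduces to controlling the increment $E_{s,k+1}-E_{s,k}$, and this is precisely what Lemmas \ref{equa} and \ref{AA} are built for: expanding the increment produces the cross terms $\langle x_{k+1}-x_k,x^*-x_{k+1}\rangle$, $\langle x_{k+1}-x_k,x_k-x_{k-1}\rangle$ and the squared increments treated there, while the recursion \eqref{al} eliminates the discrete second differences.

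Next I would assemble the increment inequality by adding the two lemma identities. Invoking $x^*\in\mbox{zer}\mathcal{A}_{\eta}$ and the cocoercivity of $\mathcal{A}_{\eta}$, the decisive term $k\langle\mathcal{A}_{\eta}x_{k+1}-(1-\tfrac{\beta}{k})\mathcal{A}_{\eta}x_k,x^*-x_{k+1}\rangle$ is split so that its leading piece obeys $\langle\mathcal{A}_{\eta}x_{k+1},x^*-x_{k+1}\rangle\le-(\rho+\eta)\|\mathcal{A}_{\eta}x_{k+1}\|^2$, supplying a genuinely negative contribution. Choosing the distinguished value $s=\beta$ (mirroring $s=\beta$ in the continuous proof) and using $\alpha\ge\beta+1$, $\beta>1$ to make the coefficients of $\|x_{k+1}-x_k\|^2$ and $\|x_k-x_{k-1}\|^2$ nonpositive after a Young's-inequality balancing, I expect to reach
$$E_{\beta,k+1}\le E_{\beta,k}-c_1 k\|x_{k+1}-x_k\|^2-c_2 k\|\mathcal{A}_{\eta}x_k\|^2+b_k,\quad c_1,c_2>0,\ \textstyle\sum_k b_k<+\infty.$$
Since $E_{\beta,k}\ge 0$, Lemma \ref{des} gives convergence of $E_{\beta,k}$, and telescoping the negative terms yields the four summability statements of $(i)$. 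For $s\neq\beta$ the same increment reads $E_{s,k+1}\le E_{s,k}+|s-\beta|\,k|\langle\mathcal{A}_{\eta}x_k,x_{k+1}-x_k\rangle|$ up to summable remainders; by Cauchy--Schwarz and $(i)$ this perturbation is summable, so Lemma \ref{des} again provides $\lim_k E_{s,k}$ for every admissible $s$.

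The remaining items follow the continuous template. Taking $s=0$ gives convergence of $E_{0,k}=\tfrac12 k^2\|x_k-x_{k-1}\|^2$, which with $\sum_k k\|x_{k+1}-x_k\|^2<+\infty$ forces $k\|x_k-x_{k-1}\|\to 0$; feeding this into the cocoercivity estimate of $\mathcal{A}_{\eta}$ yields $k\|\mathcal{A}_{\eta}x_k\|\to 0$, proving $(ii)$. For $(iii)$ I would form $E_{s_1,k}-E_{s_2,k}=(s_1-s_2)\kappa_k$ with $\kappa_k=k\langle x_k-x_{k-1}+\mathcal{A}_{\eta}x_k,\,x_k-x^*\rangle+\tfrac{\alpha-1}{2}\|x_k-x^*\|^2$, which therefore has a limit; setting $q_k:=\tfrac12\|x_k-x^*\|^2+\sum_{j\le k}\langle x_j-x^*,\mathcal{A}_{\eta}x_j\rangle$ and using that $\sum_j\langle x_j-x^*,\mathcal{A}_{\eta}x_j\rangle$ converges (the discrete counterpart of \eqref{euql}) reduces $\kappa_k$ to a discrete first-order relation $k(q_k-q_{k-1})+(\alpha-1)q_k$ up to terms tending to zero, whence a discrete analogue of Lemma \ref{x-x} delivers $\lim_k\|x_k-x^*\|^2$. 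Finally $(iv)$ is the discrete Opial Lemma \ref{op}: $(iii)$ gives the first hypothesis, and for any subsequence $x_{k_n}\rightharpoonup\hat x$ the inclusion $\mathcal{A}_{\eta}x_{k_n}\in\mathcal{A}(x_{k_n}-\eta\mathcal{A}_{\eta}x_{k_n})$ with $\mathcal{A}_{\eta}x_{k_n}\to 0$ from $(ii)$ and the sequential closedness of Proposition \ref{maximal} give $0\in\mathcal{A}\hat x$.

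The main obstacle is the increment inequality itself. Unlike the continuous computation \eqref{Ly}, where $-\langle\dot x,\dot\mu\rangle\le 0$ emerges cleanly from Proposition \ref{der}, the discrete expansion leaves genuine error terms --- the $\|x_{k+1}-x_k\|^2$, $\|x_k-x_{k-1}\|^2$ and $\|\mathcal{A}_{\eta}x_{k+1}-(1-\tfrac{\beta}{k})\mathcal{A}_{\eta}x_k\|^2$ contributions collected in Lemmas \ref{equa} and \ref{AA}, whose coefficients depend on $k,s,\alpha,\beta$. The delicate verification is that, for $\alpha\ge\beta+1$, $\beta>1$ and $k$ large, a single split of the cross term makes the net coefficients of these quadratics nonpositive, or at worst absolutely summable, so the descent structure survives discretization; a secondary technical point is establishing the discrete version of Lemma \ref{x-x} used in $(iii)$.
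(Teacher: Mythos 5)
Your overall architecture---the discrete energy $\varepsilon_s(k)$ for $s\in[0,\alpha-1]$, the dichotomy $s=\beta$ versus $s\neq\beta$, Lemma~\ref{des} to get convergence of the energies, $s=0$ to force $k\|x_k-x_{k-1}\|\to0$, and Opial's Lemma plus Proposition~\ref{maximal} for item $(iv)$---is exactly the paper's. However, there is one genuine gap: every claim involving $\mathcal{A}_{\eta}x_k$ with a factor of $k$ is unsupported by your plan. The $s=\beta$ descent does \emph{not} produce a term $-c_2\,k\|\mathcal{A}_{\eta}x_k\|^2$: after the telescoping, the only leftover involving the operator values is $s(1-\beta)\langle\mathcal{A}_{\eta}x_k,x_k-x^*\rangle$, whose coefficient is independent of $k$, so cocoercivity only yields $\sum\|\mathcal{A}_{\eta}x_k\|^2<+\infty$ (no factor $k$). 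Likewise, ``feeding $k\|x_k-x_{k-1}\|\to0$ into the cocoercivity estimate'' cannot give $k\|\mathcal{A}_{\eta}x_k\|\to0$, because cocoercivity bounds $\|\mathcal{A}_{\eta}x_k\|$ only by a multiple of $\|x_k-x^*\|$, which does not decay. The paper closes both holes with a second Lyapunov quantity that you never introduce, namely $k^2\|x_k-x_{k-1}+\mathcal{A}_{\eta}x_k\|^2$, the discrete counterpart of $t^2\|\dot x(t)+\mu(t)\|^2$ from the continuous proof: using $(\ref{al})$ and a Young inequality with $\xi\in(0,\tfrac{2(\beta-1)}{\alpha-\beta})$, its increments are controlled by summable multiples of $\|x_k-x_{k-1}\|^2$ (see $(\ref{kehele})$), whence $\sum k\|x_k-x_{k-1}+\mathcal{A}_{\eta}x_k\|^2<+\infty$ and $k\|x_k-x_{k-1}+\mathcal{A}_{\eta}x_k\|\to0$; combined with $\sum k\|x_{k+1}-x_k\|^2<+\infty$ and $k\|x_k-x_{k-1}\|\to0$, the triangle inequality then delivers both $\sum k\|\mathcal{A}_{\eta}x_k\|^2<+\infty$ in $(i)$ and $k\|\mathcal{A}_{\eta}x_k\|\to0$ in $(ii)$. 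Without this auxiliary estimate those two conclusions do not follow from your argument.

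A smaller remark on $(iii)$: your detour through $q_k$ and an unproved discrete analogue of Lemma~\ref{x-x} is unnecessary. Once $(ii)$ is in hand, every term of $\varepsilon_{\beta}(k)$ other than $\tfrac{\beta(\alpha-1)}{2}\|x_k-x^*\|^2$ tends to zero (the cross terms are bounded by $k\|x_k-x_{k-1}\|\,\|x_k-x^*\|$ and $k\|\mathcal{A}_{\eta}x_k\|\,\|x_k-x^*\|$ with $\|x_k-x^*\|$ bounded), so the existence of $\lim_k\varepsilon_{\beta}(k)$ immediately gives the existence of $\lim_k\|x_k-x^*\|^2$; this is how the paper argues, and it avoids importing any new lemma.
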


\begin{proof}
Take $x^*\in \mbox{zer}\mathcal{A}$. For $0 \leq s \leq \alpha-1 $, define the following function: 
\begin{eqnarray*}
\varepsilon_s(k)&:=&\frac{1}{2}\|s(x^*-x_k)-(k-\alpha)(x_k-x_{k-1})\|^2+\frac{s(\alpha-s-1)}{2}\|x_k-x^*\|^2\nonumber\\
&&+s(k-1)\langle \mathcal{A}_{\eta}x_k, x_k-x^* \rangle.
\end{eqnarray*}
The definiton of $\varepsilon_s(k)$ ensures that
\begin{eqnarray*}
&&\varepsilon_s(k+1)-\varepsilon_s(k)\\
&=&\frac{1}{2}\|s(x^*-x_{k+1})-(k+1-\alpha)(x_{k+1}-x_k)\|^2\\
&&-\frac{1}{2}\|s(x^*-x_k)-(k-\alpha)(x_k-x_{k-1})\|^2\\
&&+\frac{s(\alpha-s-1)}{2}\|x_{k+1}-x^*\|^2-\frac{s(\alpha-s-1)}{2}\|x_k-x^*\|^2\\
&&+sk\langle\mathcal{A}_{\eta}x_{k+1}, x_{k+1}-x^*\rangle-s(k-1)\langle \mathcal{A}_{\eta}x_k, x_k-x^* \rangle\\
&=&\frac{1}{2}s^2\|x^*-x_{k+1}\|^2+\frac{1}{2}(k+1-\alpha)^2\|x_{k+1}-x_k\|^2\\
&&-s(k+1-\alpha)\langle  x^*-x_{k+1},x_{k+1}-x_k\rangle\\
&&-\frac{1}{2}s^2\|x^*-x_k\|^2-\frac{1}{2}(k-\alpha)^2\|x_k-x_{k-1}\|^2+s(k-\alpha)\langle  x^*-x_k,x_k-x_{k-1}\rangle\\
&&+\frac{1}{2}s(\alpha-1)\|x_{k+1}-x^*\|^2-\frac{1}{2}s^2\|x_{k+1}-x^*\|^2-\frac{s(\alpha-1)}{2}\|x^*-x_k\|^2\\
&&+\frac{s^2}{2}\|x_k-x^*\|^2+sk\langle \mathcal{A}_{\eta}x_{k+1}, x_{k+1}-x^*\rangle-s(k-1)\langle \mathcal{A}_{\eta}x_k, x_k-x^* \rangle.\\
& \overset{\text{\mbox{Lemma}~\ref{equa}}}    =&\frac{1}{2}(k+1-\alpha)^2\|x_{k+1}-x_k\|^2-\frac{1}{2}(k-\alpha)^2\|x_k-x_{k-1}\|^2\\
&&+s(k-\alpha)\langle x_{k+1}-x_k, x_k-x_{k-1}\rangle\\
&&+sk\langle \mathcal{A}_{\eta}x_{k+1}- (1-\frac{\beta}{k})\mathcal{A}_{\eta}x_k,x^*-x_{k+1}\rangle-\frac{s(\alpha-1)}{2}\|x_{k+1}-x_k\|^2\\
&&+sk\langle \mathcal{A}_{\eta}x_{k+1}, x_{k+1}-x^*\rangle-s(k-1)\langle \mathcal{A}_{\eta}x_k, x_k-x^* \rangle.
\end{eqnarray*}
Then by Lemma $\ref{AA}$, we find
\begin{eqnarray*}
&&\varepsilon_{s}(k+1)-\varepsilon_s(k)\\
&=&(\frac{1}{2}k^2-sk)\|x_{k+1}-x_k\|^2-\frac{1}{2}(k-\alpha)^2\|x_k-x_{k-1}\|^2\\
&&+s(k-\alpha)\langle x_{k+1}-x_k,x_k-x_{k-1}\rangle+[\frac{1}{2}(k+1-\alpha)^2-\frac{1}{2}k^2+sk]\|x_{k+1}-x_k\|^2\\
&&+sk\langle \mathcal{A}_{\eta}x_{k+1}-(1-\frac{\beta}{k})\mathcal{A}_{\eta}x_k,x^*-x_{k+1}\rangle-\frac{1}{2}s(\alpha-1)\|x_{k+1}-x_k\|^2\\
&&+sk\langle  \mathcal{A}_{\eta}x_{k+1},x_{k+1}-x^*\rangle-s(k-1)\langle \mathcal{A}_{\eta}x_k,x_k-x^*\rangle\\
&=&-(k^2-sk)\langle  \mathcal{A}_{\eta}x_{k+1}-(1-\frac{\beta}{k})\mathcal{A}_{\eta}x_k,x_{k+1}-x_k\rangle\\
&&-\frac{1}{2}k^2\|\mathcal{A}_{\eta}x_{k+1}-(1-\frac{\beta}{k})\mathcal{A}_{\eta}x_k\|^2\\
&&+sk\langle \mathcal{A}_{\eta}x_{k+1}-(1-\frac{\beta}{k})\mathcal{A}_{\eta}x_k,x^*-x_{k+1}\rangle-\frac{1}{2}s(\alpha-1)\|x_{k+1}-x_k\|^2\\
&&+sk\langle \mathcal{A}_{\eta}x_{k+1}, x_{k+1}-x^*\rangle- s(k-1)\langle \mathcal{A}_{\eta}x_k, x_k-x^*\rangle\\
&&+[(1+s  -\alpha)k+\frac{1}{2}(1-\alpha)^2]\|x_{k+1}-x_k\|^2.
\end{eqnarray*}
Obsever that
\begin{eqnarray*}
&&-(k-s)k\langle \mathcal{A}_{\eta}x_{k+1}-(1-\frac{\beta}{k})\mathcal{A}_{\eta}x_k ,x_{k+1}-x_k\rangle\\
&&-sk\langle \mathcal{A}_{\eta}x_{k+1}- (1-\frac{\beta}{k})\mathcal{A}_{\eta}x_k,x_{k+1}-x^*\rangle\\
&=&-(k-s)k\langle \mathcal{A}_{\eta}x_{k+1}-\mathcal{A}_{\eta}x_k,x_{k+1}-x_k \rangle-(k-s)k\langle \frac{\beta}{k}\mathcal{A}_{\eta}x_k,x_{k+1}-x_k\rangle\\
&&-sk\langle \mathcal{A}_{\eta}x_{k+1},x_{k+1}-x^*\rangle+s(k-\beta)\langle \mathcal{A}_{\eta}x_k,x_{k+1}-x_k+x_k-x^*\rangle	\\
&=&-(k-s)k\langle \mathcal{A}_{\eta}x_{k+1}-\mathcal{A}_{\eta}x_k,x_{k+1}-x_k\rangle-(k-s)\beta\langle \mathcal{A}_{\eta}x_k,x_{k+1}-x_k\rangle\\
&&-sk\langle \mathcal{A}_{\eta}x_{k+1},x_{k+1}-x^*\rangle +s(k-\beta)\langle \mathcal{A}_{\eta}x_k,x_{k+1}-x_k\rangle\\
&&+s(k-\beta)\langle \mathcal{A}_{\eta}x_k,x_k-x^*\rangle\\
&=&-(k-s)k\langle \mathcal{A}_{\eta}x_{k+1}-\mathcal{A}_{\eta}x_k,x_{k+1}-x_k\rangle+(s-\beta)k\langle \mathcal{A}_{\eta}x_k, x_{k+1}-x_k\rangle\\
&&-sk\langle\mathcal{A}_{\eta}x_{k+1},x_{k+1}-x^*\rangle+s(k-\beta)\langle\mathcal{A}_{\eta}x_k,x_k-x^*\rangle.
\end{eqnarray*}
Combining these two results, we establish that
\begin{eqnarray}\label{e}
&&\varepsilon_{s}(k+1)-\varepsilon_s(k)\nonumber\\
&=&-(k-s)k\langle\mathcal{A}_{\eta}x_{k+1}-\mathcal{A}_{\eta}x_k, x_{k+1}-x_k \rangle-sk\langle\mathcal{A}_{\eta}x_{k+1},x_{k+1}-x^* \rangle\nonumber\\
&&+s(k-\beta)\langle\mathcal{A}_{\eta}x_k,x_k-x^* \rangle+(s-\beta)k\langle  \mathcal{A}_{\eta}x_k,x_{k+1}-x_k\rangle\nonumber\\
&&+sk\langle\mathcal{A}_{\eta}x_{k+1},x_{k+1}-x^*\rangle-s(k-1)\langle\mathcal{A}_{\eta}x_k,x_k-x^*\rangle\nonumber\\
&&-\frac{1}{2}k^2\|\mathcal{A}_{\eta}x_{k+1}-(1-\frac{\beta}{k})\mathcal{A}_{\eta}x_k\|^2-\frac{1}{2}s(\alpha-1)\|x_{k+1}-x_k\|^2\nonumber\\
&&+[(1+s-\alpha)k+\frac{1}{2}(1-\alpha)^2]\|x_{k+1}-x_k\|^2\nonumber\\
&=&-(k-s)k\langle\mathcal{A}_{\eta}x_{k+1}-\mathcal{A}_{\eta}x_k, x_{k+1}-x_k \rangle-\frac{1}{2}k^2\|\mathcal{A}_{\eta}x_{k+1}-(1-\frac{\beta}{k})\mathcal{A}_{\eta}x_k\|^2\nonumber\\
&&+s(1-\beta)\langle \mathcal{A}_{\eta}x_k,x_k-x^*\rangle+(s-\beta)k\langle \mathcal{A}_{\eta}x_k,x_{k+1}-x_k\rangle\nonumber\\
&&+[(1+s-\alpha)k+\frac{1}{2}\alpha(\alpha-1)]\|x_{k+1}-x_k\|^2.
\end{eqnarray}

Take $s=\beta$.  According to Proposition $\ref{z}$-$(iv)$, the above equality yields
\begin{eqnarray*}
&&\varepsilon_{\beta}(k+1)-\varepsilon_{\beta}(k)\\
&=&-(k-\beta)k\langle \mathcal{A}_{\eta}x_{k+1}-\mathcal{A}_{\eta}x_k, x_{k+1}-x_k\rangle+\beta(1-\beta)\langle \mathcal{A}_{\eta}x_k,x_k-x^*\rangle\\
&&-\frac{1}{2}k^2\|\mathcal{A}_{\eta}x_{k+1}-(1-\frac{\beta}{k})\mathcal{A}_{\eta}x_k\|^2+[(1+\beta-\alpha)k+\frac{1}{2}\alpha(\alpha-1)]\|x_{k+1}-x_k\|^2\\
&\leq& -(\rho+\eta)(k-\beta)k\| \mathcal{A}_{\eta}x_{k+1}-\mathcal{A}_{\eta}x_k\|^2+\beta(1-\beta)\| \mathcal{A}_{\eta}x_k\|^2\\
&&-\frac{1}{2}k^2\|\mathcal{A}_{\eta}x_{k+1}-(1-\frac{\beta}{k})\mathcal{A}_{\eta}x_k\|^2+[(1+\beta-\alpha)k+\frac{1}{2}\alpha(\alpha-1)]\|x_{k+1}-x_k\|^2.
\end{eqnarray*}
Together with $\beta >1$ and $0\leq s\leq \alpha-1$, it follows immediately that the nonnegative sequence $\varepsilon_{\beta}(k)$ is nonincreasing. Hence, $\varepsilon_{\beta}(k)$ is convergent and bounded. Furthermore, by adding the above inequality from $k=1$ to $k=+\infty$, we obtain 
\begin{equation}\label{1}
\sum (k-\beta)k\| \mathcal{A}_{\eta}x_{k+1}-\mathcal{A}_{\eta}x_k\|^2<+\infty,
\end{equation}
\begin{equation}\label{2}
\sum \|\mathcal{A}_{\eta}x_k\|^2<+\infty,	
\end{equation}
\begin{equation}\label{3}
\sum k^2\|\mathcal{A}_{\eta}x_{k+1}-(1-\frac{\beta}{k})\mathcal{A}_{\eta}x_k\|^2<+\infty,	
\end{equation}
\begin{equation}\label{4}
\sum k\|x_{k+1}-x_k\|^2<+\infty.	
\end{equation}

Take $s\neq\beta$. Using $(\ref{al})$, it ensues that
\begin{eqnarray*}
&&\|x_{k+1}-x_k-(x_k-x_{k-1})\|^2\\
&=&\|x_{k+1}-x_k-(1-\frac{\alpha}{k})(x_k-x_{k-1})-\frac{\alpha}{k}(x_k-x_{k-1})\|^2\\
&=&\|\mathcal{A}_{\eta}x_{k+1}-(1-\frac{\beta}{k})\mathcal{A}_{\eta}x_k-\frac{\alpha}{k}(x_k-x_{k-1})\|^2\\
&\leq&2\|\mathcal{A}_{\eta}x_{k+1}-(1-\frac{\beta}{k})\mathcal{A}_{\eta}x_k\|^2+2\frac{{\alpha}^2}{k^2}\|(x_k-x_{k-1})\|^2.
\end{eqnarray*}
Multiplying both sides of by $k^2$, we obtain
$$k^2\|x_{k+1}-x_k-(x_k-x_{k-1})\|^2\leq 2k^2\|\mathcal{A}_{\eta}x_{k+1}-(1-\frac{\beta}{k})\mathcal{A}_{\eta}x_k\|^2+2{\alpha}^2\|x_k-x_{k-1}\|^2.$$
The inequality above, combined with $(\ref{3})$ and $(\ref{4})$ yields
$$\sum k^2\|x_{k+1}-x_k-(x_k-x_{k-1})\|^2<+\infty.$$
Taking $(\ref{al})$ into account, we establish that
\begin{eqnarray}\label{kehele}
&&k^2\|x_{k+1}-x_k+\mathcal{A}_{\eta}x_{k+1}\|^2-(k-1)^2\|x_k-x_{k-1}+\mathcal{A}_{\eta}x_k\|^2\nonumber\\
&=&k^2\|x_k-x_{k-1}+\mathcal{A}_{\eta}x_k-\frac{\alpha}{k}(x_k-x_{k-1})-\frac{\beta}{k}\mathcal{A}_{\eta}x_k\|^2\nonumber\\
&&-(k-1)^2\|x_k-x_{k-1}+\mathcal{A}_{\eta}x_k\|^2\nonumber\\
&=&k^2\|(1-\frac{\beta}{k})(x_k-x_{k-1}+\mathcal{A}_{\eta}x_k)+\frac{\beta-\alpha}{k}(x_k-x_{k-1})\|^2\nonumber\\
&&-(k-1)^2\|x_k-x_{k-1}+\mathcal{A}_{\eta}x_k\|^2\nonumber\\
&=&[(k-\beta)^2-(k-1)^2]\|x_k-x_{k-1}+\mathcal{A}_{\eta}x_k\|^2+(\beta-\alpha)^2\|x_k-x_{k-1}\|^2\nonumber\\
&&+2(k-\beta)(\beta-\alpha)\langle x_k-x_{k-1}+\mathcal{A}_{\eta}x_k,x_k-x_{k-1}\rangle\nonumber\\
&\leq&[(2-2\beta)k+{\beta}^2-1]\|x_k-x_{k-1}+\mathcal{A}_{\eta}x_k\|^2+(\beta-\alpha)^2\|x_k-x_{k-1}\|^2\nonumber\\
&&2(k-\beta)(\alpha-\beta)|\langle x_k-x_{k-1}+\mathcal{A}_{\eta}x_k,x_k-x_{k-1}\rangle|\nonumber\\
&\leq&[(2-2\beta)k+{\beta}^2-1]\|x_k-x_{k-1}+\mathcal{A}_{\eta}x_k\|^2+(\beta-\alpha)^2\|x_k-x_{k-1}\|^2\nonumber\\
&&+(k-\beta)(\alpha-\beta)\xi\|x_k-x_{k-1}+\mathcal{A}_{\eta}x_k\|^2+(k-\beta)(\alpha-\beta)\frac{1}{\xi}\|x_k-x_{k-1}\|^2\nonumber\\
&=&[(2-2\beta)k+{\beta}^2-1+(k-\beta)(\alpha-\beta)\xi]\|x_k-x_{k-1}+\mathcal{A}_{\eta}x_k\|^2\nonumber\\
&&+[(\beta-\alpha)^2+(k-\beta)|\alpha-\beta|\frac{1}{\xi}]\|x_k-x_{k-1}\|^2\nonumber\\
&\leq&[(2(1-\beta)+(\alpha-\beta)\xi)k+{\beta}^2-1]\|x_k-x_{k-1}+\mathcal{A}_{\eta}x_k\|^2\nonumber\\
&&+[(\beta-\alpha)^2+(k-\beta)(\alpha-\beta)\frac{1}{\xi}]\|x_k-x_{k-1}\|^2.
\end{eqnarray}
By assumption $\alpha\geq\beta+1$ and $\beta>1$, let us choose $\xi\in (0,\frac{2(\beta-1)}{\alpha-\beta})$, such that $2(1-\beta)+(\alpha-\beta)\xi<0$. By neglecting the nonpositive term $[(2(1-\beta)+(\alpha-\beta)\xi)k+{\beta}^2-1]\|x_k-x_{k-1}+\mathcal{A}_{\eta}x_k\|^2$, $(\ref{kehele})$ becomes $k^2\|x_{k+1}-x_k+\mathcal{A}_{\eta}x_{k+1}\|^2-(k-1)^2\|x_k-x_{k-1}+\mathcal{A}_{\eta}x_k\|^2\leq [(\beta-\alpha)^2+(k-\beta)(\alpha-\beta)\frac{1}{\xi}]\|x_k-x_{k-1}\|^2$. Now applying  Lemma $\ref{des}$ to this inequality, we establish that $\lim_{k\rightarrow +\infty}k^2\|x_k-x_{k-1}+\mathcal{A}_{\eta}x_k\|^2$ exists, because the right-hand side is summable by $(\ref{4})$. Furthermore, by adding the above inequality from $k=1$ to $k=+\infty$, we obtain  
\begin{equation}\label{5}
\sum k\|x_k-x_{k-1}+\mathcal{A}_{\eta}x_k\|^2<+\infty.	
\end{equation}
Indeed, the limit of $k^2\|x_k-x_{k-1}+\mathcal{A}_{\eta}x_k\|^2$ must be zero. Otherwise, if its limit is a non-zero constant, that would contradict $(\ref{5})$.\\
This guarantees  that
\begin{equation}\label{0}
\lim_{k\rightarrow +\infty}k^2\|x_k-x_{k-1}+\mathcal{A}_{\eta}x_k\|^2=0.	
\end{equation}
Note that
\begin{eqnarray*}
&&k\|\mathcal{A}_{\eta}x_k\|^2\\
&=&k\|x_k-x_{k-1}+\mathcal{A}_{\eta}x_k-(x_k-x_{k-1})\|^2\\
&\leq&2k\||x_k-x_{k-1}+\mathcal{A}_{\eta}x_k\|^2+2k\|x_k-x_{k-1}\|^2.
\end{eqnarray*}
Now use $(\ref{4})$ and $(\ref{5})$ to obtain
$$\sum k\|\mathcal{A}_{\eta}x_k\|^2<+\infty.$$
Combining the above result and $(\ref{4})$ with $k|\langle \mathcal{A}_{\eta}x_k, x_{k+1}-x_k\rangle|\leq \frac{k}{2}\| \mathcal{A}_{\eta}x_k\|^2+\frac{k}{2}\|x_{k+1}-x_k\|^2$, we find
\begin{eqnarray}\label{khe}
\sum k|\langle \mathcal{A}_{\eta}x_k,x_{k+1}-x_k\rangle|<+\infty.	
\end{eqnarray}
Here we have completed the proof of $(i)$.

Neglecting nonpositive terms of the right-hand side of $(\ref{e})$, we derive
\begin{eqnarray*}
&&\varepsilon_{s}(k+1)-\varepsilon_s(k)\\
&\leq &(s-\beta)k\langle \mathcal{A}_{\eta}x_k,x_{k+1}-x_k\rangle\\
&\leq& |s-\beta|k|\langle \mathcal{A}_{\eta}x_k,x_{k+1}-x_k\rangle|.	
\end{eqnarray*} 
$$$$
Connecting with $(\ref{khe})$ and applying Lemma $\ref{des}$ with $a_k=\varepsilon_s(k)$ and $b_k=k|\langle \mathcal{A}_{\eta}x_k,x_{k+1}-x_k\rangle|$, we know $\lim_{k\rightarrow +\infty}\varepsilon_s(k)$ exists.

Collecting these results, we deduce that $\forall 0\leq s\leq \alpha-1 $, $\lim_{k\rightarrow +\infty}\varepsilon_s(k)$ exists.
Take $s=0$, $\lim_{k\rightarrow +\infty}\varepsilon_k(0)=\lim_{k\rightarrow +\infty}\frac{1}{2}(k-\alpha)^2\|x_k-x_{k-1}\|^2$ exists.
Indeed, the limit of $(k-\alpha)^2\|x_k-x_{k-1}\|^2$ must be zero. Otherwise, if its limit is a non-zero constant, it would contradict $(\ref{4})$.
This guarantees that
\begin{equation}\label{eq0}
\lim_{k\rightarrow +\infty}\frac{1}{2}(k-\alpha)^2\|x_k-x_{k-1}\|^2=0.	
\end{equation}
Furthermore, we can improve the estimate of $\|\mathcal{A}_{\eta}x_k\|^2$.
Note that
\begin{eqnarray*}
&&(k-\alpha)^2\| \mathcal{A}_{\eta}x_k\|^2\\
&=&(k-\alpha)^2\| \mathcal{A}_{\eta}x_k+x_k-x_{k-1}-(x_k-x_{k-1})\|^2\\
&\leq&2(k-\alpha)^2\| \mathcal{A}_{\eta}x_k+x_k-x_{k-1}\|^2+2(k-\alpha)^2\|x_k-x_{k-1}\|^2.
\end{eqnarray*}
This together with $(\ref{0})$ and $(\ref{eq0})$ yields
$$\lim_{k\rightarrow +\infty}(k-\alpha)^2\|\mathcal{A}_{\eta}x_k\|^2=0,$$
which completes the proof of $(ii)$.

Combining this result with the boundedness of $\|x_k-x^*\|$, we have
$$\lim_{k\rightarrow +\infty}(k-1)\langle \mathcal{A}_{\eta}x_k,x_k-x^*\rangle=0,$$
because of $(k-1)\langle \mathcal{A}_{\eta}x_k,x_k-x^*\rangle \leq (k-1)\|\mathcal{A}_{\eta}x_k\|\|x_k-x^*\|.$
By virtue of the definition of $\varepsilon_{\beta}(k)$, we get
\begin{eqnarray*}
&&\varepsilon_{\beta}(k)\\
&=&\frac{1}{2}\|\beta(x^*-x_k)-(k-\alpha)(x_k-x_{k-1})\|^2\\
&&+\beta(k-1)\langle \mathcal{A}_{\eta}x_k,x_k-x^*\rangle+\frac{\beta(\alpha-\beta-1)}{2}\|x_k-x^*\|^2\\
&=&\frac{\beta(\alpha-1)}{2}\|x^*-x_k\|^2-\beta(k-\alpha)\langle x^*-x_k,x_k-x_{k-1}\rangle \\
&&+\frac{1}{2}(k-\alpha)^2\|x_k-x_{k-1}\|^2+\beta(k-1)\langle \mathcal{A}_{\eta}x_k,x_k-x^*\rangle.
\end{eqnarray*}
In order to prove the existence of $\|x_k-x^*\|^2$, we just have to prove that the limit of $(k-\alpha)\langle x^*-x_k, x_k-x_{k-1}\rangle$ is zero.
According to Cauchy-Schwarz inequality, it is easy to see that
\begin{eqnarray*}
&&-\beta(k-\alpha)\|x^*-x_k\|\|x_k-x_{k-1}\|\leq-\beta(k-\alpha)|\langle x^*-x_k, x_k-x_{k-1}\rangle| \\
&\leq&-\beta(k-\alpha)\langle x^*-x_k, x_k-x_{k-1}\rangle\leq \beta(k-\alpha)\|x^*-x_k\|\|x_k-x_{k-1}\|.	
\end{eqnarray*}
Combining the boundedness of $\|x^*-x_k\|$ and $(\ref{eq0})$, we get
$$\lim_{k\rightarrow +\infty}-\beta(k-\alpha)\langle x^*-x_k, x_k-x_{k-1}\rangle=0.$$
Hence,$\lim_{k\rightarrow +\infty}\varepsilon_{\beta}(k)=\lim_{k\rightarrow +\infty}\frac{1}{2}(\alpha-1)\beta\|x_k-x^*\|^2$ exists, which completes the proof of $(iii)$
Let $\hat{x}$ be a weak cluster point of $\{x_k\}$, namely there exists a subsequence $\{x_{k_n}\}$ that weakly converges to $\hat{x}$. We have $\mathcal{A}_{\eta}x_{k_n}\rightarrow 0$. Passing to the limit to
$$\mathcal{A}_{\eta} x_{k_n}\in \mathcal{A}(x_{k_n}-\eta\mathcal{A}_{\eta} x_{k_n})$$ 
and using Proposition 4, we obtain 
$$0\in \mathcal{A}(\hat{x}),$$
which completes the proof.
\end{proof}

\begin{remark}
 Our algorithm \eqref{ed} is similar to  CRIPA-S algorithm $(\ref{CRIPAS})$, which is proposed by  Maing$\acute{e}$ \cite{Mainge} for solving a maximal  monotone inclusion problem. Theorem \ref{kehe} has shown that our algorithm \eqref{ed} enjoys convergence properties similar to  CRIPA-S algorithm $(\ref{CRIPAS})$. It is worth mentioning that although similar to  CRIPA-S algorithm $(\ref{CRIPAS})$, our algorithm  \eqref{ed} is obtained by discretization of the dynamical system $(\ref{DS})$ and can solve the maximal comonotone inclusion problem. 
\end{remark}

\section{Numerical experiments}

In this section, we illustrate the validity of the proposed dynamical system $(\ref{DS})$ as well as the resulting  algorithm $(\ref{ed})$ for solving the inclusion problem $(\ref{problem})$ by two examples. The simulations are conducted in Matlab (version 9.4.0.813654)R2018a. All the numerical procedures are performed on a personal computer with Inter(R) Core(TM) i7-4600U, CORES 2.69GHz and RAM 8.00GB.

The first example (Example \ref{Ex1}) is taken from \cite{AttouchSC},  by which  Attouch et. al. \cite{AttouchSC} test the dynamical system $(\ref{sys6})$ for the maximal monotone inclusion problem. Next, we will test our dynamical system \eqref{DS} and the resulting algorithm \eqref{ed}  in Example \ref{Ex1}, and make comparisons with  $(\ref{sys6})$ and  CRIPA-S algorithm $(\ref{CRIPAS})$, respectively.

\begin{example}\label{Ex1}
Let $\mathcal{A}: \mathbb{R}^2\rightarrow \mathbb{R}^2$ and $\mathcal{A}(x,y)=(-y,x)$, which is a linear skew symmetric operator. Clearly, $\mathcal{A}$ is a maximally monotone whose single zero is $x^*=(0,0)$. Further, $\mathcal{A}$ and its Yosida regularization $\mathcal{A}_{\eta}$ can be identified respectively with the matrices
$\mathcal{A}=
\begin{pmatrix}
0 & -1\\
1 & 0
\end{pmatrix}$,
 $\mathcal{A}_{\eta}=
\begin{pmatrix}
\frac{\eta}{{\eta}^2+1} & -\frac{1}{{\eta}^2+1}\\
\frac{1}{{\eta}^2+1} & \frac{\eta}{{\eta}^2+1}
\end{pmatrix}$.
\end{example}

Take $b=1$, $\lambda(t)=\lambda t^2$ and $e(t)\equiv 0$ in the system $(\ref{sys6})$.  Figure \ref{figure1} depicts the asymptotical behavior of the  system $(\ref{sys6})$  for different $\lambda$  and the system $(\ref{DS})$ for different $\eta$, respectively, with the same parameters $\alpha=2.5$, $t_0=0.1$ and $x(t_0)=\dot{x}(t_0)=(1,1)$, where $\beta=1.5$ in the system $(\ref{DS})$. As shown in Figure \ref{figure1}, the dynamical system \eqref{DS}outperform the dynamical system \eqref{sys6}.

Next,  by this example, we compare our algorithm  \eqref{ed}  with  the CRIPA-S algorithm $(\ref{CRIPAS})$ proposed by  Maing$\acute{e}$ \cite{Mainge}.   Figure \ref{figure2}  displays  the profiles of $\|x_{k}-x^*\|$ for the sequences $\{x_k\}$ generated by the algorithm \eqref{ed} for different   $\eta$ and the CRIPA-S algorithm  \eqref{CRIPAS}) for different $\lambda$, respectively, under the same stopping criteria $\|x_k-x^*\|\leq 10^{-7}$. The profile obtained for CRIPA-S algorithm $(\ref{CRIPAS})$ with the parameters $b=1$, $a_1=5.25$, $a_2=2.5$, $\bar{c}=7.875$ and starting points $x_0=x_{-1}=z_{-1}=(1,-1)$. In order to ensure $\lambda(1+k_0)=\eta+1$, we take $k_0=\frac{\eta+1}{\lambda}-1$. In the algorithm  \eqref{ed}, we take $\alpha=a_1$, $\beta=a_2$ and starting points $x_1=x_0=(1,-1)$. Figure \ref{figure2} shows that our algorithm $(\ref{ed})$ outperforms the CRIPA-S algorithm $(\ref{CRIPAS})$ in \cite{Mainge}.

In the following example, we test the dynamical system $(\ref{DS})$ and  the  algorithm $(\ref{ed})$ for solving the  maximal comonotone inclusion problem. 

\begin{example}\label{Ex2} Consider the following inclusion problem
$$0\in \mathcal{A}(x^*),$$
 where $\mathcal{A}=
\begin{pmatrix}
-\frac{2}{5} & \frac{4}{5}\\
-\frac{4}{5} & -\frac{2}{5}
\end{pmatrix}$
is a  maximal $\rho$-comonotone operator with $\rho= -\frac{1}{2}$. It is easily verified that $\mathcal{A}$ has a single zero $x^*=(0,0)$.	
\end{example}
The dynamical system $(\ref{sys6})$ proposed by  Attouch et. al. \cite{AttouchSC}  and CRIPA-S algorithm $(\ref{CRIPAS})$  proposed by  Maing$\acute{e}$ \cite{Mainge} are not applicable for this example because the operator $ \mathcal{A}$ is not monotone. We first use the dynamical system $(\ref{DS})$ to solve Example \ref{Ex2}.   Figure \ref{figure3} depicts the asymptotical behavior of the trajectort generated by $\eqref{DS}$ where it is solved with the ode45 function in Matlab on the interval $[0.1,100]$, with parameters $\alpha=3$, $\beta=2$, $\eta=2$ and $x(t_0)=\dot{x}(t_0)=(1,1)$. Figure \ref{figure4}  depicts the asymptotical behavior of the trajectory $x(t)$ generated by  $(\ref{DS})$ with different $\beta$ where it is solved  with the ode45 function in Matlab on the interval $[0.1,50]$ with parameters $\alpha=20$, $\eta=2$ and $x(t_0)=\dot{x}(t_0)=(1,1)$.

Now we use the algorithm $(\ref{al})$ to solve Example \ref{Ex2}.   Figure \ref{figure5} displays  the profiles of $\|x_{k}-x^*\|$ for the sequences $\{x_k\}$ generated by the  algorithm $(\ref{al})$ with different $\beta$ under the same stopping criteria $\|x_k-x^*\|\leq 10^{-7}$, $\alpha=10$, $\eta=2$ and $x_1=x_0=(1,1)$.

\begin{figure}[htp]
\begin{center}
\includegraphics[width=0.6\textwidth]{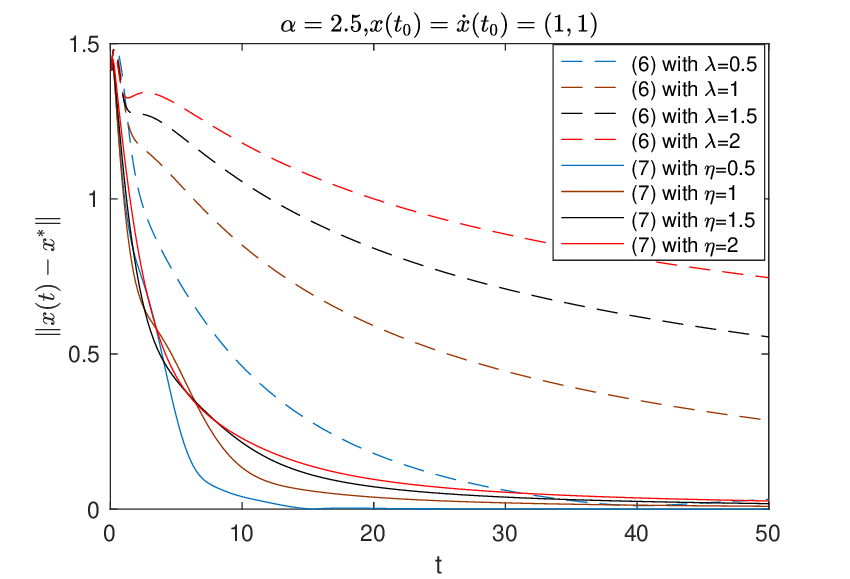}
\caption{Comparison of iteration error $\|x(t)-x^*\|$ for systems $(\ref{sys6})$ and $(\ref{DS})$  in Example $\ref{Ex1}$.}
\label{figure1}
\end{center}
\end{figure}

\begin{figure}[htp]
\begin{center}
\includegraphics[width=0.6\textwidth]{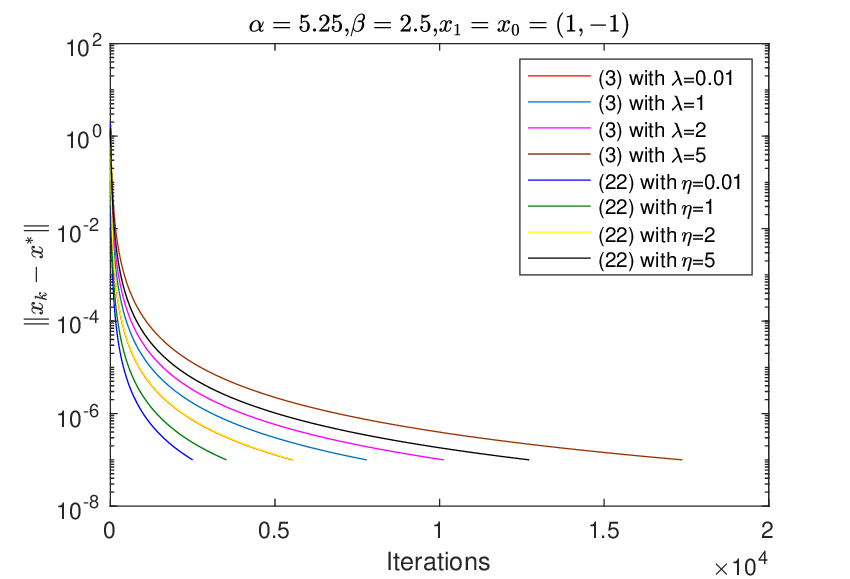}
\caption{Comparison of the iteration error $\|x_k-x^*\|$ for algorithms $(\ref{CRIPAS})$ and $(\ref{ed})$ in Example $\ref{Ex1}$.}
\label{figure2}
\end{center}
\end{figure}

\begin{figure}[htp]
\begin{center}
\includegraphics[width=0.6\textwidth]{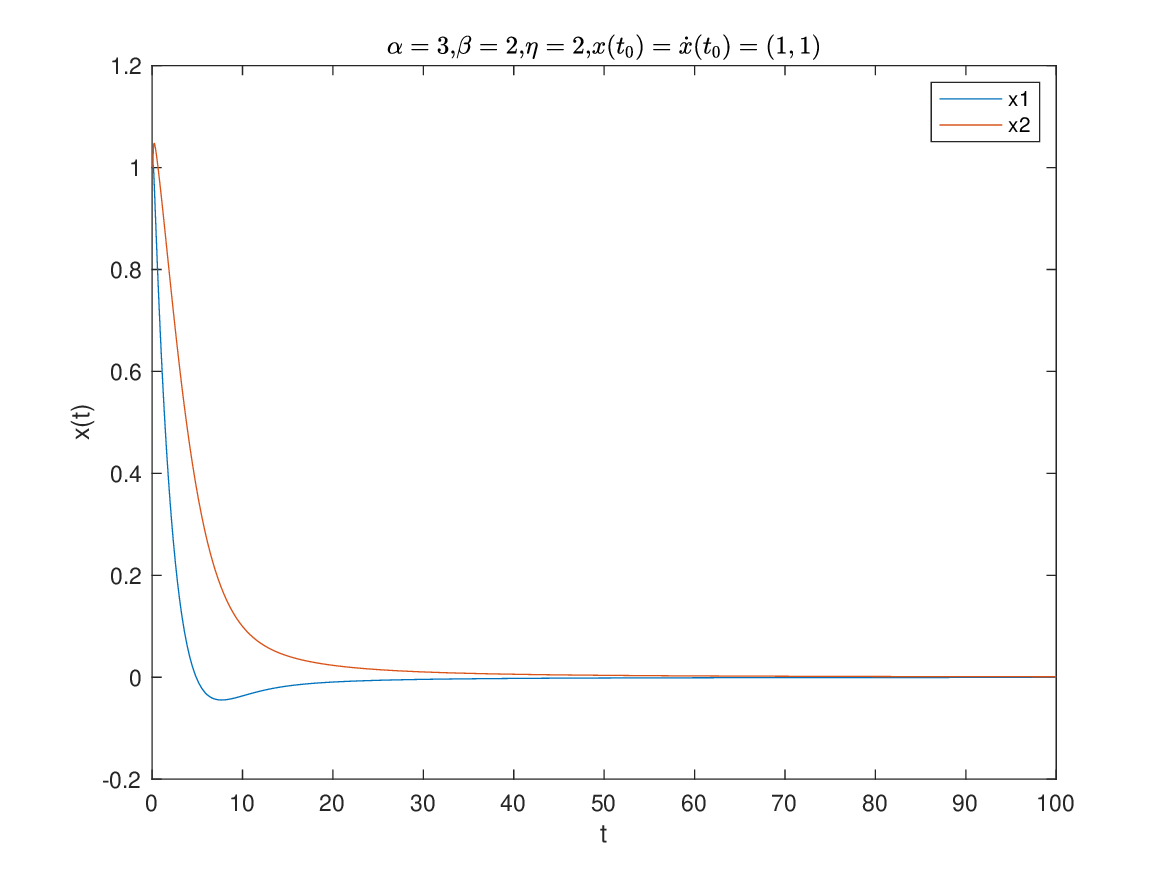}
\caption{Transient behavior of dynamical system $(\ref{DS})$  in Example $\ref{Ex2}$.}
\label{figure3}
\end{center}
\end{figure}

\begin{figure}[htp]
\begin{center}
\includegraphics[width=0.6\textwidth]{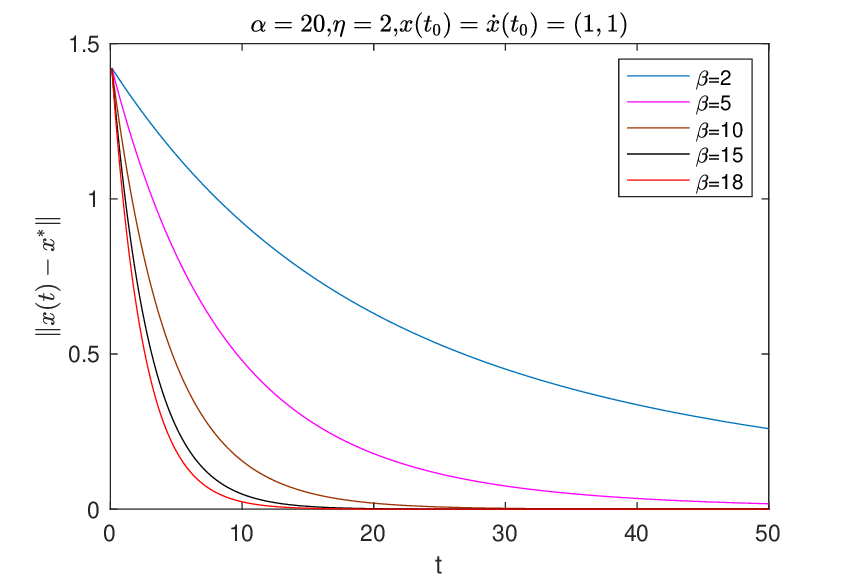}
\caption{Iteration error $\|x(t)-x^*\|$ for dynamical systems $(\ref{DS})$  with  different $\beta$  in Example $\ref{Ex2}$.}
\label{figure4}
\end{center}
\end{figure}

\begin{figure}[htp]
\begin{center}
\includegraphics[width=0.6\textwidth]{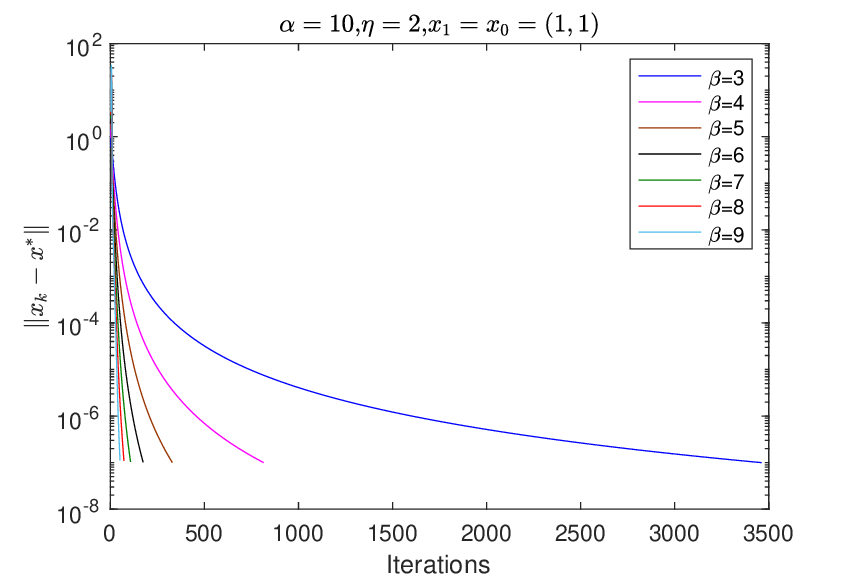}
\caption{Iteration error $\|x_k-x^*\|$ for algorithm $(\ref{ed})$  with  different $\beta$ in Example $\ref{Ex2}$.}
\label{figure5}
\end{center}
\end{figure}


\section{Concluding remarks}

In this paper we propose a second order dynamical system for solving a maximal comonotone inclusion problem and  prove convergence properties of the trajectories generated by the dynamical system under mild conditions. These convergence properties are similar to the ones of a second order dynmaical system considered by  Attouch et. al. \cite{AttouchSC}  for a maximal monotone inclusion problem.  By disretizing our dynmaical system, we propose an algorithm  which combines the relaxation factor, a momentum term and a correction term for solvig the maximal comonotone inclusion problem. Our algorithm is  similar to CRIPA-S algorithm $(\ref{CRIPAS})$ proposed by  Maing$\acute{e}$ \cite{Mainge} for the maximal monotone inclusion problem. By  numerical experiment, we demonstrate the validity of the proposed dynamical system and its resulting algorithm.

\section*{Disclosure statement}

No potential conflict of interest was reported by the authors.






\end{document}